\setlist[itemize]{leftmargin=4ex}
\theoremstyle{plain}
\newtheorem{theorem}{Theorem}
\newtheorem{corollary}[theorem]{Corollary}
\newtheorem{lemma}[theorem]{Lemma}
\newtheorem{proposition}[theorem]{Proposition}
\theoremstyle{remark}
\newtheorem{remark}[theorem]{Remark}
\numberwithin{equation}{section}
\numberwithin{theorem}{section}
\DeclareMathOperator{\diam}{diam}                              
\newcommand{\N}{\mathbb{N}}
\newcommand{\R}{\mathbb{R}}
\newcommand{\Cleq}{\ensuremath{\lesssim}}  
\newcommand{\Cgeq}{\ensuremath{\gtrsim}}  
\DeclareMathOperator*{\infimum}{inf\vphantom{p}}               
\DeclareMathOperator{\cond}{cond}
\newcommand{\Mesh}{\mathcal{M}}
\newcommand{\Faces}[1]{\mathcal{F}_{#1}}
\newcommand{\LagNod}[1]{\mathcal{L}_{#1}}
\newcommand{\FacesM}{\Faces{}}
\newcommand{\FacesMint}{\FacesM^i}
\newcommand{\FacesMbnd}{\FacesM^b}
\newcommand{\LagNodM}[1]{\LagNod{#1}}
\newcommand{\LagNodMint}[1]{\LagNodM{#1}^i}
\newcommand{\LagNodMbnd}[1]{\LagNodM{#1}^b}
\newcommand{\Jump}[2]{\llbracket #1 \rrbracket_{#2}}
\newcommand{\Shape}{\gamma}
\DeclareMathOperator{\GradM}{\nabla_{\Mesh}}             
\DeclareMathOperator{\Hess}{D^2}
\DeclareMathOperator{\HessM}{D^2_{\Mesh}}                
\newcommand{\App}{P}   
\newcommand{\app}{M}										   
\newcommand{\Ritz}{\Pi}                                          
\newcommand{\id}{\mathrm{Id}}                                  
\newcommand{\AvgOper}[1]{A_{#1}}
\newcommand{\BubbOper}[1][1]{B_{#1}}
\newcommand{\snorm}[1]{\left \lvert #1 \right \rvert}          
\newcommand{\norm}[1]{\| #1 \|}                     
\newcommand{\opnorm}[3]{\norm{#1}_{\ifx#2#3\mathcal{L}(#2)\else\mathcal{L}(#2,#3)\fi}}       
\newcommand{\Domain}{\Omega}
\newcommand{\Leb}[1]{L^2(#1)}                                
\newcommand{\SobH}[1]{H^1_0(#1)}
\newcommand{\SobbH}{H^2_0(\Domain)}
\newcommand{\degree}{p}
\newcommand{\Poly}[1]{\mathbb{P}_{#1}}                              
\newcommand{\Polyell}[1]{S_{#1}^0}
\newcommand{\Lagr}[1]{S_{#1}^1}
\newcommand{\crs}{\mathit{CR}}
\newcommand{\CRS}[1]{\crs_{#1}}
\newcommand{\mrs}{\mathit{MR}}
\newcommand{\MRS}[1]{\mrs_{#1}} 
\newcommand{\gls}{\mathit{CR}}
\newcommand{\GL}[1]{\gls_{#1}}
\newcommand{\hct}{\mathit{HCT}}
\newcommand{\HCT}{\hct}
\newcommand{\rhct}{\mathit{rHCT}}
\newcommand{\Cqopt}{C_{\mathrm{qopt}}}
\newcommand{\Cstab}{C_{\mathrm{stab}}} 
\newcommand{\aext}{\widetilde{a}}
\newcommand{\Vext}{\widetilde{V}}                         
\newcommand{\vext}{\widetilde{v}}
\begin{document}

\title[Quasi-optimal nonconforming methods II]
{Quasi-optimal nonconforming methods for symmetric elliptic problems.
II -- Overconsistency and classical nonconforming elements}

\author[A.~Veeser]{Andreas Veeser}
\author[P.~Zanotti]{Pietro Zanotti}

\keywords{}

\begin{abstract}
We devise variants of classical nonconforming methods for symmetric elliptic problems. These variants differ from the  original ones only by transforming discrete test functions into conforming functions before applying the load functional. We derive and discuss  conditions on these transformations implying that the ensuing method is quasi-optimal and that its quasi-optimality constant coincides with its stability constant. As applications, we consider the approximation of the Poisson problem with Crouzeix-Raviart elements and higher order counterparts and the approximation of the biharmonic problem with Morley elements. In each case, we construct a computationally feasible transformation and obtain a quasi-optimal method with respect to the piecewise energy norm on a shape regular mesh. 
\end{abstract}

\maketitle

\section{Introduction}
\label{S:intro}
%
%
This article is the second in a series on the design and analysis of quasi-optimal nonconforming methods for symmetric elliptic problems. It concerns methods with classical nonconforming elements. The Crouzeix-Raviart element \cite{Crouzeix.Raviart:73} approximating the Poisson problem may be viewed as a prototypical example of such methods. Let us illustrate our motivation and main results in this case.

\medskip Let $\Mesh$ be a simplicial mesh of a domain $\Domain\subseteq\R^d$, $d\geq2$, and denote by $\FacesM$ the set of its $(d-1)$-dimensional faces. Furthermore, let $\CRS{}$ be the discrete space of real-valued functions on $\Omega$ that are piecewise affine, continuous in the midpoints of the internal faces of $\Mesh$ and vanish in the midpoints of boundary faces.  Since such functions can be discontinuous or nonzero in other points of the faces, $\CRS{}$ is not a subspace of the Sobolev space $H^1_0(\Omega)$. However, the Crouzeix-Raviart interpolant $\Ritz_{\crs}:H^1_0(\Omega) \to \CRS{}$, given by
\begin{equation}
\label{CR-interpolation}
\forall F \in \FacesM
\quad
\int_F \Ritz_{\crs} u = \int_F u,
\end{equation}
reveals remarkable approximation properties: for any function $u \in H^1_0(\Omega)$, we have
\begin{equation}
\label{CR-localization}
\begin{aligned}
\inf_{s \in \CRS{}} \norm{\GradM (u-s)}_{\Leb{\Domain}}
&=
\norm{\GradM (u-\Ritz_{\crs} u)}_{\Leb{\Domain}}
\\
&=
\left(
\sum_{K\in\Mesh} \inf_{p \in \Poly{1}(K)}
\norm{\nabla(u-p)}_{\Leb{K}}^2
\right)^{\frac{1}{2}},
\end{aligned}
\end{equation}
where $\GradM$ stands for the broken gradient. We see that, although the global best error of the Crouzeix-Raviart space is coupled or constrained at the midpoints of the faces, it is locally computable and exploits optimally the approximation capabilities of its shape functions. The latter improves on the space of continuous piecewise affine functions, which exploits the shape functions only in a quasi-optimal manner, depending on the shape coefficient of $\Mesh$; cf.\ Veeser \cite{Veeser:16}.

The space $\CRS{}$ is used in the homonymous method for the Poisson problem,
\begin{equation}
\label{org-CR-for-Poisson}
U \in \CRS{}
\quad\text{such that}\quad
\forall \sigma \in \CRS{}
\quad
\int_{\Omega} \GradM U \cdot \GradM\sigma
=
\int_{\Omega} f \sigma,
\end{equation}
where we suppose $f \in \Leb{\Domain}$. This is a nonconforming Galerkin method in the sense of the first part \cite{Veeser.Zanotti:17p1} of this series, because the underlying bilinear and linear forms on the conforming part $\CRS{}\cap H^1_0(\Omega)$ of the discrete space arise by simple restriction of their infinite-dimensional counterparts.

The question arises how much of the aforementioned remarkable approximation properties of the Crouzeix-Raviart space $\CRS{}$ are exploited in the method \eqref{org-CR-for-Poisson}. The so-called second Strang lemma \cite{Berger.Scott.Strang:72} yields
\begin{equation}
\label{2d-Strang}
 \norm{\GradM(u-U)}
 \approx
 \inf_{s \in \CRS{}} \norm{\GradM (u-s)}_{\Leb{\Domain}}
 +
 \text{CE}(u),
\end{equation}
where $\text{CE}(u)$ measures the consistency error induced by nonconforming discrete test functions. The survey \cite{Brenner:15} by S.~Brenner illustrates two approaches for bounding $\text{CE(u)}$: the classical one and the medius analysis initiated by Th.~Gudi \cite{Gudi:10}. Both bounds involve regularity beyond $H^1_0(\Omega)$: for example, the norm $\norm{D^2u}_{\Leb{\Domain}}$ of the Hessian for the classical approach and an $L^2$-oscillation of $\Delta u$ for the medius analysis. Remark~4.9 of \cite{Veeser.Zanotti:17p1} reveals that $\text{CE}(u)$ cannot be bounded only in terms of the best error $\inf_{s \in \CRS{}} \norm{\GradM (u-s)}_{\Leb{\Domain}}$. The reason for this lies in the fact that \eqref{org-CR-for-Poisson} applies nonconforming functions to the load $f$. Thus, the classical Crouzeix-Raviart method \eqref{org-CR-for-Poisson} is not quasi-optimal with respect to $\norm{\GradM\cdot}_{\Leb{\Domain}}$ and so does not always fully exploit the approximation properties of its underlying space $\CRS{}$. 

In order to remedy, we may consider, for a bounded linear smoothing operator $E:\CRS{}\to H^1_0(\Omega)$ to be specified, the following two variants of the original Crouzeix-Raviart method:
\begin{subequations}
\begin{align}
\label{mod-CR-for-Poisson}
&U_E \in \CRS{}
\quad\text{such that}\quad
\forall \sigma \in \CRS{}
\quad
\int_{\Omega} \GradM U_E \cdot \GradM\sigma
=
\langle f, E\sigma\rangle,
\\
\label{mod2-CR-for-Poisson}
&\bar{U}_E \in \CRS{}
\quad\text{such that}\quad
\forall \sigma \in \CRS{}
\quad
\int_{\Omega} \GradM \bar{U}_E \cdot \nabla E\sigma
=
\langle f, E\sigma\rangle.
\end{align}
\end{subequations}
Both variants are well-defined for arbitrary $f \in H^{-1}(\Omega) = \SobH{\Domain}'$ and each one has attractive features: the bilinear form of \eqref{mod-CR-for-Poisson} is symmetric, while the error of \eqref{mod2-CR-for-Poisson} is orthogonal to the range of $E$. Analyzing an abstract version of \eqref{mod2-CR-for-Poisson} with the tools from \cite{Veeser.Zanotti:17p1},  we find that its quasi-optimality constant depends only on the range of $E$ and that, for a fixed range, the energy norm condition number of its bilinear form becomes minimal, if $E$ is a right inverse of the best approximation operator onto $\CRS{}$. Notably, the two variants also coincide under this condition.

Combining  \eqref{CR-interpolation} and \eqref{CR-localization}, we see that $E$ is a right-inverse of the best approximation operator onto $\CRS{}$ if and only if 
\begin{equation*}
\forall \sigma \in \CRS{}, F \in \FacesM
\quad
\int_F E\sigma = \int_F \sigma.
\end{equation*}
Exploiting this local characterization, we construct a computationally feasible operator $E$ such that \eqref{mod2-CR-for-Poisson}, or equivalently \eqref{mod-CR-for-Poisson}, is quasi-optimal. More precisely, we have
\begin{equation*}
\norm{\GradM(u-U_E)}
\leq
\opnorm{E}{S}{V}
\inf_{s \in \CRS{}} \norm{\GradM (u-s)}_{\Leb{\Domain}},
\end{equation*}
where $\opnorm{E}{S}{V}$ is the best constant and equals the stability constant of resulting method. The construction of $E$, which is inspired by the one in Badia et al.\ \cite{Badia.Codina.Gudi.Guzman:14}, also ensures that $\opnorm{E}{S}{V}$ can be bounded in terms of the shape coefficient of the mesh $\Mesh$. It is also instrumental for designing quasi-optimal DG and other
interior penalty methods in the third part \cite{Veeser.Zanotti:17p3} of this series.

The rest of the article is organized as follows. In \S\ref{S:Groundwork} we recall relevant results of \cite{Veeser.Zanotti:17p1} and analyze well-posedness, conditioning and quasi-optimality of the abstract counterpart of \eqref{mod2-CR-for-Poisson}. In \S\ref{S:Applications} we then construct the aforementioned smoothing operator $E$, as well as similar operators when approximating the Poisson problem with Crouzeix-Raviart-like elements of arbitrary fixed order and 
the biharmonic problem with the Morley element.

In the discussion of the examples, we restrict ourselves to polyhedral Lipschitz domains and homogeneous essential boundary conditions. More general settings as well as numerical experiments will be presented elsewhere.
\section{Quasi-optimal and overconsistent nonconforming methods}
\label{S:Groundwork}
This section devises the approach to the design of quasi-optimal nonconforming methods, which is exemplified in the introduction \S\ref{S:intro}. A key feature of the ensuing methods is that their quasi-optimality constant is not affected by consistency.

\subsection{Quasi-optimality of nonconforming methods}
\label{S:Qopt-nconf-methods}
%
We first briefly summarize the results of \cite{Veeser.Zanotti:17p1}, focusing on one approach to measure nonconforming consistency.

We consider the following linear and symmetric elliptic problems. Given an infinite-dimensional Hilbert space $V$ with scalar product $a(\cdot, \cdot)$ and \emph{energy norm} $\norm{\cdot} = \sqrt{a(\cdot,\cdot)}$, let $V'$ be the topological dual space of $V$.  Denote by $\left\langle \cdot, \cdot\right\rangle$ the dual pairing of $V$ and $V'$ and by $\norm{\ell}_{V'} := \sup_{v\in V, \norm{v}\leq 1} \langle\ell,v\rangle$ the \emph{dual energy norm}
on $V'$. 
The \emph{continuous problem} is then: given $\ell \in V'$, find $u\in V$ such that
\begin{equation}
\label{ex-prob}
\forall v \in V
\quad
a(u, v) 
= 
\langle \ell, v \rangle.
\end{equation} 
%
This problem is well-posed in the sense of Hadamard and, introducing the Riesz isometry $A:V \to V'$, $v \mapsto a(v, \cdot)$, we have $u=A^{-1}\ell$ with
\begin{equation}
\label{isometry}
\norm{u} = \norm{\ell}_{V'}.
\end{equation}

We shall look for quasi-optimal methods in the following subclass of nonconforming linear variational methods for \eqref{ex-prob}. Let $S$ and $b$ the counterparts of $V$ and $a$, respectively. More precisely, let $S$ be a finite-dimensional linear space and $b : S \times S \to \R$ a nondegenerate bilinear form in that $ b(s,\sigma) = 0 $ for all $\sigma \in S$ entails $s=0$.
Remark~2.3 of \cite{Veeser.Zanotti:17p1} shows that a quasi-optimal method is necessarily \emph{entire}, i.e.\ defined for all $\ell \in V'$. Taking into account that we do not require $S \subseteq V$, we therefore introduce a linear operator $E:S \to V$ and define a linear operator $\app: V' \to S$ by the following \emph{discrete problem}: given $\ell \in V'$, find $\app\ell \in S$ such that
\begin{equation}
\label{disc-prob}
\forall \sigma \in S
\quad
b(\app \ell, \sigma)
=
\langle \ell, E \sigma \rangle,
\end{equation}
where we write $\left\langle \cdot, \cdot\right\rangle$ also for the pairing of $S$ and $S'$. We thus approximate the solution $u$ of \eqref{ex-prob} by $\app \ell$. Since $S\not\subseteq V$ often arises for the lack of smoothness, we refer to $E$ as a smoothing operator. Moreover, we identify $M$ with the triplet $(S,b,E)$, ignoring some slight ambiguity; see also \cite[Remark~2.2]{Veeser.Zanotti:17p1}.

The relationship between continuous and discrete problem is illustrated in Figure~\ref{F:EntireNonconformingMethods}.
\begin{figure}	
	\[
	\xymatrixcolsep{4pc}		
	\xymatrixrowsep{4pc}		
	\xymatrix{
		V'   									
		\ar[d]^{E^*}
		\ar[r]^{A^{-1}}       
		\ar[rd]_{M} 					
		& V 									
		\ar@{-->}[d]^{\App} \\		
		S'										
		\ar[r]^{B^{-1}}				
		& S}               		
	\]
	\caption{\label{F:EntireNonconformingMethods}	Diagram with operators $A$, $B$, $E$, nonconforming method $\app = (S, b, E)$, and induced approximation operator $\App$.}
\end{figure}
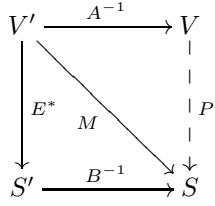
The commutative diagram involves also
\begin{itemize}
	\item the adjoint $E^* :V' \to S'$ given by $\left\langle E^* \ell, \sigma \right\rangle =  \left\langle \ell , E\sigma \right\rangle $ for all $\ell \in V'$, $\sigma \in S$,
	\item the invertible map $B:S \to S'$, $s \mapsto b(s,\cdot)$,
	\item  the approximation operator $\App := \app A$
\end{itemize} 
and reveals the representation
\begin{equation}
\label{M=}
\app = B^{-1} E^*.
\end{equation}

To assess the quality of the approximations given by $\app$, we assume that $a$ can be extended to a scalar product $\aext$ on $\Vext:=V+S$ and measure the error in the \emph{extended energy norm}
\begin{equation*}
 \norm{\cdot} := \sqrt{\aext(\cdot, \cdot)}
 \quad\text{on }\Vext,
\end{equation*}
with the same notation as for the original one.  The best approximation error within $S$ to some function $v \in V$ is then given by $\inf_{s\in S} \norm{v-s}$.  We say that the method $M$ is \emph{quasi-optimal} for \eqref{ex-prob} with respect to the extended energy norm if its approximations are are uniformly close to this benchmark, more precisely, if there exists a constant $C\geq 1$ such that
\begin{equation}
\label{Cqopt-def}
\forall u \in V
\qquad
\norm{ u - \App u }
\leq
C \inf_{s\in S} \norm{u-s}.
\end{equation}	
We denote by $\Cqopt$ the smallest constant in \eqref{Cqopt-def} and refer to it as the \textit{quasi-optimality constant} of $\app$.
To design quasi-optimal methods of the type $(S,b,E)$, our departure point is the following result.
\begin{theorem}[Stability, consistency, and quasi-optimality]
	\label{T:abs-qopt}
	Any nonconforming method $\app = (S,b,E)$ for \eqref{ex-prob} satisfies:
	\begin{itemize}
		\item[(i)] $\app$ is bounded, or fully stable, with
		\begin{equation*}
		\Cstab
		:=
		\opnorm{\app}{V'}{S}
		=
		\sup_{\sigma\in S} \, \frac{\norm{E\sigma}}{\sup_{s \in S, \norm{s} = 1} b(s,\sigma)}.
		\end{equation*}
		\item[(ii)] $\app$ is quasi-optimal if and only if it is fully algebraically consistent in that  
		\begin{equation*}
		\forall u \in S\cap V, \sigma \in S
		\quad
		b(u, \sigma)
		=
		a(u, E\sigma).  
		\end{equation*}
		\item[(iii)] If $\app$ is quasi-optimal, then its quasi-optimality constant is 
		\begin{equation*}
		\Cqopt
		=
		\sup_{\sigma \in S} \,
		\frac{ \sup_{v\in V, s \in S, \norm{v+s} = 1} a(v,E\sigma) + b(s,\sigma) }
		{\sup_{s \in S, \norm{s} = 1} b(s,\sigma)}
		\end{equation*}
		and satisfies
		\begin{equation*}
		\max \{\Cstab, \delta \}
		\leq
		\Cqopt
		\leq
		\sqrt{\Cstab^2+\delta^2},
		\end{equation*}
		where $\delta \in [0,\infty)$ is the consistency measure given by the smallest constant in
		\begin{equation*}
		\forall s, \sigma \in S
		\quad
		\snorm{b(s,\sigma) - \aext(s, E\sigma)}  
		\leq
		\delta
		\sup_{\hat{s} \in S, \norm{\hat{s}} = 1} b(\hat{s},\sigma)
		\inf \limits_{v \in V}
		\norm{s - v}.
		\end{equation*}
	\end{itemize} 
\end{theorem}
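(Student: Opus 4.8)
The plan is to prove the three statements in sequence, since (ii) and (iii) build on the framework set up in part~(i). Throughout, the main tool is the representation $\app = B^{-1}E^*$ together with the duality between $S$ and $S'$ under the bilinear form $b$, and the crucial observation that the error $u - \App u$ splits along $V$ and $S$ in a way that interacts with $b$ through the consistency defect $b(s,\sigma) - \aext(s,E\sigma)$.

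For part~(i), I would compute $\norm{\app\ell}$ directly from the defining relation \eqref{disc-prob}. For a fixed $\ell \in V'$, nondegeneracy of $b$ gives $\norm{\app\ell} = \sup_{\sigma \in S, \norm{\sigma} = 1} b(\app\ell, \sigma) / \big(\cdots\big)$ — more carefully, I would use that for any $s \in S$ one has $\norm{s} = \sup_{\sigma} b(s,\sigma)/\sup_{\hat s}b(\hat s, \sigma)\cdot(\dots)$; the clean route is to write $\norm{\app\ell} = \sup_{\sigma\in S}\frac{b(\app\ell,\sigma)}{\sup_{s\in S,\norm{s}=1}b(s,\sigma)} = \sup_{\sigma\in S}\frac{\langle\ell,E\sigma\rangle}{\sup_{s}b(s,\sigma)}$, then take the supremum over $\norm{\ell}_{V'}\le 1$ and use that $\sup_{\norm{\ell}_{V'}\le1}\langle\ell,E\sigma\rangle = \norm{E\sigma}$ by definition of the dual norm. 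This yields the stated formula for $\Cstab$. The only subtlety is justifying the first identity for $\norm{\app\ell}$, which is a standard finite-dimensional duality fact for nondegenerate bilinear forms.

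For part~(ii), I would argue both implications. If $\app$ is quasi-optimal, then taking $u \in S\cap V$ forces $\inf_{s\in S}\norm{u-s} = 0$, hence $\App u = u$; unwinding $\App = \app A$ and \eqref{disc-prob} gives $b(u,\sigma) = \langle Au, E\sigma\rangle = a(u,E\sigma)$ for all $\sigma$, which is full algebraic consistency. Conversely, assuming full algebraic consistency, I would show the error is bounded. The key algebraic identity: for $u \in V$ and any $s \in S$, write $u - \App u = (u - s) + (s - \App u)$ with $u-s \in V$ (well, in $\Vext$) and $s - \App u \in S$; test the $S$-component against itself using $b$, insert $b(s - \App u, \sigma) = b(s,\sigma) - \langle Au, E\sigma\rangle = b(s,\sigma) - a(u,E\sigma)$, and use algebraic consistency on a conforming piece to reduce to the consistency defect applied to $u - (\text{something in }V)$. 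This is essentially the computation behind the quantitative bound in (iii), so I would prove (ii) and (iii) together.

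For part~(iii), the exact formula for $\Cqopt$ comes from making the above splitting optimal. I would write $\Cqopt = \sup_{u\in V}\norm{u-\App u}/\inf_{s\in S}\norm{u-s}$ and, since $\App$ is linear and the quotient is scale-invariant, reduce to a dual/sup characterization: $\norm{u - \App u} = \sup_{w\in\Vext,\norm{w}=1}\aext(u-\App u, w)$, then decompose $w = v + s$ with $v\in V$, $s\in S$, and use $\aext(u - \App u, v) = \aext(u,v) - \aext(\App u, v) = \langle Au,v\rangle - \aext(\App u,v)$ together with the defining relation for $\App u$ tested suitably; after collecting terms one isolates $\sup_{v,s:\norm{v+s}=1} a(v,E\sigma) + b(s,\sigma)$ in the numerator and $\sup_{s:\norm{s}=1}b(s,\sigma)$ in the denominator, with $\sigma$ ranging over $S$. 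The two-sided bound $\max\{\Cstab,\delta\}\le\Cqopt\le\sqrt{\Cstab^2+\delta^2}$ then follows by comparing this formula against the formulas for $\Cstab$ (take $v = 0$ in the numerator, or rather bound below by the $s$-only contribution) and for $\delta$ (the numerator's $a(v,E\sigma)$ part is controlled, via algebraic consistency to subtract a conforming correction, by $\delta\,\sup_{\hat s}b(\hat s,\sigma)\inf_{v'}\norm{s-v'}$), and then an orthogonality/Pythagoras argument in $\Vext$ using that the $V$-part and $S$-part of the competitor $w$ can be chosen $\aext$-orthogonal. I expect the main obstacle to be precisely this last step: organizing the numerator $\sup_{v,s}\{a(v,E\sigma)+b(s,\sigma)\}$ so that the cross term vanishes and one genuinely gets the Pythagorean $\sqrt{\Cstab^2+\delta^2}$ rather than a cruder sum; this requires choosing the right orthogonal decomposition of the unit vector $w = v+s$ and using the algebraic consistency hypothesis to replace $s$ by $s - v'$ with $v' \in V$ realizing (or nearly realizing) $\inf_{v'}\norm{s - v'}$, so that the consistency defect $\delta$ enters with the correct geometry.
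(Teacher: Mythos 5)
Two preliminary remarks. First, the paper does not actually prove this theorem here: its ``proof'' is a one-line citation to Theorems 4.1--4.3 of the companion paper \cite{Veeser.Zanotti:17p1}, so you are reconstructing deliberately outsourced material and the real comparison is with that reference. Second, your part (i) is correct and essentially complete: the identity $\norm{r}=\sup_{\sigma\in S} b(r,\sigma)/\sup_{\norm{s}=1}b(s,\sigma)$ is the finite-dimensional biduality for the nondegenerate form $b$, and exchanging the two suprema together with $\sup_{\norm{\ell}_{V'}\leq1}\langle\ell,E\sigma\rangle=\norm{E\sigma}$ gives the stated $\Cstab$. The forward implication of (ii) (quasi-optimality forces $\App u=u$ on $S\cap V$, hence consistency) is also fine.

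The genuine gaps are in (iii) and in the converse of (ii). (a) The exact \emph{formula} for $\Cqopt$ is the hard part, and ``after collecting terms one isolates'' is not a derivation. The viable route is to note that, under full algebraic consistency, the value $a(v,E\sigma)+b(s,\sigma)$ depends only on $w=v+s$ and not on the chosen decomposition (indeed, independence of the decomposition is \emph{equivalent} to full algebraic consistency, since two decompositions differ by an element of $S\cap V$ --- this observation both explains why the numerator is finite only in the consistent case and yields (ii) cleanly). One then gets a functional $\ell_\sigma\in\Vext'$ and an idempotent extension $R:\Vext\to S$ of $\App$ defined by $b(Rw,\sigma)=\ell_\sigma(w)$, whose norm is $\sup_\sigma N(\sigma)/D(\sigma)$ by the duality of (i); the claimed \emph{equality} $\Cqopt=\opnorm{R}{\Vext}{\Vext}$ then requires the Kato/Xu--Zikatanov-type identity $\opnorm{\id-R}{\Vext}{\Vext}=\opnorm{R}{\Vext}{\Vext}$ in the refined form of \cite{Tantardini.Veeser:16} (one restricts $\id-R$ to $(\id-\Ritz)V$ via $(\id-R)=(\id-R)(\id-\Ritz)$ on $V$). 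Your sketch never invokes anything of this kind, and a direct ``decompose and collect'' computation gives at best an upper bound, not the equality. (b) Your Pythagoras step is described with a decomposition that does not exist: in general there is no splitting $w=v+s$ with $v\in V$, $s\in S$ and $v\perp s$. What works is the $\aext$-orthogonal projection $Q$ of $\Vext$ onto $V$: writing $\ell_\sigma(w)=\aext(w,E\sigma)+\bigl(b(s,\sigma)-\aext(s,E\sigma)\bigr)$, the first term is $\leq\norm{E\sigma}\,\norm{Qw}$ because $E\sigma\in V$, while the second is $\leq\delta D(\sigma)\norm{(\id-Q)w}$ because $\inf_{v'\in V}\norm{s-v'}=\norm{(\id-Q)w}$ for every admissible $s$; Cauchy--Schwarz in $\R^2$ then gives $\sqrt{\Cstab^2+\delta^2}$, and the choices $w\in V$ and $w=(\id-Q)s$ give the two lower bounds. (c) The converse of (ii) needs $\delta<\infty$ under full algebraic consistency, which you do not establish; it follows from the finite dimensionality of $S/(S\cap V)$ once one observes that the defect $b(\cdot,\sigma)-\aext(\cdot,E\sigma)$ vanishes on $S\cap V$, but this is a step, not a triviality, because $\inf_{v\in V}\norm{s-v}$ measures distance to all of $V$, not to $S\cap V$.
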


\begin{proof}
	Since $E$ is defined on the whole discrete space $S$ and bounded, the claims (i)-(iii) follow from Theorems~4.1, 4.2 and 4.3 of \cite{Veeser.Zanotti:17p1}.
\end{proof}

We restrict ourselves to a few comments on Theorem \ref{T:abs-qopt}; for a comprehensive discussion, see \cite{Veeser.Zanotti:17p1}. In items (i) and (ii), `fully' refers to the fact that all (and not only certain smooth) instances of the continuous problem \eqref{ex-prob} are involved, via $V'$ in item (i) and via $V$ in item  (ii). Representation \eqref{M=} and item (i) show that \emph{full stability} hinges on the property that $E$ maps all $S$ into $V$ and that the stability constant $\Cstab$ results from the interplay between $E$ and the discrete bilinear form $b$. Their relationship to the continuous bilinear form $a$ suitable for quasi-optimality is identified in item (ii) by the condition of \emph{full algebraic consistency}. This condition is equivalent to requiring that $M$ reproduces every solution of \eqref{ex-prob} which happens to be in $S$. 
Item (iii) generalizes the formula for the quasi-optimality constant in   \cite{Tantardini.Veeser:16} for conforming methods, showing that it may be also affected by consistency for truly nonconforming methods. Furthermore, we capture this effect by means of the quantity $\delta$, which is finite if and only if $M$ is algebraically fully consistent and almost insensitive to stability, see \cite[Remark 3.5]{Veeser.Zanotti:17p1}. We call a method $M$ \emph{(algebraically) overconsistent} whenever its consistency measure $\delta$ vanishes.

A simple manner to \emph{partially} satisfy the condition of algebraic consistency is restriction. More precisely, if
\begin{equation}
\label{NonConformingGalerkinMethod}
b_{|S_C \times S_C} = a_{|S_C \times S_C}
\quad\text{and}\quad
E_{|S_C} = \id_{S_C},
\end{equation}
where $S_C := S \cap V$ is the conforming subspace of $S$, then the condition of algebraic consistency holds for conforming test functions $\sigma \in S_C$. Methods with \eqref{NonConformingGalerkinMethod} generalize (conforming) Galerkin methods and we refer to them as \textit{nonconforming Galerkin methods}. They are natural candidates for quasi-optimal nonconforming methods, but, in contrast to conforming Galerkin methods, they are not completely determined by the continuous problem and the discrete space. 

\subsection{Overconsistency}
\label{S:smoothing-right-inv}
%
Assume that we are given $V$ and $a$ of the continuous problem \eqref{ex-prob} and a discrete space $S$, along with an extended scalar product $\aext$. Then, in view of Theorem \ref{T:abs-qopt}, the design of a quasi-optimal method on $S$ reduces to the task of finding a smoothing operator $E$ and a bilinear form $b$ implying full algebraic consistency. There are three possibilities to define $b$ in terms of $\aext$ and $E$:
\begin{equation*}
\aext(\cdot,\cdot),
\qquad
\aext(\cdot,E\cdot),
\qquad\text{and}\qquad
\aext(E\cdot,E\cdot).
\end{equation*}
Since the third option corresponds to a conforming Galerkin method on the range $T=R(E)$ of $E$ also when $S\not\subseteq V$, it is covered by standard theory. We therefore do not consider it here. The first two, truly nonconforming options separate the advantages of a conforming Galerkin method for \eqref{ex-prob}: the first one is a symmetric bilinear form, while the second one corresponds to overconsistency. Interestingly, the two options coincide and unify their advantages if and only if the smoothing operator $E$ is a right inverse for the $\aext$-orthogonal projection $\Ritz$ from $\Vext$ onto $S$ because of the identity $\aext(s,E\sigma) = \aext(s,\Ritz E \sigma)$
for all $s,\sigma \in S$.

Here we investigate the second option
\begin{equation}
\label{bE}
b_E(s,\sigma) := \aext(s,E\sigma),
\qquad
s, \sigma \in S,
\end{equation}
of overconsistency, which, partially, shall bring us back to the first option. Writing $\app_E$ as an abbreviation for $(S,b_E,E)$, the resulting discrete problem reads as follows: given any $\ell\in V'$, find $\app_E\ell\in S$ such that
\begin{equation}
\label{ME}
\forall \sigma \in S
\quad
\aext(\app_E\ell, E\sigma)
=
\langle \ell, E\sigma \rangle.
\end{equation} 
%
Since the test function $\sigma$ enters only via $E\sigma$, such a method can be viewed as a Petrov-Galerkin method over $S \times T$ with the conforming test space $T:=R(E)$. In other words, \eqref{ME} is equivalent to
\begin{equation*}
\forall \tau \in T
\quad
\aext(\app_E\ell, \tau)
=
\langle \ell, \tau \rangle.
\end{equation*}
Consequently, properties of the map $\app_E$ depend on $E$ only through its range $T=R(E)$. In what follows, we underline this aspect whenever applicable.
%
Let us start by examining the solvability and related properties of \eqref{ME}.

\begin{remark}[Injectivity of smoothing]
\label{R:injectivity-of-smoothing}
In view of \eqref{M=}, the injectivity of the smoothing operator $E$ is equivalent to the surjectivity of $\app$. In connection with a bilinear form $b_E$, it becomes a necessary condition for the well-posedness of \eqref{ME}.
\end{remark}

\begin{lemma}[Nondegeneracy of $b_E$]
\label{L:nondegeneracy-of-bE}
For any injective linear operator $E:S\to V$ with range $T=R(E)$, the following statements are equivalent:
\begin{subequations}
\label{bEnd}
\begin{align}
\label{bEGalerkin-nd}
		&b_E \text{ is nondegenerate on } S\times S,
		\\
		\label{bEPetrov-Galerkin-nd}
		&\aext(\cdot,\cdot) \text{ is nondegenerate on } S\times T,
		\\
		\label{bEapprox-nd}
		&\Ritz_{|T} \text{ is invertible},
		\\
		\label{bEgeometric-nd}
		&S \cap T^\perp = \{0\},
\end{align}
\end{subequations}
where $\Ritz$ stands for the $\aext$-orthogonal projection from $\Vext$ onto $S$. If $b_E$ is nondegenerate, then its energy norm condition number is given by
\begin{equation}
	\label{bEcond}
	\cond(b_E)
	=
	\opnorm{(\Ritz E)^{-1}}{T}{T} \opnorm{\Ritz E}{S}{S}
	\geq
	1,
\end{equation}
which is minimized by $E = (\Ritz_{|T})^{-1}$.
\end{lemma}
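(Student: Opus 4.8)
To prove Lemma~\ref{L:nondegeneracy-of-bE}, I would organize the argument around the bilinear form $b_E(s,\sigma)=\aext(s,E\sigma)$ and the map $\Ritz E:S\to S$, since everything ultimately factors through the latter. The first task is to establish the cycle of equivalences \eqref{bEGalerkin-nd}$\Leftrightarrow$\eqref{bEPetrov-Galerkin-nd}$\Leftrightarrow$\eqref{bEapprox-nd}$\Leftrightarrow$\eqref{bEgeometric-nd}; the second is to verify the formula \eqref{bEcond} for $\cond(b_E)$ and its minimization.

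\textbf{Step 1: the equivalences.} Since $E$ is injective, $E:S\to T$ is a linear isomorphism, so the substitution $\tau=E\sigma$ turns nondegeneracy of $b_E$ on $S\times S$ into the statement that $\aext(s,\tau)=0$ for all $\tau\in T$ forces $s=0$, i.e.\ $S\cap T^\perp=\{0\}$; this already links \eqref{bEGalerkin-nd}, \eqref{bEPetrov-Galerkin-nd}, and \eqref{bEgeometric-nd} once one also observes that a nondegenerate bilinear form between finite-dimensional spaces of equal dimension ($\dim S=\dim T$) is automatically nondegenerate in the second argument too. For \eqref{bEapprox-nd}, recall that $\Ritz$ is the $\aext$-orthogonal projection onto $S$, so by definition $\aext(s,\tau)=\aext(s,\Ritz\tau)$ for $s\in S$, $\tau\in\Vext$; hence $\aext(\cdot,\cdot)$ is nondegenerate on $S\times T$ exactly when $\Ritz$ restricted to $T$ has trivial kernel, and since $\dim T=\dim S$ this is equivalent to $\Ritz_{|T}:T\to S$ being invertible. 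Equivalently one can phrase this via $\Ritz E:S\to S$: $b_E(s,\sigma)=\aext(s,E\sigma)=\aext(\Ritz E\sigma,s)$ wait — more cleanly, $b_E(s,\sigma)=\aext(s,E\sigma)=\aext(s,\Ritz E\sigma)$, and since $\aext$ is an inner product on $S$, nondegeneracy of $b_E$ is equivalent to injectivity, hence invertibility, of $\Ritz E:S\to S$.

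\textbf{Step 2: the condition number.} Assuming nondegeneracy, $b_E(s,\sigma)=\aext(s,\Ritz E\sigma)$ exhibits $b_E$ as the inner product $\aext$ on $S$ precomposed in the second slot with the invertible operator $C:=\Ritz E\in GL(S)$. The energy-norm condition number of such a form is $\cond(b_E)=\opnorm{C}{S}{S}\,\opnorm{C^{-1}}{S}{S}$, where the operator norms are taken with respect to $\norm{\cdot}=\sqrt{\aext(\cdot,\cdot)}$ on $S$. To match the stated formula I would note $\opnorm{C^{-1}}{S}{S}=\opnorm{(\Ritz E)^{-1}}{T}{T}$: indeed $(\Ritz E)^{-1}=E^{-1}(\Ritz_{|T})^{-1}$ and one checks that $E^{-1}:T\to S$ is a $\aext$-isometry on the relevant subspaces — actually the cleaner route is to identify $(\Ritz E)^{-1}$ directly with $(\Ritz_{|T})^{-1}$ up to the isometric identification $E:S\to T$, using that $E$ need not be isometric, so I should be careful here. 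A safer formulation: write $C=\Ritz_{|T}\circ E$ with $E:S\to T$ and $\Ritz_{|T}:T\to S$, so $\cond(b_E)=\opnorm{C}{S}{S}\opnorm{C^{-1}}{S}{S}$, and then observe the stated right-hand side $\opnorm{(\Ritz E)^{-1}}{T}{T}\opnorm{\Ritz E}{S}{S}$ — I would reconcile these by interpreting $\Ritz E$ as a map $S\to S$ in the second factor and $T\to T$ via $E$ in the first; this bookkeeping of domains/codomains is the one genuinely delicate point. Finally, for the minimization: $\cond(b_E)\geq 1$ is Cauchy–Schwarz ($\opnorm{C}{}{}\opnorm{C^{-1}}{}{}\geq\opnorm{C C^{-1}}{}{}=\opnorm{\id}{}{}=1$), with equality iff $C$ is a scalar multiple of an $\aext$-isometry; and choosing $E=(\Ritz_{|T})^{-1}$ gives $C=\Ritz E=\id_S$, whence $\cond(b_E)=1$, the minimum.

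\textbf{Main obstacle.} The equivalences in Step~1 are essentially linear-algebra bookkeeping and I expect them to go through smoothly. The real care is needed in Step~2: the operator norms in \eqref{bEcond} are stated on two different spaces ($T\to T$ versus $S\to S$), and reconciling these with the single operator $C=\Ritz E$ requires tracking precisely how $E$ and $\Ritz_{|T}$ compose and whether $E$ is being treated as an isometry or merely an isomorphism — it is not assumed isometric, so the factorization $(\Ritz E)^{-1}=E^{-1}(\Ritz_{|T})^{-1}$ must be handled with the domain/codomain conventions that make the two displayed norms equal. Once that identification is pinned down, the inequality $\cond(b_E)\geq 1$ and the optimality of $E=(\Ritz_{|T})^{-1}$ are immediate.
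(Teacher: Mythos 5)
Your proposal is correct and follows essentially the same route as the paper: the equivalences all reduce to the identity $b_E(s,\sigma)=\aext(s,\Ritz E\sigma)$ together with $\dim S=\dim T$, and the condition number formula comes from $\sup_{s\in S} b_E(s,\sigma)/\norm{s}=\norm{\Ritz E\sigma}$, exactly as in the paper. On the one point you flag as delicate: the paper's own proof also concludes with $\opnorm{(\Ritz E)^{-1}}{S}{S}\,\opnorm{\Ritz E}{S}{S}$, treating $\Ritz E$ as a map $S\to S$ throughout, so the $\mathcal{L}(T)$ norm in the statement is merely a notational identification via the isomorphism $E$ and your "safer formulation" is the intended one.
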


\begin{proof}
The claimed equivalences are essentially a special case of the inf-sup theory; we provide the details of their proofs for the sake of completeness. 
	
We first observe that $E$ is a linear isomorphism from $S$ to $T$, which implies $\dim S = \dim T$ as well as $\eqref{bEGalerkin-nd} \iff \eqref{bEPetrov-Galerkin-nd}$.
	
Next, we verify $\eqref{bEPetrov-Galerkin-nd} \implies \eqref{bEapprox-nd}$ and  let $\tau \in T$ with $\Ritz\tau = 0$. This yields $0 = \aext(s,\Ritz\tau) = \aext(s,\tau)$ for all $s \in S$ and so, using \eqref{bEPetrov-Galerkin-nd}, we see that $\tau = 0$.  Consequently, the kernel of $\Ritz_{|T}$ is trivial and the rank-nullity theorem yields that  $\Ritz_{|T}$ is a linear isomorphism from $T$ to $S$. 
	
To show $\eqref{bEapprox-nd}\implies \eqref{bEgeometric-nd}$, consider any $s \in S \cap T^\perp$. Then $\tau := (\Ritz_{|T})^{-1} s \in T$ thanks to \eqref{bEapprox-nd} and $0 = \aext(s,\tau) = \aext(s,(\Ritz_{|T})^{-1} s) = \aext( s, \Ritz(\Ritz_{|T})^{-1} s) = \aext(s,s)$ gives $s=0$.  Hence we have $S\cap T^\perp = \{0\}$.
	
We complete the proof of the equivalences by showing $\eqref{bEgeometric-nd} \implies \eqref{bEPetrov-Galerkin-nd}$. Since $\dim S = \dim T$, it suffices to check the nondegeneracy for the first argument of $\aext$, that is, given $s\in S$, $\aext(s,\tau) = 0$ for all $\tau \in T$ implies $s=0$. This condition is just a reformulation of \eqref{bEgeometric-nd}, so that the desired implication is verified.
	
Finally, assuming that $b_E$ is nondegenerate, we turn to \eqref{bEcond} and recall that the energy norm condition number of $b_E$ is given by
$	\cond(b_E)
	=
	C_E/\beta_E,
$	
where 
\begin{equation*}
	C_E
	:=
	\sup_{s,\sigma\in S} \frac{b_E(s,\sigma)}{\norm{s}\norm{\sigma}}
	\geq
	\infimum_{s\in S} \sup_{\sigma \in S}
	\frac{b_E(s,\sigma)}{\norm{s}\norm{\sigma}}
	=
	\infimum_{\sigma\in S} \sup_{s\in S}
	\frac{b_E(s,\sigma)}{\norm{s}\norm{\sigma}}
	=: 
	\beta_E > 0.
\end{equation*}
We claim that, for any $\sigma \in S$,
\begin{equation}
	\label{|.|_bE}
	\sup_{s\in S} \frac{b_E(s,\sigma)}{\norm{s}}
	=
	\norm{\Ritz E \sigma}.
\end{equation}
Indeed, if $s \in S$, the properties of $\Ritz$ and the Cauchy-Schwarz inequality yield $b_E(s,\sigma) = \aext(s,E\sigma) = \aext(s,\Ritz E\sigma) \leq \norm{s} \norm{\Ritz E \sigma}$, with equality for $s = \Ritz E \sigma$.  Exploiting \eqref{|.|_bE} in the definition of $C_E$ and the second expression for $\beta_E$, we conclude
\begin{equation*}
	\cond(b_E)
	=
	\frac{\sup_{\sigma \in S, \norm{\sigma} = 1} \norm{\Ritz E\sigma}}
	{\inf_{\sigma \in S, \norm{\sigma} = 1} \norm{\Ritz E\sigma}}
	=
	\opnorm{(\Ritz E)^{-1}}{S}{S} \opnorm{\Ritz E}{S}{S}.
	\qedhere 
\end{equation*}
\end{proof}

Next, ignoring computational feasibility, we characterize the existence of at least one smoothing operator $E$ giving rise to a nondegenerate bilinear form $b_E$. This characterization reveals that the search for right inverses is not restrictive and will be used in \cite{Veeser.Zanotti:17p3} to observe that all $b_E$ are degenerate for various nonconforming elements.
\begin{lemma}[Existence of nondegenerate $b_E$]
\label{L:ex-nd-bE}
For any discrete space $S$ and extended scalar product $\aext$, the following statements are equivalent:
\begin{subequations}
\label{ex-nd-bE}
\begin{align}
\label{ex-nd-bE;E}
		&\text{there is an injective $E:S\to V$ such that $b_E$ is nondegenerate},
\\
\label{ex-nd-bE;S}
		&S \cap V^\perp = \{0\},
\\
\label{ex-nd-bE;right inverse}
		&\text{$\Ritz_{|V}$ admits a right inverse}.
\end{align}
\end{subequations}
\end{lemma}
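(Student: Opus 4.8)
The plan is to prove the cycle of implications $\eqref{ex-nd-bE;E}\Rightarrow\eqref{ex-nd-bE;S}\Rightarrow\eqref{ex-nd-bE;right inverse}\Rightarrow\eqref{ex-nd-bE;E}$. The two implications that involve $b_E$ are short consequences of Lemma~\ref{L:nondegeneracy-of-bE}, while the middle equivalence is a purely geometric statement about the projection $\Ritz$ onto $S$.

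For $\eqref{ex-nd-bE;E}\Rightarrow\eqref{ex-nd-bE;S}$ I would argue as follows: if $E:S\to V$ is injective with $b_E$ nondegenerate and $T=R(E)$, then Lemma~\ref{L:nondegeneracy-of-bE}, namely $\eqref{bEGalerkin-nd}\iff\eqref{bEgeometric-nd}$, gives $S\cap T^\perp=\{0\}$; since $T\subseteq V$ forces $V^\perp\subseteq T^\perp$, we conclude $S\cap V^\perp\subseteq S\cap T^\perp=\{0\}$. For $\eqref{ex-nd-bE;right inverse}\Rightarrow\eqref{ex-nd-bE;E}$ I would pick $R:S\to V$ with $\Ritz_{|V}R=\id_S$ and set $E:=R$, which is injective because it has a left inverse. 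With $T=R(E)$, every $\tau\in T$ has the form $\tau=Es$ and satisfies $\Ritz\tau=\Ritz_{|V}Rs=s$, so $\Ritz_{|T}$ is bijective onto $S$ (with inverse $E$). Then Lemma~\ref{L:nondegeneracy-of-bE}, this time $\eqref{bEapprox-nd}\iff\eqref{bEGalerkin-nd}$, shows that $b_E$ is nondegenerate.

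The remaining equivalence $\eqref{ex-nd-bE;S}\iff\eqref{ex-nd-bE;right inverse}$ is the heart of the argument and the step I expect to require the most care, although the difficulty is mild. First, since $S$ is finite-dimensional, $\Ritz_{|V}$ admits a linear right inverse exactly when it is surjective, i.e.\ when $\Ritz(V)=S$. The key observation is that, for $s\in S$ and $v\in V$, self-adjointness of the $\aext$-orthogonal projection $\Ritz$ together with $\Ritz s=s$ yields $\aext(s,\Ritz v)=\aext(\Ritz s,v)=\aext(s,v)$; hence the $\aext$-orthogonal complement of $\Ritz(V)$ within $S$ coincides with $S\cap V^\perp$. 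Since $\Ritz(V)$ is a subspace of the finite-dimensional space $S$, it equals $S$ if and only if this complement is $\{0\}$, which is precisely $\eqref{ex-nd-bE;S}$. The only subtle point is that $\Vext=V+S$ need not be complete; however, the $\aext$-orthogonal projection onto the finite-dimensional subspace $S$ is nonetheless well defined and self-adjoint, so all the manipulations above are legitimate.
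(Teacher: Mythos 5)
Your proof is correct and follows the same cycle $\eqref{ex-nd-bE;E}\Rightarrow\eqref{ex-nd-bE;S}\Rightarrow\eqref{ex-nd-bE;right inverse}\Rightarrow\eqref{ex-nd-bE;E}$ as the paper; the first and last implications are essentially identical to the paper's (both reduce to Lemma~\ref{L:nondegeneracy-of-bE} via $S\cap T^\perp=\{0\}$ and via the bijectivity of $\Ritz_{|T}$ for $T=R(E)$, respectively). The one place where you genuinely diverge is the step $\eqref{ex-nd-bE;S}\Rightarrow\eqref{ex-nd-bE;right inverse}$. The paper first shows $\Ritz(V)=S$ exactly as you do, using self-adjointness of $\Ritz$, but then invokes \cite[Theorem 2.12]{Brezis:11}, reducing the existence of a right inverse to the fact that the kernel $N(\Ritz_{|V})=S^\perp\cap V$ is complemented in $V$ (by $S\cap V$). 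You instead observe that, because the target $S$ is finite-dimensional, surjectivity of $\Ritz_{|V}$ already yields a linear (hence bounded) right inverse by choosing preimages of a basis. Your route is more elementary and avoids the functional-analytic machinery; the paper's route has the minor side benefit of exhibiting a concrete complement of the kernel, but for the statement as given both arguments are complete. Your closing remark that the $\aext$-orthogonal projection onto the finite-dimensional subspace $S$ is well defined and self-adjoint even though $\Vext$ need not be complete is a legitimate and worthwhile observation; it is implicitly used by the paper as well.
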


\begin{proof}
First, we verify \eqref{ex-nd-bE;E}$\implies$\eqref{ex-nd-bE;S}. Assume $E:S\to V$ is injective and such that $b_E$ is nondegenerate.  Using Lemma \ref{L:nondegeneracy-of-bE}, we infer $S \cap T^\perp = \{0\}$ for $T = R(E)$. Since $T \subseteq V$, we have $V^\perp \subseteq T^\perp$ and $S \cap V^\perp \subseteq S \cap T^\perp = \{0\}$, whence $S \cap V^\perp = \{0\}$.
	
To show the implication \eqref{ex-nd-bE;S}$\implies$\eqref{ex-nd-bE;right inverse}, we assume that $S \cap V^\perp = \{0\}$ and observe $s \in S \cap V^\perp \iff s \in S \cap \Ritz(V)^\perp$ with the help of $\aext(v,s)=\aext(\Ritz v,s)$ for all $v\in V$ and $s \in S$.  We thus infer $\Ritz(V) = S$ and can apply \cite[Theorem 2.12]{Brezis:11} to obtain: $\Ritz_{|V}$ admits a right inverse if and only if $N(\Ritz_{|V})$ admits a complement in $V$. Since $\Ritz$ is $\aext$-orthogonal, we have $N(\Ritz_{|V}) = S^\perp \cap V$, which has the complement $S \cap V$ in $V$. Hence \eqref{ex-nd-bE;right inverse} holds.
	
The missing implication \eqref{ex-nd-bE;right inverse}$\implies$\eqref{ex-nd-bE;E} is straight-forward. Let $E:S\to V$ be a right inverse of $\Ritz_{|V}$ and observe that $E$ and $\Ritz_{|R(E)}$ have to be injective.Thus, Lemma \ref{L:nondegeneracy-of-bE} provides \eqref{ex-nd-bE;E}.
\end{proof}
%
Let us now turn to stability and quasi-optimality of overconsistent methods.
\begin{theorem}[Overconsistent quasi-optimality]
	\label{T:ME}
	Let $E:S\to V$ be any injective smoothing operator with range $T=R(E)$.  If $S \cap T^\perp = \{0\}$, then the method $\app_E = (S,b_E,E)$ is quasi-optimal with
	\begin{equation*}
	\Cqopt
	=
	\opnorm{(\Ritz_{|T})^{-1}}{S}{V}
	=
	\Cstab.
	\end{equation*}
\end{theorem}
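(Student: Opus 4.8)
The plan is to derive all three assertions from the abstract results already available, chiefly Theorem~\ref{T:abs-qopt} and Lemma~\ref{L:nondegeneracy-of-bE}. First I would check that $\app_E=(S,b_E,E)$ is a legitimate nonconforming method in the sense of \S\ref{S:Qopt-nconf-methods}: since $E$ is injective and $S\cap T^\perp=\{0\}$, Lemma~\ref{L:nondegeneracy-of-bE} (equivalence of \eqref{bEGalerkin-nd} and \eqref{bEgeometric-nd}) guarantees that $b_E$ is nondegenerate, so the discrete problem \eqref{ME} is well-posed and $\app_E$ is a well-defined method. This is the hypothesis-checking step and should be essentially immediate.

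Next I would establish quasi-optimality via the algebraic consistency criterion of Theorem~\ref{T:abs-qopt}(ii). For any $u\in S\cap V$ and $\sigma\in S$ we have, directly from the definition \eqref{bE}, $b_E(u,\sigma)=\aext(u,E\sigma)=a(u,E\sigma)$, the last equality because $u\in V$ and $\aext$ extends $a$ to $\Vext$. Hence $\app_E$ is fully algebraically consistent and therefore quasi-optimal. In fact, I expect to show more, namely that $\app_E$ is overconsistent, i.e.\ that the consistency measure $\delta$ of Theorem~\ref{T:abs-qopt}(iii) vanishes: for $s,\sigma\in S$ one has $b_E(s,\sigma)-\aext(s,E\sigma)=0$ identically, so the inequality defining $\delta$ holds with $\delta=0$. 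By the sandwich bound $\max\{\Cstab,\delta\}\le\Cqopt\le\sqrt{\Cstab^2+\delta^2}$ this forces $\Cqopt=\Cstab$, which is the second claimed equality.

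It remains to identify $\Cstab$ with $\opnorm{(\Ritz_{|T})^{-1}}{S}{V}$. I would use the formula for $\Cstab$ in Theorem~\ref{T:abs-qopt}(i): $\Cstab=\sup_{\sigma\in S}\norm{E\sigma}/\bigl(\sup_{s\in S,\norm{s}=1}b_E(s,\sigma)\bigr)$. The denominator is computed in the proof of Lemma~\ref{L:nondegeneracy-of-bE}, equation \eqref{|.|_bE}: it equals $\norm{\Ritz E\sigma}$. So $\Cstab=\sup_{\sigma\in S}\norm{E\sigma}/\norm{\Ritz E\sigma}$. Since $E$ is an isomorphism from $S$ onto $T$ and $\Ritz_{|T}$ is invertible (Lemma~\ref{L:nondegeneracy-of-bE}, \eqref{bEapprox-nd}), substituting $\tau=E\sigma$ ranges over all of $T$ and $\sigma=E^{-1}\tau=(\Ritz_{|T})^{-1}(\Ritz\tau)$, so the supremum becomes $\sup_{\tau\in T}\norm{\tau}/\norm{\Ritz\tau}=\sup_{w\in S}\norm{(\Ritz_{|T})^{-1}w}/\norm{w}=\opnorm{(\Ritz_{|T})^{-1}}{S}{V}$, where in the codomain we use the $\Vext$-norm restricted to $V$. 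This is the only step with any moving parts — keeping track of which space each norm lives on and checking that the reparametrisation $\sigma\mapsto\tau=E\sigma\mapsto w=\Ritz\tau$ is a bijection at each stage — but it is routine given the invertibility already supplied by Lemma~\ref{L:nondegeneracy-of-bE}. I do not anticipate a genuine obstacle; the work is bookkeeping of norms and reuse of \eqref{|.|_bE}.
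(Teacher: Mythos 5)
Your proof is correct and follows essentially the same route as the paper: nondegeneracy of $b_E$ from Lemma~\ref{L:nondegeneracy-of-bE}, overconsistency ($\delta=0$) forcing $\Cqopt=\Cstab$ via Theorem~\ref{T:abs-qopt}, and the identification of $\Cstab$ with $\opnorm{(\Ritz_{|T})^{-1}}{S}{V}$ through \eqref{|.|_bE} and the bijectivity of $E:S\to T$ and $\Ritz_{|T}$. (Only the incidental identity $\sigma=E^{-1}\tau=(\Ritz_{|T})^{-1}(\Ritz\tau)$ is off---the left-hand side lies in $S$ while the right-hand side equals $\tau\in T$---but this aside plays no role in your chain of suprema, which is the same as the paper's.)
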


\begin{proof}
	Since $S \cap T^\perp = \{0\}$, Lemma \ref{L:nondegeneracy-of-bE} ensures that $b_E$ is nondegenerate. Furthermore, $\app_E$ is fully stable and overconsistent by construction and so Theorem~\ref{T:abs-qopt} shows that $\app_E$ is quasi-optimal with $\Cqopt=\Cstab$. We conclude by deriving
	\begin{equation}
	\label{ME_Cstab}
	\Cstab
	=
	\sup_{\sigma \in S} \frac{\norm{E\sigma}}{\norm{\Ritz E \sigma}}
	=
	\sup_{\tau \in T} \frac{\norm{\tau}}{\norm{\Ritz\tau}}
	=
	\sup_{\sigma \in S} \frac{\norm{(\Ritz_{|T})^{-1}\sigma}}{\norm{\sigma}}
	=
	\opnorm{(\Ritz_{|T})^{-1}}{S}{V}.
	\end{equation}
	by inserting \eqref{|.|_bE} into Theorem~\ref{T:abs-qopt}~(i) and exploiting that $E:S\to T$ and $\Ritz_{|T}$ are bijective. 
\end{proof}

\begin{remark}[Overconsistency and increasing nonconformity]
For overconsistent methods, the constants $\Cqopt=\Cstab$ grow with increasing nonconformity. To see this, let $\sigma \in S\setminus V$ with $\norm{\sigma}=1$ be a nonconforming direction and let $\alpha \in [0,\pi/2)$ be its angle with the closed subspace $V$ given by $\cos\alpha = \sup_{v\in V, \norm{v}=1} |\aext(v,\sigma)|>0$. Since $T=R(E) \subseteq V$, the angle between $\sigma\in S$ and $(\Ritz_{|T})^{-1} \sigma$ is bigger than $\alpha$. Hence $\aext(\sigma, (\Ritz_{|T})^{-1}\sigma) = \norm{\sigma}^2 = 1$ yields $\Cqopt\geq\norm{(\Ritz_{|T})^{-1} \sigma} \geq (\cos\alpha)^{-1}$.
\end{remark}

\begin{remark}[Possible overestimation of classical upper bound for $\Cqopt$]
\label{R:weakness-of-ub-Cqopt}
The first identity in \eqref{ME_Cstab} and $ \opnorm{E}{S}{V} =  \sup_{\norm{\sigma}=1}\sup_{\norm{\vext}=1}  \aext(\vext, E\sigma) =: \widetilde{C}_E$ yield
\begin{equation*}
	\Cqopt
	\leq
	\opnorm{(\Ritz E)^{-1}}{S}{S} \opnorm{E}{S}{V}
	=
	\frac{\widetilde{C}_E}{\beta_E},
\end{equation*}
where the right-hand side admits the classical form of an upper bound for the quasi-optimality constant. Notably, this bound depends on $E$ not only through its range $T=R(E)$ and, closely related, may be pessimistic if $E$ has singular values of different size.
\end{remark}

Neglecting the computational feasibility, our analysis of overconsistent methods does not reveal any disadvantage of restricting the search of smoothing operators to right inverses for the $\aext$-orthogonal projection $\Ritz$. On the contrary, the bilinear form is given by simple restriction of $\aext$, thus symmetric, and minimizes its energy norm condition number within smoothing operators of the same range. We therefore aim at invoking the following special case of Theorem~\ref{T:ME}.

\begin{corollary}[Smoothing with right inverses]
\label{C:ME-right-inverse}
Let $E^\vartriangle:S\to V$ be a right inverse for the $\aext$-orthogonal projection $\Ritz$ from $\Vext$ onto $S$. 
Then $\app_{E^\vartriangle} = (S,\aext_{|S \times S},E^\vartriangle)$ and it is a nonconforming Galerkin method if and only if $E^\vartriangle{}_{|S\cap V} = \id_{S \cap V}$. Moreover,  $\app_{E^\vartriangle}$ is quasi-optimal with
\begin{equation*}
	\Cqopt
	=
	\Cstab
	=
	\opnorm{E^\vartriangle}{S}{V}.
\end{equation*}
\end{corollary}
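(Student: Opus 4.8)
The plan is to derive Corollary~\ref{C:ME-right-inverse} as a direct specialization of Theorem~\ref{T:ME}, with the extra bookkeeping coming from the defining property of a right inverse. First I would record the consequences of $E^\vartriangle$ being a right inverse of $\Ritz$, that is $\Ritz E^\vartriangle = \id_S$: in particular $E^\vartriangle$ is injective (if $E^\vartriangle\sigma = 0$ then $\sigma = \Ritz E^\vartriangle\sigma = 0$), so $E^\vartriangle$ is a linear isomorphism onto its range $T := R(E^\vartriangle)$, and $\Ritz_{|T}$ is its inverse, hence also invertible. By Lemma~\ref{L:nondegeneracy-of-bE}, equivalence $\eqref{bEapprox-nd}\iff\eqref{bEgeometric-nd}$, invertibility of $\Ritz_{|T}$ gives $S \cap T^\perp = \{0\}$, so the hypothesis of Theorem~\ref{T:ME} is met.

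Next I would identify the bilinear form. Since $\Ritz$ is the $\aext$-orthogonal projection onto $S$, for all $s,\sigma \in S$ we have $b_{E^\vartriangle}(s,\sigma) = \aext(s, E^\vartriangle\sigma) = \aext(s, \Ritz E^\vartriangle\sigma) = \aext(s,\sigma)$, using $s \in S$ and the self-adjointness/projection property of $\Ritz$. Hence $b_{E^\vartriangle} = \aext_{|S\times S}$ and $\app_{E^\vartriangle} = (S, \aext_{|S\times S}, E^\vartriangle)$ as claimed; in particular the discrete bilinear form is symmetric. This also immediately gives the nonconforming-Galerkin characterization: condition \eqref{NonConformingGalerkinMethod} reads $b_{|S_C\times S_C} = a_{|S_C\times S_C}$ together with $E^\vartriangle{}_{|S_C} = \id_{S_C}$, where $S_C = S\cap V$; the first equality holds automatically because $b_{E^\vartriangle} = \aext_{|S\times S}$ restricts to $a$ on $S_C\times S_C$ (as $\aext$ extends $a$), so $\app_{E^\vartriangle}$ is a nonconforming Galerkin method precisely when $E^\vartriangle{}_{|S\cap V} = \id_{S\cap V}$.

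Finally, for quasi-optimality I would just invoke Theorem~\ref{T:ME}: since $S\cap T^\perp = \{0\}$, the method $\app_{E^\vartriangle}$ is quasi-optimal with $\Cqopt = \Cstab = \opnorm{(\Ritz_{|T})^{-1}}{S}{V}$. It remains to note that $(\Ritz_{|T})^{-1} = E^\vartriangle$ as maps $S \to V$: indeed $\Ritz_{|T}\circ E^\vartriangle = \Ritz E^\vartriangle = \id_S$, and since $E^\vartriangle$ is a bijection $S\to T$ and $\Ritz_{|T}$ a bijection $T\to S$, $E^\vartriangle$ is the two-sided inverse of $\Ritz_{|T}$. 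Therefore $\opnorm{(\Ritz_{|T})^{-1}}{S}{V} = \opnorm{E^\vartriangle}{S}{V}$, which completes the proof.

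I do not expect a genuine obstacle here, as everything reduces to linear-algebra identities for the projection $\Ritz$ and an application of the already-established Theorem~\ref{T:ME}; the only point demanding a little care is the clean verification that a right inverse of $\Ritz$ restricted to all of $V+S$ is the same map as the inverse of $\Ritz_{|T}$, i.e.\ matching domains and codomains correctly, together with checking that $\Ritz E^\vartriangle\sigma$ simplifies to $\sigma$ using $\sigma \in S$ rather than merely $\sigma \in \Vext$.
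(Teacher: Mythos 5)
Your proposal is correct and follows exactly the route the paper intends: the corollary is stated as a special case of Theorem~\ref{T:ME}, and you supply precisely the expected details — injectivity of $E^\vartriangle$ and invertibility of $\Ritz_{|T}$ from $\Ritz E^\vartriangle=\id_S$, the identity $b_{E^\vartriangle}=\aext_{|S\times S}$ via the orthogonality of $\Ritz$, and the identification $(\Ritz_{|T})^{-1}=E^\vartriangle$ to convert the constant from Theorem~\ref{T:ME} into $\opnorm{E^\vartriangle}{S}{V}$. No gaps.
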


\section{Applications with classical nonconforming finite elements}
\label{S:Applications}
%
%
In light of Corollary~\ref{C:ME-right-inverse}, the key step for quasi-optimality is to find a right inverse $E^\vartriangle$ for the projection $\Ritz$ that provides $V$-smoothing, is suitably bounded and \emph{computationally feasible}.  In the context of finite element methods, the latter is given if, for the finite element basis  $\varphi_1,\dots,\varphi_n$ at hand, the evaluations $\langle \ell, E^\vartriangle\varphi_i\rangle$, $i=1,\dots,n$, can be implemented with $O(n)$ operations. In this section, we construct such right inverses not only for the setting considered in the introduction \S\ref{S:intro}, but also for elements of arbitrary fixed order and for fourth order problems.

\subsection{From discontinuous to continuous piecewise polynomials}
\label{S:setting-FEM}
%
In what follows, the discrete functions will be piecewise polynomials over simplicial meshes. This section introduces related notation and facts.

Let $d \in \N$ and $n \in \{0, \dots, d\}$.  An $n$-\emph{simplex} $C \subseteq \R^d$ is the convex hull of $n+1$ points $z_1, \dots, z_{n+1} \in \R^d$ spanning an $n$-dimensional affine space. The uniquely determined points $z_1, \dots, z_{n+1}$ are the vertices of $C$ and form the set $\LagNod{1}(C)$. If $n\geq1$, denote by $\Faces{C}$ the $(n-1)$-dimensional faces of $C$, which are the $(n-1)$-simplices arising by picking $n$ distinct vertices from $\LagNod{1}(C)$.  Given a vertex $z \in \LagNod{1}(C)$, its barycentric coordinate $\lambda_z^C$ is the unique first order polynomial on $C$ such that $\lambda_z^C(y) = \delta_{zy}$ for all $y \in \LagNod{1}(C)$. Then $0\leq\lambda_z^C\leq 1$ in $C$ and, for any multi-index $\alpha = (\alpha_z)_{z\in\LagNod{1}(C)} \in \N_0^{n+1}$,
\begin{equation}
\label{intregation-bary-coord}
 \int_C {\textstyle \prod_{z\in\LagNod{1}(F)} }(\lambda_z^{C})^{\alpha_z}
 =
 \frac{n!\alpha!}{(n+|\alpha|)!} \snorm{C},
\end{equation}
where $\snorm{C}$ is the $n$-dimensional Hausdorff measure in $\R^d$. We write $h_C := \diam(C)$ for the diameter of $C$, $\rho_C$ for the diameter of its largest inscribed $n$-dimensional ball, and $\Shape_C$ for its shape coefficient $\Shape_C :=  h_C / \rho_C$.

If $\degree\in\N_0$, we write $\Poly{\degree}(C)$ for the linear space of \emph{polynomials} on $C$ with (total) degree $\leq\degree$. A polynomial $P\in\Poly{\degree}(C)$ is determined by its point values at the Lagrange nodes $\LagNod{\degree}(C)$ of order $\degree$, which, for $\degree\geq2$, are given by $\big\lbrace x \in C \mid \forall z \in \LagNod{1}(C) \;
p\lambda_z^C (x) \in \N_0 \big\rbrace$. These nodes are nested in that $\LagNod{\degree}(F) = \LagNod{\degree}(C) \cap F$ for any face $F\in\Faces{C}$. Thus, the restriction $P_{|F}$ is determined by the `restriction' $\LagNod{\degree}(C) \cap F$ of the Lagrange nodes.
 
Let $\Domain\subseteq\R^d$ be an open, bounded, polyhedral and connected set with boundary $\partial\Omega$, which is assumed to be Lipschitz if $d\geq2$. Furthermore, let $\Mesh$ be a simplicial, face-to-face \emph{mesh} of $\Omega$.  More precisely, $\Mesh$ is a finite collection of $d$-simplices in $\R^d$ such that
$\overline{\Domain} = \bigcup_{K \in \Mesh} K$ and the intersection of two arbitrary elements $K_1, K_2 \in \Mesh$ is either empty or an $n$-simplex with $n\in\{0 \dots, d\}$ and $\LagNod{1}(K_1 \cap K_2) = \LagNod{1}(K_1) \cap \LagNod{1}(K_2)$. We let $\FacesM :=  \bigcup_{K \in \Mesh} \Faces{K}$ denote
the $(d-1)$-dimensional faces of $\Mesh$ and distinguish between boundary faces
$\FacesMbnd := \{ F \in \FacesM \mid F \subseteq \partial\Omega \}$ and interior faces $\FacesMint := \FacesM \setminus \FacesMbnd$.  The shape coefficient of $\Mesh$ is 
\begin{equation*}
\label{shape-Mesh}
 \Shape_\Mesh:=
 \max \limits_{K \in \Mesh} \Shape_K.
\end{equation*}
If not specified differently, $C_*$ stands for a function which is not necessarily the same at each occurrence and depends on a subset $*$ of $\{d,\Shape_\Mesh,\degree\}$, increasing in $\Shape_\Mesh$ and $\degree$ if present. Sometimes, $A \leq C_* B$ will be abbreviated to $A \Cleq B$. For instance, if $K,K' \in \Mesh$, we have
\begin{equation}
\label{adjacent-elements}
 K \cap K' \neq \emptyset
 \quad\implies\quad
 \snorm{K} \Cleq \snorm{K'}
 \text{ and }
  h_K \Cleq \rho_{K'}.
\end{equation}

The linear space of (possibly) \emph{discontinuous piecewise polynomials} over $\Mesh$ with degree $\leq\degree$ is
\begin{equation*}
\label{Poly-ell}
 \Polyell{\degree}
 :=
 \{ s \in \Leb{\Omega}\mid
  \forall K \in \Mesh \; s_{|K} \in \Poly{\degree}(K) \},
\quad
 \degree \in \N_0. 
\end{equation*}
We shall need the following notation for discontinuities or jumps associated with functions from $\Polyell{\degree}$. Given an interior face $F \in \FacesMint$, let $K_1, K_2 \in \Mesh$ be the two elements such that $F=K_1 \cap K_2$. The ordering of $K_1$ and $K_2$ is arbitrary but fixed. For any function $v$ such that $v_{|K_j}$, $j=1,2$, have traces on $F$,
we define its jump across $F$ by
\begin{subequations}
\label{jumps}
\begin{equation}
\label{Jump}
\Jump{v}{F} (x) := 
v_{|K_1}(x) - v_{|K_2}(x),
\qquad
x \in F.
\end{equation}
The fact that the sign of $\Jump{v}{F}$ depends on the ordering of $K_1$ and $K_2$ will be insignificant to our discussion. Similarly, if  $n_{K_j}$ denotes the outward unit normal vector of $\partial K_j$, $j=1,2$, and $w$ is a suitable vector field, the jump of its normal component across $F$ is
\begin{equation}
\label{Jump-normal}
\Jump{w \cdot n}{F} (x) :=  
w_{|K_1} (x) \cdot n_{K_1} + w_{|K_2}(x) \cdot n_{K_2},
\qquad
x \in F,
\end{equation}
which is insensitive to the ordering of $K_1$ and $K_2$. It will be convenient to extend these definitions to boundary faces.  Given  $F \in \FacesMbnd$, let $K \in \Mesh$ be the element such that $F = K \cap \partial \Domain$ and set
\begin{equation}
 \Jump{v}{F}(x) := v_{|K}(x)
\quad\text{and}\quad
 \Jump{w \cdot n}{F}(x) := w_{|K}(x) \cdot n_K,
\qquad
 x \in F,
\end{equation}
where again we assume that the involved traces exist.
\end{subequations}

In this notation, the space of \emph{continuous piecewise polynomials} with degree $\leq\degree$ and vanishing trace reads
\begin{equation*}
\label{S-ell}
 \Lagr{\degree} 
 :=
 \SobH{\Domain} \cap \Polyell{\degree}
 =
 \{ s \in \Polyell{\degree} \mid
  \forall F \in \FacesM \; \Jump{s}{F} \equiv 0  \},
\quad
 \degree \in \N.
\end{equation*}
Denoting by  $\LagNodM{\degree} := \bigcup_{K \in \Mesh} \LagNod{\degree}(K)$ the Lagrange nodes of $\Mesh$, the point evaluations at the interior ones $\LagNodMint{\degree} := \{ z \in \LagNodM{\degree} \mid z \in \Omega \}$ form a set of degrees of freedom for $\Lagr{\degree}$, thanks to the interplay of the nestedness of the Lagrange nodes and the fact that $\Mesh$ is face-to-face. The associated nodal basis $\{\Phi^p_z\}_{z\in\LagNodMint{\degree}}$ is given by $\Phi^p_z(y) = \delta_{zy}$ for all $y\in\LagNodMint{\degree}$. The support of each $\Phi^p_z$ is the local star $\omega_z:= \bigcup_{K' \ni z} K'$, where we have
\begin{equation}
\label{Lagrange-basis:scaling}
 c_{d,\degree} |K'|^{\frac{1}{2}} h_{K'}^{-1}
 \leq
 \norm{\nabla \Phi^\degree_z}_{\Leb{K'}}
 \leq
 C_{d,\degree} |K'|^{\frac{1}{2}} \rho_{K'}^{-1}.
\end{equation}
thanks to the fact that Lagrange elements of order $p$ are affine equivalent.
Moreover, all supports of basis functions associated with an element $K\in\Mesh$ are contained in the patch $\omega_K := \bigcup_{K'\cap K \neq \emptyset} K'$.

The spaces $\Polyell{\degree}$ and $\Lagr{\degree}$ are connected by the following \emph{projection} $\AvgOper{\degree} : \Polyell{\degree} \to \Lagr{\degree}$ based upon evaluating at Lagrange nodes. For every interior node $z\in\LagNodMint{\degree}$, fix some element $K_z \in \Mesh$ containing $z$ and set
\begin{equation}
\label{Averaging-def}
 \textstyle
 \AvgOper{\degree} \sigma
 :=
 \sum_{z\in\LagNodMint{\degree}} \sigma_{|K_z} (z) \Phi^p_z,
\qquad
 \sigma \in \Polyell{\degree}.
\end{equation}
Clearly, $\AvgOper{\degree}\sigma(z) = \sigma (z)$ whenever $\sigma$ is continuous at $z\in\LagNodMint{\degree}$ and so $\AvgOper{\degree}$ is actually a projection onto $\Lagr{\degree}$. The operator $\AvgOper{\degree}$ can be seen, on the one hand, as a restriction of Scott-Zhang interpolation \cite{Scott.Zhang:90} defined for broken $H^1$-functions and, on the other hand, as a simplified variant of nodal averaging in that it requires only one evaluation per degree of freedom. Nodal averaging is used in various nonconforming contexts, see, e.g., Brenner \cite{Brenner:96}, Karakashian/Pascal \cite{Karakashian.Pascal:03}, Oswald \cite{Oswald:93}. It provides conformity along with the following bound, whose splitting is in the spirit of Brenner \cite{Brenner:03}. We provide a proof for the sake of completeness.

\begin{lemma}[An $H^1_0$-bound for simplified nodal averaging]
\label{L:averaging}
Let $p\in\N$, $\sigma \in \Polyell{\degree}$ piecewise polynomial, $K \in \Mesh$ some element, and $z \in \LagNod{\degree}(K)$ a Lagrange node. If $z \not\in \partial K$, then $\AvgOper{\degree}\sigma(z) = \sigma_{|K}(z)$, else
\begin{equation*}
%
 \snorm{\sigma_{|K}(z) - \AvgOper{\degree}\sigma(z)}
 \leq
	%
 \sum \limits_{F \ni z}
  \dfrac{1}{\snorm{F}} \snorm{\int_F \Jump{\sigma}{F}}
 + C_{d,p} \sum \limits_{K' \ni z} 
  \dfrac{h_{K'}}{\snorm{K'}^{\frac{1}{2}}} \norm{\nabla \sigma}_{\Leb{K'}},
\end{equation*}
where $F$ and $K'$ vary in $\FacesM$ and $\Mesh$, respectively.
\end{lemma}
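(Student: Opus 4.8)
The plan is to analyze the two contributions to $\sigma_{|K}(z) - \AvgOper{\degree}\sigma(z)$ separately: a part coming from the jumps of $\sigma$ across faces containing $z$, and a part coming from the variation of $\sigma$ within a single element, controlled by its gradient. The first claim is immediate from the definition \eqref{Averaging-def}: if $z\not\in\partial K$, then $z$ lies in the open interior of $K$, hence $K_z = K$ (the only element containing $z$) and $\AvgOper{\degree}\sigma(z) = \sigma_{|K_z}(z) = \sigma_{|K}(z)$. So from now on assume $z\in\partial K$, and set $K^* := K_z$, the fixed element used in \eqref{Averaging-def}, so that $\AvgOper{\degree}\sigma(z) = \sigma_{|K^*}(z)$.

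First I would reduce the difference $\sigma_{|K}(z) - \sigma_{|K^*}(z)$ to a telescoping sum over a chain of elements around the star $\omega_z$. Since $K$ and $K^*$ both contain $z$, they can be joined by a sequence $K = K_0, K_1, \dots, K_m = K^*$ of elements in $\omega_z$ with $K_{j-1}\cap K_j$ a face $F_j \in \FacesM$ containing $z$ (the number of steps $m$ being bounded by $C_{d,p}$ via shape regularity, since $\#\{K'\ni z\}\Cleq 1$). Writing
\[
\sigma_{|K}(z) - \sigma_{|K^*}(z) = \sum_{j=1}^m \bigl(\sigma_{|K_{j-1}}(z) - \sigma_{|K_j}(z)\bigr) = \pm\sum_{j=1}^m \Jump{\sigma}{F_j}(z),
\]
it remains to bound each $\snorm{\Jump{\sigma}{F_j}(z)}$. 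The point is to split this pointwise jump into its face-average and a remainder: for a face $F\ni z$,
\[
\Jump{\sigma}{F}(z) = \frac{1}{\snorm{F}}\int_F \Jump{\sigma}{F} + \Bigl(\Jump{\sigma}{F}(z) - \frac{1}{\snorm{F}}\int_F \Jump{\sigma}{F}\Bigr).
\]
The first term, summed over $F\ni z$, yields exactly the first sum in the asserted bound (with coefficient $1$, no constant — this is why the splitting is done at the level of face averages rather than, say, elementwise averages).

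The main work — and the step I expect to be the principal obstacle — is estimating the remainder $\Jump{\sigma}{F}(z) - \snorm{F}^{-1}\int_F \Jump{\sigma}{F}$ by $C_{d,p}\sum_{K'\ni z} h_{K'}\snorm{K'}^{-1/2}\norm{\nabla\sigma}_{\Leb{K'}}$. I would do this by treating each of the two one-sided traces separately: for an element $K'$ with $F\in\Faces{K'}$, the quantity $\sigma_{|K'}(z) - \snorm{F}^{-1}\int_F \sigma_{|K'}$ is the value at a vertex minus the face-average of a polynomial of degree $\le p$ on $F$, hence is controlled by $h_F \norm{\nabla_F(\sigma_{|F})}_{L^\infty(F)}$, or more conveniently, after passing to the reference simplex and using norm equivalence on the finite-dimensional space $\Poly{\degree}$, by $h_F\snorm{F}^{-1/2}\norm{\nabla_F \sigma}_{\Leb{F}}$; a discrete trace (inverse) inequality relating the $(d-1)$-dimensional face norm to the $d$-dimensional element norm, $h_F^{1/2}\snorm{F}^{-1/2}\norm{\nabla\sigma}_{\Leb{F}} \Cleq h_{K'}^{1/2}\snorm{K'}^{-1/2}\norm{\nabla\sigma}_{\Leb{K'}}$ — again just scaling plus finite-dimensional norm equivalence — then converts this to the element gradient, with constants depending only on $d$, $p$, and $\Shape_\Mesh$ through \eqref{adjacent-elements}. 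Applying this to both sides of $F_j$ and collecting the (boundedly many) terms over the chain, every element $K'\ni z$ appears a bounded number of times, which produces the claimed sum over $K'\ni z$ and completes the estimate. (The boundary-face convention is handled by the same argument with the one-sided definition, since then $\Jump{\sigma}{F} = \sigma_{|K'}$ and the reasoning is unchanged.)
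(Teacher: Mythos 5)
Your proof follows the paper's own argument almost step for step: the same telescoping over a face-connected chain in the star $\omega_z$ reduces everything to the pointwise jumps $\Jump{\sigma}{F}(z)$ for $F\ni z$, and the same splitting of each jump into its face mean plus a remainder produces the two sums in the claim. The only divergence is in how the remainder $\sigma_{|K'}(z)-|F|^{-1}\int_F\sigma_{|K'}$ is estimated: the paper additionally inserts the element mean $|K'|^{-1}\int_{K'}\sigma$ and invokes the trace identity together with a Poincar\'e-type trace bound (both with explicit, shape-independent constants), whereas you stay on the face and use an $L^\infty$--$L^2$ inverse estimate followed by a discrete trace inequality. Both routes work, but two small points deserve attention. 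First, the lemma asserts a constant $C_{d,p}$ with no dependence on $\Shape_\Mesh$; your version of the trace step, written with $h_F^{1/2}$, $h_{K'}^{1/2}$ and an appeal to \eqref{adjacent-elements}, does introduce such a dependence. This is harmless for the way the lemma is used later (the final bounds are $C_{d,\Shape_\Mesh}$ anyway), but to match the statement you should use the sharp polynomial trace inequality $\norm{w}_{\Leb{F}}\leq C_{d,p}\,|F|^{1/2}|K'|^{-1/2}\norm{w}_{\Leb{K'}}$, which removes $\Shape_\Mesh$ entirely. Second, your chain is set up to terminate at $K^*=K_z$, but $K_z$ is only defined for interior Lagrange nodes; for $z\in\partial\Omega$ one has $\AvgOper{\degree}\sigma(z)=0$, and the chain must instead end at an element carrying a boundary face $F\ni z$, the last value being absorbed by the convention $\Jump{\sigma}{F}=\sigma_{|K}$ on boundary faces --- your closing parenthesis gestures at this, but the case should be handled explicitly since the operator genuinely behaves differently there.
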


\begin{proof}
If $z\not\in\partial K$, then the non-overlapping of elements in $\Mesh$ implies that $K_z$ in the definition of $\AvgOper{\degree}$ has to coincide with $K$ and the `then'-part of the claim is verified.  In order to show  the `else'-part, we start by claiming that, for any $z \in \partial K$,
\begin{equation}
\label{Averaging-est-1}
 \snorm{\sigma_{|K}(z) - \AvgOper{\degree}\sigma(z)}
%
\leq
\sum \limits_{F  \ni z} 
\snorm{\Jump{\sigma}{F}(z)}. 
\end{equation}
To verify this, we shall exploit that $\Mesh$ has face-connected stars in the sense of \cite{Veeser:16}, distinguishing the cases $z\in\Omega$ and $z\in\partial\Omega$.
If $z\in\Omega$ is an interior node, we choose a path $(K_j')_{j=0}^n$ in $\omega_z$ such that $K_0' = K$, $K_n' = K_z$ and $ K_{j-1}' \cap K_j' =: F_j \in \FacesMint$ for $ j=1, \dots n$. Then \eqref{Averaging-est-1} follows by bounding the telescopic sum $\sigma_{|K}(z) - \AvgOper{\degree}(z) = \sum_{j=1}^{n} \sigma_{|K_{j-1}}(z) - \sigma_{|K_j}(z)$ with the triangle inequality, independently of the choice of the path and $K_z$.
If $z\in\partial\Omega$ a boundary node, we proceed similarly but terminate the path with an element $K_b\in\Mesh$ that has a boundary face $F \in \FacesMbnd$ and use the identity $\sigma_{|K_b}(z) - A_p(z) = \sigma_{|K_b}(z) = \Jump{\sigma}{F}(z)$.

To derive the claimed inequality from \eqref{Averaging-est-1}, we need to bound each jump at $z$ suitably.  To this end, we consider again two cases, $F \in \FacesMint$ and $F \in \FacesMbnd$, and start with the first case. Let $K_1, K_2 \in \Mesh$ be the two elements such that $F = K_1 \cap K_2$. Inserting the face means $f_j := |F|^{-1} \int_F \sigma_{|K_j}$ as well as the element means
$k_j := |K_j|^{-1} \int_{K_j} \sigma$ and using an inverse estimate in $\Poly{\degree}(F)$, we deduce
\begin{equation}
\label{Averaging-inv-est}
 \snorm{\Jump{\sigma}{F}(z)}
 \leq
 \dfrac{1}{\snorm{F}} \snorm{\int_F \Jump{\sigma}{F}}
 + 
 \sum_{j=1,2} \left(
  | f_j - k_j |
  +
  \dfrac{C_{d,\degree}}{|F|^{\frac{1}{2}}} \norm{\sigma_{|K_j} -k_j}_{\Leb{F}}
 \right).
\end{equation}  
For $j=1,2$, the trace identity, see, e.g., \cite[Proposition~4.2]{Veeser.Verfuerth:09}, gives
\begin{equation*}
%
 \snorm{f_j-k_j} 
 \leq
 \dfrac{h_{K_j}}{d \snorm{K_j}} \norm{\nabla \sigma}_{L^1(K_j)}
 \leq
 \dfrac{h_{K_j}}{d \snorm{K_j}^{\frac{1}{2}}} \norm{\nabla \sigma}_{\Leb{K_j}}, , 
\end{equation*}
while \cite[Lemma~3]{Veeser:16}, which is a combination of the trace identity and the Poincar\'e inequality, provides
\begin{equation*}
%
  \snorm{F}^{-\frac{1}{2}}
  \norm{\sigma_{|K_j} - k_j}_{\Leb{F}} 
  \leq
  \sqrt{ \dfrac{1}{\pi^2} + \dfrac{2}{\pi d} } 
   \dfrac{h_{K_j}}{\snorm{K_j}^{\frac{1}{2}}}
   \norm{\nabla \sigma}_{\Leb{K_j}}.
\end{equation*}
Inserting the last two inequalities in \eqref{Averaging-inv-est}, we arrive at
\begin{subequations}
\label{Averaging-est-2}
\begin{equation}
 \snorm{\Jump{\sigma}{F}(z)}
 \leq
 \dfrac{1}{\snorm{F}} \snorm{\int_F \Jump{\sigma}{F}}
 + 
 C_{d,\degree} \sum_{j=1}^{2} \dfrac{h_{K_j}}{\snorm{K_j}^{\frac{1}{2}}}
  \norm{\nabla \sigma}_{\Leb{K_j}}
\end{equation}
in this case. If, instead, $F \in \FacesMbnd$, we denote by $K\in\Mesh$ the element with $F = K \cap \partial \Domain$ and, similarly, using the means  $f := |F|^{-1} \int_F \sigma_{|K}$ and $k := |K|^{-1} \int_{K} \sigma$, obtain
\begin{equation}
 \snorm{\Jump{\sigma}{F}(z)} 
 \leq
 \dfrac{1}{\snorm{F}} \snorm{\int_F \Jump{\sigma}{F}}
  + 
  C_{d,\degree} \dfrac{h_{K}}{\snorm{K}^{\frac{1}{2}}}
   \norm{\nabla \sigma}_{\Leb{K}}
\end{equation}
\end{subequations}
Inserting \eqref{Averaging-est-2} into \eqref{Averaging-est-1} then finishes the proof.
\end{proof}

\subsection{A quasi-optimal Crouzeix-Raviart method}
\label{S:CR-qopt}
%
In order to prove the results illustrated in the introduction \S\ref{S:intro}, we consider the approximation with Crouzeix-Raviart elements of the Poisson problem
\begin{equation}
\label{Poisson}
- \Delta u = f \text{ in }\Omega,
\qquad
u = 0 \text{ on }\partial\Omega,
\end{equation}
where $\Omega$ and $\Mesh$ are as in \S\ref{S:setting-FEM}, with $d\geq2$ and $\#\Mesh>1$. A function $w: \Domain \to \R$ is piecewise $H^1$ over $\Mesh$ and we write $w \in H^1(\Mesh)$ whenever $w_{|K} \in H^1(K)$ for all $K \in \Mesh$. The piecewise gradient $\GradM$ acts on $w$ as follows: $(\GradM w)_{|K} := \nabla(w_{|K})$ for all $K \in \Mesh$. Introducing the bilinear form 
$a_\Mesh:H^1(\Mesh)\times H^1(\Mesh) \to \R$ by
\begin{equation}
\label{aext-M}
 a_\Mesh(w_1, w_2)
 :=
 \int_{\Domain} \GradM w_1 \cdot \GradM w_2,
\end{equation}
we want to apply Corollary~\ref{C:ME-right-inverse} with the following setting:
\begin{equation}
\label{CR-setting}
\begin{gathered}
 V = \SobH{\Domain},
 \quad
 S = \CRS{} = \left\lbrace s \in \Polyell{1} \mid 
 \forall F \in \FacesM \; \int_F \Jump{s}{F}  = 0 \right\rbrace,
\\
 \aext = a_\Mesh{}_{|\Vext\times\Vext} \text{ with }  \Vext = \SobH{\Domain} + \CRS{},
\end{gathered}
\end{equation}
where $\aext_{|V\times V}$ provides a weak formulation of $-\Delta$.
Before embarking on the construction of the smoothing operator $E$, let us recall some relevant properties of $\CRS{}$; see, e.g., \cite{Brenner.Scott:08}. The characterization of $\CRS{}$ in terms of jumps is a consequence of the midpoint rule: whenever $s\in\CRS{}$ and $F\in\Faces{K}$, then $\int_F s_{|K} = s(m_F)$, where $m_F$ is the midpoint of $F$. Hence, for all $s \in \CRS{}$, the integral mean value $\int_F s$, $F \in \FacesM$, is well-defined and vanishes if $F \in \FacesMbnd$. The bilinear form $\aext$ is therefore a scalar product and induces the norm $\norm{\cdot} = \norm{\GradM \cdot}_{\Leb{\Domain}}$. Moreover, the functionals $s \mapsto \int_F s$, $F \in \FacesMint$, form a set of degrees of freedom for $\CRS{}$. We write 
$\Psi_F$, $F \in \FacesMint$, for the associated nodal basis satisfying $\int_{F'} \Psi_F = \delta_{F,F'}$ for all $F,F' \in \FacesMint$. The support of each basis function $\Psi_F$ is the union $\omega_F$ of the two elements sharing $F$. Finally, we have $\CRS{} \cap \SobH{\Domain} = \Lagr{1}$, which is a strict subspace of $\CRS{}$ as $\#\Mesh>1$.

The next lemma characterizes the right inverses of the Crouzeix-Raviart projection $\Ritz_{\crs}$, i.e.\ the $\aext$-orthogonal projection of $\Vext$ onto $\CRS{}$. 
\begin{lemma}[Right inverses of CR projection]
\label{L:CR-Rinv}
Let $E: \CRS{} \to \SobH{\Domain}$ be a linear operator. Then we have 
\begin{equation*}
 \Ritz_{\crs} E = \id_{\CRS{}}
	\quad
 \Longleftrightarrow
	\quad
 \forall \sigma \in \CRS{}, F \in \FacesMint
  \;
 \int_F E \sigma = \int_F \sigma.
\end{equation*}
\end{lemma}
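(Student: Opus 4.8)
The statement to prove is an equivalence: $\Ritz_{\crs} E = \id_{\CRS{}}$ if and only if $\int_F E\sigma = \int_F \sigma$ for all $\sigma \in \CRS{}$ and all $F \in \FacesMint$. The natural approach is to exploit the variational characterization of the orthogonal projection $\Ritz_{\crs}$ together with the degrees of freedom of $\CRS{}$. Recall that for any $w \in \Vext$, the projection $\Ritz_{\crs} w$ is the unique element of $\CRS{}$ satisfying $\aext(\Ritz_{\crs} w, \tau) = \aext(w, \tau)$ for all $\tau \in \CRS{}$. So the condition $\Ritz_{\crs} E\sigma = \sigma$ for a given $\sigma$ is equivalent to $\aext(E\sigma - \sigma, \tau) = 0$ for all $\tau \in \CRS{}$, i.e. $\aext(E\sigma, \tau) = \aext(\sigma, \tau)$ for all $\tau \in \CRS{}$.

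**Key identity connecting the broken gradient form to face integrals.** The crux of the argument is the integration-by-parts / "magic" property of Crouzeix-Raviart functions: for $w \in \SobH{\Domain}$ (or more generally $w \in \Vext$) and $\tau \in \CRS{}$, one has
\begin{equation*}
 \aext(w, \tau) = \int_\Omega \GradM w \cdot \GradM \tau = -\sum_{K\in\Mesh}\int_K w\,\Delta(\tau_{|K}) + \sum_{F\in\FacesMint}\int_F \Jump{\nabla\tau\cdot n}{F}\,\{\!\!\{w\}\!\!\}_F,
\end{equation*}
but since $\tau$ is piecewise affine, $\Delta(\tau_{|K}) = 0$, and since the test side should be reorganized to land on the basis $\Psi_F$, it is cleaner to argue directly on the nodal basis: write $\tau = \sum_{F\in\FacesMint} c_F \Psi_F$ where $c_F = \int_F \tau$. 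Since $\Ritz_{\crs}$ is linear and the $\Psi_F$ span $\CRS{}$, the condition $\Ritz_{\crs} E\sigma = \sigma$ for all $\sigma$ is equivalent to $\aext(E\sigma - \sigma, \Psi_F) = 0$ for all $\sigma\in\CRS{}$ and all $F\in\FacesMint$. Then I would establish the pointwise identity $\aext(v, \Psi_F) = \snorm{F}^{-1}\cdot(\text{something})$... — more precisely, the real content is the claim that for any $v \in \Vext$ with $\int_{F'} v$ well-defined for all faces, $\aext(v,\Psi_F)$ depends on $v$ only through the face means $\{\int_{F'} v\}_{F'}$, and in fact there is a matrix $G$ (the "CR stiffness matrix") with $\aext(v,\Psi_F) = \sum_{F'} G_{F,F'}\int_{F'} v$, which is invertible precisely because $\aext$ is a scalar product on $\CRS{}$ and the $\int_{F'}(\cdot)$ are the dual basis coordinates.

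**Finishing the equivalence.** Given that structure: $\aext(E\sigma - \sigma,\Psi_F) = \sum_{F'} G_{F,F'}\left(\int_{F'} E\sigma - \int_{F'}\sigma\right)$. Since $G$ is invertible, this vanishes for all $F$ if and only if $\int_{F'} E\sigma = \int_{F'}\sigma$ for all $F'\in\FacesMint$, which is the right-hand condition. Quantifying over $\sigma\in\CRS{}$ then gives the stated biconditional. The boundary faces play no role because both $E\sigma \in \SobH{\Domain}$ and $\sigma \in \CRS{}$ have vanishing face means on $\FacesMbnd$, so the condition there is automatic; this is why only interior faces appear on the right.

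**Main obstacle.** The delicate point is justifying that $\aext(v, \Psi_F)$ for $v \in \SobH{\Domain}$ (the range of $E$) really can be computed via face means only — i.e. the identity $\int_\Omega \nabla v\cdot\GradM\Psi_F = \sum_{F'} G_{F,F'}\int_{F'} v$. For $v\in\CRS{}$ this is just the statement that $\int_{F'}(\cdot)$ are dual coordinates and $G$ is the Gram matrix, which is immediate and invertible since $\aext$ is a scalar product. For $v\in\SobH{\Domain}\subseteq\Vext$ one needs that $\aext(v,\tau) = \aext(\Ritz_{\crs} v,\tau)$ and that $\Ritz_{\crs} v$ is determined by the same face means — but here one must be slightly careful: a priori $\Ritz_{\crs} v$ need not have the same face means as $v$. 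The cleanest route avoids $G$ entirely: simply observe that $\Ritz_{\crs} E\sigma = \id$ $\iff$ $\aext(E\sigma,\tau)=\aext(\sigma,\tau)$ $\forall\tau\in\CRS{}$, then use the well-known elementwise integration-by-parts identity $\int_\Omega \GradM w\cdot\GradM\tau = \sum_{F\in\FacesMint}\Jump{\nabla\tau\cdot n}{F}\int_F w$ valid for $w\in\Vext$, $\tau\in\CRS{}$ (this uses that $\Jump{\nabla\tau\cdot n}{F}$ is constant on each $F$ and that $\tau$ is harmonic elementwise and that $\{\!\!\{w\}\!\!\}_F$ can be replaced by either one-sided trace after a cancellation — for $w\in\SobH{\Domain}$ the jump of $w$ vanishes), to conclude that $\aext(E\sigma-\sigma,\tau)=\sum_{F\in\FacesMint}\Jump{\nabla\tau\cdot n}{F}\int_F(E\sigma-\sigma)$. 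Choosing $\tau=\Psi_F$ and using that the normal-jump data $\{\Jump{\nabla\Psi_F\cdot n}{F'}\}$ form an invertible system (equivalently $\aext$ is nondegenerate on $\CRS{}$) yields the claim. I would organize the written proof around this identity, proving it first as a short sublemma.
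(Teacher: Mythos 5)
Your proposal is correct and, in its final form, follows essentially the same route as the paper: both reduce the statement to the piecewise integration-by-parts identity $a_\Mesh(s,w)=\sum_{F\in\FacesMint}\Jump{\GradM s\cdot n}{F}\int_F w$ (valid since $\GradM s$ is piecewise constant and boundary face means vanish) and then use that the interior face means are degrees of freedom for $\CRS{}$, so that $\Ritz_{\crs}$ is characterized by preservation of face means. The detour through the Gram matrix $G$ in your middle paragraph is unnecessary, as you yourself conclude, but the argument you settle on is sound.
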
 
\begin{proof}
For any $v \in \SobH{\Domain}$ and $s \in \CRS{}$, the definition of $\Ritz_{\crs}$ and piecewise integration by parts yields
\begin{equation*}
 0 
 =
 a_\Mesh  (s, v - \Ritz_{\crs} v)
 =
 \sum \limits_{K \in \Mesh}
  \int_{ \partial K} \dfrac{\partial s}{\partial n_K} (v - \Ritz_{\crs} v)
 = 
 \sum \limits_{F \in \FacesMint}
  \Jump{\GradM s \cdot n}{F}\int_F (v - \Ritz_{\crs} v) 
\end{equation*} 
thanks to the fact that $\GradM s$ is piecewise constant and $\int_F v = 0 = \int_{F} \Ritz_{\crs} v$ for every $F \in \FacesMbnd$. Since the orthogonal projection $\Ritz_{\crs} v$ is unique and the averages over interior faces are degrees of freedom for $\CRS{}$, we obtain that
\begin{equation*}
 \forall F \in \FacesMint \qquad
 \int_F \Ritz_{\crs} v 
 =
 \int_F v
\end{equation*}
uniquely determines $\Ritz_{\crs} v$. This characterization readily implies the claimed equivalence.
\end{proof}

The normalized face bubbles
\begin{equation}
\label{CR-face-bubble}
 \bar{\Phi}_F
 :=
 \frac{(2d)!}{d! \, |F|} \Phi_F
\quad\text{with}\quad
 \Phi_F
 :=
 \prod \limits_{z \in \LagNod{1}(F)} \Phi^1_z
 =
 \frac{1}{d^d} \Phi^d_{m_F},
\qquad
 F \in \FacesMint,
\end{equation}
may be viewed as $\SobH{\Domain}$-counterparts of the nodal basis functions $\Psi_F$, $F\in\FacesMint$. Indeed, they satisfy $\Psi_F\in\SobH{\Domain}$ and $\int_{F'} \Phi_F = \delta_{F,F'}$ for all $F'\in\FacesMint$ due to \eqref{intregation-bary-coord}. We thus readily see that the linear operator $\BubbOper: \CRS{} \to \SobH{\Domain}$ given by 
\begin{equation}
\label{CR-E1}
 \BubbOper \sigma
 :=
 \sum_{F \in \FacesMint} \left( \int_{F} \sigma \right) \bar{\Phi}_F
\end{equation}
is well-defined and a right inverse of the Crouzeix-Raviart projection $\Ritz_{\crs}$. Unfortunately, the bubble smoothing operator $\BubbOper$ is not uniformly stable under refinement; see Remark~\ref{R:E1-norm} below. We therefore introduce the following variant that is stabilized with simplified nodal averaging.

\begin{proposition}[Stable right inverse of CR projection]
\label{P:stable-CR-Rinv}
The linear operator  $E_1:\CRS{}\to\SobH{\Domain}$ given by
\begin{equation*}
 E_1\sigma := \AvgOper{1}\sigma + \BubbOper(\sigma - \AvgOper{1}\sigma),
\end{equation*}
is invariant on $\Lagr{1}$, a right inverse of the Crouzeix-Raviart projection $\Ritz_{\crs}$, and $\SobH{\Domain}$-stable with stability constant $ \leq C_{d,\Shape_\Mesh}$.
\end{proposition}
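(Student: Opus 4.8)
The plan is to establish the three assertions in turn; the first two are immediate, while the stability bound is the substantial one and rests on Lemma~\ref{L:averaging} together with the scaling relations of \S\ref{S:setting-FEM}. \emph{Invariance on $\Lagr{1}$ and right inverse property.} Since $\AvgOper{1}$ is a projection onto $\Lagr{1}$, any $s\in\Lagr{1}$ satisfies $\AvgOper{1}s=s$, whence $s-\AvgOper{1}s=0$ and $E_1s=s+\BubbOper(0)=s$; combined with $\CRS{}\cap\SobH{\Domain}=\Lagr{1}$ and Corollary~\ref{C:ME-right-inverse}, this also identifies $\app_{E_1}$ as a nonconforming Galerkin method. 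For the right inverse property I would argue abstractly: $\Ritz_{\crs}$ restricted to $\CRS{}$ is the identity (being a projection onto $\CRS{}$), $\AvgOper{1}\sigma\in\Lagr{1}\subseteq\CRS{}$, and $\BubbOper$ is a right inverse of $\Ritz_{\crs}$, so
\[
 \Ritz_{\crs}E_1\sigma = \Ritz_{\crs}\AvgOper{1}\sigma + \Ritz_{\crs}\BubbOper(\sigma-\AvgOper{1}\sigma) = \AvgOper{1}\sigma + (\sigma-\AvgOper{1}\sigma) = \sigma
\]
for all $\sigma\in\CRS{}$. (Equivalently, one checks the local condition $\int_F E_1\sigma=\int_F\sigma$, $F\in\FacesMint$, of Lemma~\ref{L:CR-Rinv}.)

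\emph{Stability.} Set $\tau:=\sigma-\AvgOper{1}\sigma\in\CRS{}$. Since $E_1\sigma\in\SobH{\Domain}$, a triangle inequality reduces the claim to bounding $\norm{\GradM\tau}_{\Leb{\Domain}}$ and $\norm{\nabla\BubbOper\tau}_{\Leb{\Domain}}$ by $C_{d,\Shape_\Mesh}\norm{\GradM\sigma}_{\Leb{\Domain}}$, since the bound on $\norm{\nabla\AvgOper{1}\sigma}_{\Leb{\Domain}}=\norm{\GradM(\sigma-\tau)}_{\Leb{\Domain}}$ then follows by one more triangle inequality. On a fixed $K\in\Mesh$ one has $\tau_{|K}=\sum_{z\in\LagNod{1}(K)}(\sigma_{|K}(z)-\AvgOper{1}\sigma(z))\,\lambda_z^K$, so $\norm{\nabla\tau}_{\Leb{K}}\le\sum_z|\sigma_{|K}(z)-\AvgOper{1}\sigma(z)|\,\norm{\nabla\lambda_z^K}_{\Leb{K}}$. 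The decisive point is that $\sigma\in\CRS{}$ forces $\int_F\Jump{\sigma}{F}=0$ for \emph{every} $F\in\FacesM$, so the jump contributions in Lemma~\ref{L:averaging} vanish and $|\sigma_{|K}(z)-\AvgOper{1}\sigma(z)|\Cleq\sum_{K'\ni z}h_{K'}|K'|^{-1/2}\norm{\nabla\sigma}_{\Leb{K'}}$. Inserting $\norm{\nabla\lambda_z^K}_{\Leb{K}}\Cleq|K|^{1/2}\rho_K^{-1}$ from \eqref{Lagrange-basis:scaling} and using \eqref{adjacent-elements} to compare $h_{K'},|K'|,\rho_K$ for $K'\cap K\ne\emptyset$ yields $\norm{\nabla\tau}_{\Leb{K}}\Cleq\norm{\GradM\sigma}_{\Leb{\omega_K}}$, and summing the squares over $K$ with the finite overlap of the patches $\omega_K$ gives $\norm{\GradM\tau}_{\Leb{\Domain}}\Cleq\norm{\GradM\sigma}_{\Leb{\Domain}}$. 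For the bubble term, on each $K$ only the $d+1$ faces $F\subset\partial K$ contribute to $\BubbOper\tau$, so $\norm{\nabla\BubbOper\tau}_{\Leb{K}}\le\sum_{F\subset\partial K}\snorm{\int_F\tau}\,\norm{\nabla\bar{\Phi}_F}_{\Leb{K}}$. Because $\tau_{|K}$ is affine, $\snorm{\int_F\tau}=|F|\,\snorm{\tau_{|K}(m_F)}\Cleq|F|\,h_K|K|^{-1/2}\norm{\GradM\sigma}_{\Leb{\omega_K}}$ by the estimate just derived, while \eqref{CR-face-bubble} and \eqref{Lagrange-basis:scaling} give $\norm{\nabla\bar{\Phi}_F}_{\Leb{K}}\Cleq|F|^{-1}|K|^{1/2}\rho_K^{-1}$; the two factors $|F|$ cancel and $h_K/\rho_K=\Shape_K\le\Shape_\Mesh$, so the product is $\Cleq\norm{\GradM\sigma}_{\Leb{\omega_K}}$, and again summing over $K$ with finite overlap closes the estimate. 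Collecting constants gives $\opnorm{E_1}{\CRS{}}{\SobH{\Domain}}\le C_{d,\Shape_\Mesh}$.

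\emph{Main obstacle.} The nontrivial part is the stability bound, and within it the bubble term: one must exploit that the operator $\BubbOper$, which by Remark~\ref{R:E1-norm} is \emph{not} uniformly stable, is applied here to the ``small'' quantity $\sigma-\AvgOper{1}\sigma$, so that the singular factor $|F|^{-1}$ in $\bar{\Phi}_F$ is exactly absorbed by the factor $|F|$ in the face mean $\int_F(\sigma-\AvgOper{1}\sigma)$. This in turn hinges on the fact, special to Crouzeix--Raviart functions, that all jump means $\int_F\Jump{\sigma}{F}$ in Lemma~\ref{L:averaging} vanish, which is precisely what prevents $\sigma-\AvgOper{1}\sigma$ from degrading under refinement. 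The remaining ingredients — affine-equivalence scalings, the mesh comparability \eqref{adjacent-elements}, and finite overlap of patches — are routine.
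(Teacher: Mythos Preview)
Your proof is correct and follows essentially the same approach as the paper: invariance via $\AvgOper{1}$ being a projection, the right-inverse property (you use the abstract fact that $\BubbOper$ is already a right inverse, while the paper re-verifies the face-mean condition of Lemma~\ref{L:CR-Rinv}; both are fine), and stability via the triangle-inequality decomposition into $\norm{\GradM(\sigma-\AvgOper{1}\sigma)}$ and $\norm{\nabla\BubbOper(\sigma-\AvgOper{1}\sigma)}$, each bounded locally through Lemma~\ref{L:averaging} (with the jump terms vanishing for $\sigma\in\CRS{}$), the scaling \eqref{Lagrange-basis:scaling}, and \eqref{adjacent-elements}. For the bubble term you invoke the midpoint rule $\int_F\tau=|F|\,\tau_{|K}(m_F)$ where the paper instead expands $\int_F\tau$ as $d^{-1}|F|\sum_{z\in\LagNod{1}(F)}(\sigma_{|K}(z)-\AvgOper{1}\sigma(z))$; this is the same computation in slightly different dress.
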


\begin{proof}
The linear operator $E_1$ well-defined owing to $R(\AvgOper{1}) = \Lagr{1} \subseteq \CRS{}$ and  provides $\SobH{\Domain}$-smoothing, because $\Phi_z^1\in\SobH{\Domain}$ for $z \in \LagNodMint{1}$ and $\Phi_F\in\SobH{\Domain}$ for $F \in \FacesMint$. Owing to $\AvgOper{1 | \Lagr{1}} = \id_{\Lagr{1}}$, we have
$E_1{}_{|\Lagr{1}} = \id_{\Lagr{1}}$ on the conforming part $\Lagr{1} = \CRS{} \cap \SobH{\Domain}$  of the Crouzeix-Raviart space. Furthermore, $E_1$ is a right inverse of the Crouzeix-Raviart projection in view of Lemma~\ref{L:CR-Rinv}. Indeed, by rearranging terms and since $\BubbOper$ preserves face means, we find
\begin{equation}
\label{CR-E-Rinv}
\int_F E_1\sigma
=
\int_F \BubbOper \sigma
+
\underbrace{\int_F (\AvgOper{1}\sigma - \BubbOper \AvgOper{1}\sigma)}_{=0}
=
\int_F \sigma.
\end{equation}

It remains to bound $\opnorm{E_1}{\CRS{}}{\SobH{\Domain}}$. Given $\sigma \in \CRS{}$, we may write
\begin{equation*}
\label{CR-E-bound1}
\norm{\nabla E_1 \sigma}_{\Leb{\Domain}}
\leq
\norm{\GradM \sigma}_{\Leb{\Domain}}
+
\norm{\GradM(\sigma - \AvgOper{1} \sigma)}_{\Leb{\Domain}}
+
\norm{\nabla \BubbOper(\sigma - \AvgOper{1} \sigma)}_{\Leb{\Domain}}
\end{equation*}
so that  we have to bound the second and third term of the right-hand side by the first one. In both cases, we first establish a local bound for $K \in \Mesh$. For the second term, we combine Lemma \ref{L:averaging}, \eqref{adjacent-elements}, and \eqref{Lagrange-basis:scaling} to derive
\begin{equation}
\label{CR-E-bound2}
\begin{split}
\|\nabla (\sigma & - \AvgOper{1} \sigma)\|_{\Leb{K}} 
\leq
\sum \limits_{z \in \LagNod{1}(K)}
\snorm{\sigma_{|K}(z) - \AvgOper{1}\sigma(z)} 
\norm{\nabla\Phi_z^1}_{\Leb{K}}
\\
&\leq 
C_{d} \sum \limits_{z \in \LagNod{1}(K)} \;
\sum \limits_{K' \in \Mesh, K' \ni z}
\dfrac{h_{K'}}{\rho_K}\dfrac{\snorm{K}^{\frac{1}{2}}}{\snorm{K'}^{\frac{1}{2}}} \norm{\nabla \sigma}_{\Leb{K'}}
\Cleq
\norm{\GradM \sigma}_{\Leb{\omega_K}}.
\end{split}
\end{equation}
For the third term, inserting $\int_F \Phi^1_z = d^{-1}|F|$ and \eqref{CR-face-bubble} into \eqref{CR-E1} yields 
\begin{equation*}
\BubbOper(\sigma - \AvgOper{1} \sigma)_{|K} 
=
\frac{(2d)!}{d! \, d^{d+1}}
\sum \limits_{F \in \Faces{K}}
\sum \limits_{z \in \LagNod{1}(F)}
\big[ \sigma_{|K}(z) -  \AvgOper{1}\sigma(z) \big]  \Phi^d_{m_F}.
\end{equation*}
Hence, another combination of Lemma \ref{L:averaging}, \eqref{adjacent-elements}, and \eqref{Lagrange-basis:scaling} leads to
\begin{equation}
\label{CR-E-bound3}
\norm{\nabla \BubbOper (\sigma - \AvgOper{1} \sigma)}_{\Leb{K}}
\Cleq
\norm{\GradM \sigma}_{\Leb{\omega_K}}.
\end{equation}
We conclude by summing \eqref{CR-E-bound2} and \eqref{CR-E-bound3} over all mesh elements $K\in\Mesh$, observing that the number of elements in each star $\omega_K$ is $\leq C_{d,\Shape_\Mesh}$.
\end{proof}

Setting $E=E_1$ in \eqref{mod-CR-for-Poisson}, we obtain a \emph{new Crouzeix-Raviart method}, which we refer to as $\app_{\crs}$. Notice that the assembling of its load vector is computationally feasible in the following sense:
\begin{itemize}
\item it suffices to know the evaluations $\langle f, \Phi_z^1 \rangle$, $z \in \LagNodMint{1}$, and $\langle f, \Phi_F \rangle$, $F \in \FacesMint$,
\item it is local in that $\mathrm{supp}\, E_1 \Psi_F \subseteq \omega_{K_1} \cup \omega_{K_2}$, where $K_1,K_2 \in \Mesh$ are the two elements containing the interior face $F \in \FacesMint$. 
\end{itemize}
The method $\app_\crs$ distinguishes from the classical Crouzeix-Raviart method by the following property.

\begin{theorem}[Quasi-optimality of $\app_\crs$]
\label{T:MCR-qopt}
The method $\app_\crs$ is a $\norm{\GradM\cdot}$-quasi-optimal nonconforming Galerkin method for \eqref{Poisson} with $\Cqopt\leq C_{d,\Shape_\Mesh}$.
\end{theorem}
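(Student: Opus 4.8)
The plan is to assemble the result from the machinery already in place. First I would invoke Corollary~\ref{C:ME-right-inverse} with the setting \eqref{CR-setting}: by Proposition~\ref{P:stable-CR-Rinv} the operator $E_1$ is a right inverse of the Crouzeix-Raviart projection $\Ritz_{\crs}$, it maps $\CRS{}$ into $V=\SobH{\Domain}$, it is injective (a right inverse is automatically injective), and it fixes the conforming part $\Lagr{1}=\CRS{}\cap\SobH{\Domain}$ pointwise. Hence $\app_\crs=(\CRS{},\aext_{|\CRS{}\times\CRS{}},E_1)$ is, in the terminology of the corollary, a nonconforming Galerkin method, and it is $\norm{\GradM\cdot}$-quasi-optimal with $\Cqopt=\Cstab=\opnorm{E_1}{\CRS{}}{\SobH{\Domain}}$.

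Second, I would reconcile the bilinear form. Corollary~\ref{C:ME-right-inverse} produces the method with the symmetric form $b_{E_1}=\aext_{|\CRS{}\times\CRS{}}$, i.e.\ $b_{E_1}(s,\sigma)=\int_\Domain\GradM s\cdot\GradM\sigma$, whereas the introduction wrote the new Crouzeix-Raviart method as \eqref{mod-CR-for-Poisson} with $E=E_1$, which carries exactly this form. So $\app_\crs$ as defined right before the theorem coincides with the method of the corollary; one should simply note that the two discrete problems \eqref{mod-CR-for-Poisson} and \eqref{mod2-CR-for-Poisson} agree here because $E_1$ is a right inverse of $\Ritz_{\crs}$, via the identity $\aext(s,E_1\sigma)=\aext(s,\Ritz_{\crs}E_1\sigma)=\aext(s,\sigma)$ for $s,\sigma\in\CRS{}$.

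Third, the quantitative claim $\Cqopt\leq C_{d,\Shape_\Mesh}$ follows by combining the equality $\Cqopt=\opnorm{E_1}{\CRS{}}{\SobH{\Domain}}$ from the corollary with the stability bound $\opnorm{E_1}{\CRS{}}{\SobH{\Domain}}\leq C_{d,\Shape_\Mesh}$ established in Proposition~\ref{P:stable-CR-Rinv}. There is essentially no remaining computation; the only point requiring a word of care is that the abstract energy norm $\norm{\cdot}=\sqrt{\aext(\cdot,\cdot)}$ on $\Vext=\SobH{\Domain}+\CRS{}$ is precisely $\norm{\GradM\cdot}_{\Leb{\Domain}}$, which was observed when introducing the setting \eqref{CR-setting}, so that quasi-optimality in the extended energy norm is literally quasi-optimality in the piecewise energy norm, and \eqref{CR-localization} is not even needed for the statement itself (it was only used to motivate the construction).

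In short, the proof is a two-line deduction: apply Corollary~\ref{C:ME-right-inverse} to the data certified by Proposition~\ref{P:stable-CR-Rinv}, and read off $\Cqopt=\Cstab=\opnorm{E_1}{\CRS{}}{\SobH{\Domain}}\leq C_{d,\Shape_\Mesh}$. I do not anticipate a genuine obstacle; if anything, the subtlety is purely bookkeeping, namely checking that the method named $\app_\crs$ before the theorem (built from \eqref{mod-CR-for-Poisson}) is the same triplet $(\CRS{},\aext_{|\CRS{}\times\CRS{}},E_1)$ to which the corollary applies, which is immediate once one uses that $E_1$ is a right inverse of $\Ritz_{\crs}$.
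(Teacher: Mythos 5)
Your proposal is correct and follows exactly the paper's route: the paper's proof is precisely to identify $\app_\crs$ with the triplet $(\CRS{}, \aext_{|\CRS{}\times\CRS{}}, E_1)$ and then apply Proposition~\ref{P:stable-CR-Rinv} in Corollary~\ref{C:ME-right-inverse}. The extra bookkeeping you supply (the coincidence of \eqref{mod-CR-for-Poisson} and \eqref{mod2-CR-for-Poisson} via $\aext(s,E_1\sigma)=\aext(s,\Ritz_{\crs}E_1\sigma)$, and the identification of the extended energy norm with $\norm{\GradM\cdot}_{\Leb{\Domain}}$) is exactly what the paper leaves implicit.
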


\begin{proof}
Notice that $\app_{\crs} = (\CRS{}, b, E_1)$, where $b$ is the restriction of $\aext$ in \eqref{CR-setting} to $\CRS{}\times\CRS{}$. Thus, the claim follows by using Proposition \ref{P:stable-CR-Rinv} in Corollary~\ref{C:ME-right-inverse}. 
\end{proof}

The following two remarks clarify that the single ingredients for $E_1$ are not suitable smoothing operators, thereby underlining their complementary roles.

\begin{remark}[Instability of bubble smoothing]
\label{R:E1-norm}
The right inverse $\BubbOper$ is not uniformly $\SobH{\Domain}$-stable under refinement. To see this, let $\Mesh$ be a mesh of $\Domain = (0,1)^2$ the elements of which have diameter $h>0$ and consider the function $\sigma := \sum_{F \in \FacesMint} \Psi_F$. Then $\sigma=1$ in all elements except those touching $\partial\Omega$, while $\BubbOper\sigma$ oscillates between $0$ and $1$ in all elements. Accordingly, $\bar{\Phi}_F = d^{-d}\Phi_{m_F}^d$, \eqref{Lagrange-basis:scaling}, and $h^{-1} \Cgeq \snorm{\nabla\Psi_F}$ give
\begin{equation*}
 \norm{\nabla \BubbOper\sigma}_{\Leb{\Domain}}
 \Cgeq
 \#\Mesh
 \Cgeq
 h^{-1} \#\{ K \in \Mesh \mid K \cap \partial \Domain \neq \emptyset  \}
 \Cgeq
 h^{-1} \norm{\GradM \sigma}_{\Leb{\Domain}}.
\end{equation*}
\end{remark}
\begin{remark}[Inconsistency of (simplified) nodal averaging]
\label{R:smooth-avg}
The use of smoothing operator $\AvgOper{1 | \CRS{}}$ in \eqref{mod-CR-for-Poisson} does not lead to full algebraic consistency and so in particular not to quasi-optimality. In fact, since $\dim \CRS{} > \dim \Lagr{1}$, the kernel $N(\AvgOper{1 | \CRS{}})$ is non-trivial. Moreover, as $\AvgOper{1 | \CRS{}}$ is not $\aext$-orthogonal, $N(\AvgOper{1 | \CRS{}})$ and $\Lagr{1}$ are not $\aext$-orthogonal. Consequently, we can find  $\sigma \in \CRS{}$ which is $\aext$-orthogonal to $\Lagr{1}$ and such that $s := \AvgOper{1} \sigma \neq 0$. Then $s \in \Lagr{1} = \CRS{} \cap \SobH{\Domain}$ and $b(s,\sigma) = 0 \neq a(s, \AvgOper{1}\sigma)$, which contradicts full algebraic consistency.
\end{remark}

\subsection{Quasi-optimal Crouzeix-Raviart-like methods of arbitrary order}
\label{S:GL-qopt}
%
In this section we generalize the quasi-optimal Crouzeix-Raviart method $\app_{\crs}$ of \S\ref{S:CR-qopt} to arbitrary fixed order $\degree\geq2$. To this end, let $\Omega$ and $\Mesh$ be as in \S\ref{S:setting-FEM}, $d\geq2$, $\#\Mesh>1$, and, this time, we want to apply Corollary~\ref{C:ME-right-inverse} with
\begin{equation}
\label{GLp-setting}
\begin{gathered}
 V = \SobH{\Domain},
\\
 \Lagr{\degree} \subseteq S \subseteq \GL{\degree} :=
 \left\lbrace  s \in \Polyell{\degree} \mid
  \forall F \in \FacesM, \, q \in \Poly{\degree-1}(F) \;
   \int_F \Jump{s}{F} q = 0
 \right\rbrace,
\\
  \aext = a_\Mesh{}_{|\Vext\times\Vext} \text{ with }  \Vext = V + S 
\end{gathered}
\end{equation}
and $a_\Mesh$ from \eqref{aext-M}. For any $d\geq2$, the space $\GL{1}$ coincides with the Crouzeix-Raviart space $\CRS{}$ from \S\ref{S:CR-qopt}. If $d=2$, then $\GL{\degree}$ is the Fortin-Soulie space \cite{Fortin.Soulie:83} for $\degree=2$, the Crouzeix-Falk space \cite{Crouzeix.Falk:89} for $\degree=3$, and, in general, the Gauss-Legendre space of Baran and Stoyan \cite{Baran.Stoyan:06} of order $\degree$. The last article provides a finite element basis of the Gauss-Legendre spaces, distinguishing odd and even polynomial degree $p$. For $d=3$, Fortin \cite{Fortin:85} for $p=2$ and Ciarlet et al.\ \cite{Ciarlet.Dunkl.Sauter:17} in general construct finite element bases for nonconforming subspaces of $\GL{\degree}$, strict in certain situations. In order to cover also these Crouzeix-Raviart-like spaces, we require in \eqref{GLp-setting} only $S\subseteq\GL{\degree}$.

Independently of the choice of $S$, we have that, for every $s \in S$, the moment $\int_F s q$ is well-defined for all $F \in \FacesM$ and all $q \in \Poly{\degree-1}(F)$ and vanishes whenever $F \in \FacesMbnd$. As a consequence, $\norm{\cdot} = \norm{\GradM\cdot}_{\Leb{\Domain}}$, which is induced by $\aext$, is a norm.

Let $\Ritz_S$ denote the $\aext $-orthogonal projection of $\Vext$ onto $S\subseteq\GL{\degree}$. Some right inverses thereof can be construct as follows.

\begin{lemma}[Right inverses of CR-like projections]
\label{L:GL-Rinv}	
Let $S\subseteq\GL{\degree}$ with $p\geq2$ and $E:S\to \SobH{\Domain}$ be a linear operator. If we have
\begin{equation}
\label{GL-Rinv}
%
%
 \int_F ( E \sigma) q
 =
 \int_F \sigma q,
\qquad
%
%
 \int_K  (E\sigma) r
 =
 \int_K \sigma r
\end{equation}
for all $\sigma \in S$, $F \in \FacesMint, q \in \Poly{\degree-1}(F)$ and $ K \in \Mesh, r \in \Poly{\degree-2}(K)$, then $\Ritz_S E = \id_S$.
\end{lemma}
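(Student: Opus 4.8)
The plan is to show that $\Ritz_S E = \id_S$ by exploiting the characterization of $\Ritz_S$ through orthogonality. Since $\Ritz_S v$ is the unique element of $S$ with $a_\Mesh(s, v - \Ritz_S v) = 0$ for all $s \in S$, it suffices to prove that for every $\sigma \in S$ and every $s \in S$ we have $a_\Mesh(s, E\sigma - \sigma) = 0$; then $\Ritz_S(E\sigma) = \sigma$ by uniqueness. The quantity $a_\Mesh(s, E\sigma - \sigma) = \int_\Omega \GradM s \cdot \GradM(E\sigma - \sigma)$ is handled by piecewise integration by parts on each simplex $K \in \Mesh$, exactly as in the proof of Lemma~\ref{L:CR-Rinv}, but now $\GradM s$ is no longer piecewise constant, so the interior (element) terms survive.

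Concretely, I would write, for $w := E\sigma - \sigma \in \Vext$ (note $w_{|K} \in H^1(K)$ and $E\sigma \in \SobH{\Domain}$),
\begin{equation*}
a_\Mesh(s, w)
=
\sum_{K \in \Mesh} \int_K \GradM s \cdot \nabla w
=
\sum_{K \in \Mesh} \left( - \int_K (\Delta s_{|K})\, w + \int_{\partial K} \frac{\partial s_{|K}}{\partial n_K}\, w \right).
\end{equation*}
In the volume terms, $\Delta s_{|K} \in \Poly{\degree-2}(K)$, so the hypothesis $\int_K (E\sigma) r = \int_K \sigma r$ for all $r \in \Poly{\degree-2}(K)$ gives $\int_K (\Delta s_{|K})\, w = 0$ for each $K$. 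For the boundary terms, I would regroup the sum over $\partial K$ into a sum over faces $F \in \FacesM$: on an interior face $F = K_1 \cap K_2$ the contribution is $\int_F \Jump{\GradM s \cdot n}{F}\, w$ — but here one must be careful, since $w$ itself is not single-valued on $F$ ($\sigma$ jumps, though $E\sigma$ does not). Writing $w_{|K_j} = (E\sigma)_{|F} - \sigma_{|K_j}$ and noting $\frac{\partial s_{|K_j}}{\partial n_{K_j}}\big|_F \in \Poly{\degree-1}(F)$, the $E\sigma$-part collapses to $\int_F \Jump{\GradM s \cdot n}{F}\,(E\sigma)$ while the $\sigma$-part is $-\sum_j \int_F \frac{\partial s_{|K_j}}{\partial n_{K_j}} \sigma_{|K_j}$; combining and using the definition of $\CRS{}$-like jump moments together with the hypothesis $\int_F (E\sigma) q = \int_F \sigma q$ for all $q \in \Poly{\degree-1}(F)$, each interior-face term vanishes. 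On a boundary face $F \subseteq \partial\Omega$ one uses $E\sigma \in \SobH{\Domain}$, hence its trace on $F$ vanishes, while $\int_F \sigma q = 0$ for $q \in \Poly{\degree-1}(F)$ because $\sigma \in \GL{\degree}$; so the boundary-face terms vanish too. Therefore $a_\Mesh(s,w) = 0$ for all $s \in S$, and $\Ritz_S E\sigma = \sigma$.

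The main obstacle — really the only subtle point — is the bookkeeping on interior faces when $w$ is two-valued: one must split $w$ into the single-valued conforming part $E\sigma$ and the jumping part $\sigma$ and match each against the right polynomial space ($\Poly{\degree-1}(F)$ for the normal-derivative traces, $\Poly{\degree-2}(K)$ for the Laplacian) before invoking the two hypotheses in \eqref{GL-Rinv}. Once this is organized, the identity $a_\Mesh(s, E\sigma - \sigma) = 0$ follows term by term, and uniqueness of the orthogonal projection finishes the proof. A minor point worth stating explicitly is that $\Delta s_{|K}$ lies in $\Poly{\degree-2}(K)$ and the normal traces $\partial_n s_{|K}$ lie in $\Poly{\degree-1}(F)$ precisely so that the prescribed moments in \eqref{GL-Rinv} are exactly the ones needed — this is why the hypothesis is formulated with those polynomial degrees.
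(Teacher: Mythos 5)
Your proposal is correct and follows essentially the same route as the paper's proof: piecewise integration by parts, with the element terms killed by the volume moments (since $\Delta s_{|K}\in\Poly{\degree-2}(K)$) and the face terms killed by the face moments together with the vanishing jump moments defining $\GL{\degree}$. Your treatment of the two-valued trace of $\sigma$ on interior faces is more explicit than the paper's (which writes the face sum directly with the normal-jump notation), but the argument is the same.
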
 
\begin{proof}
Given $s, \sigma \in S \subseteq \GL{\degree}$, we integrate piecewise by parts and obtain
\begin{equation*}
\begin{split}
 a_\Mesh  (s, \sigma &- E\sigma)
 =
 \sum \limits_{K \in \Mesh}
  \left( 
   \int_{\partial K} \dfrac{\partial s}{\partial n_K} (\sigma - E\sigma)
   -
  \int_K \triangle s (\sigma - E\sigma )
 \right)
\\
 &=
 \sum \limits_{F \in \FacesMint} 
  \int_F \Jump{\GradM s \cdot n}{F}(\sigma - E\sigma)
   -
  \sum \limits_{K \in \Mesh}
   \int_K \triangle s (\sigma - E\sigma )
  =
  0
\end{split}
\end{equation*}
thanks to the hypotheses on $E$. Hence, $0 = \Ritz_S(\sigma - E \sigma) = \sigma - \Ritz_S E \sigma$. 
\end{proof}

Let us construct such a smoothing operator by following the lines of the construction of $E_1$ in \S\ref{S:CR-qopt}. In order to define a higher order bubble smoother, we employ local weighted $L^2$-projections associated to faces and elements. For every interior face $F\in\FacesMint$, let $Q_F:\Leb{F} \to \Poly{\degree-1}(F)$ be given by
\begin{equation}
\label{Q_F}
 \forall q \in \Poly{\degree-1}(F)
\quad
 \int_{F} (Q_Fv) q \, \Phi_F = \int_{F} v q,
\end{equation}
where $\Phi_F \in \Lagr{d}$ is the face bubble function of \eqref{CR-face-bubble} with $\mathrm{supp}\,\Phi_F = \omega_F$, and, for every mesh element $K \in \Mesh$, let $Q_K: \Leb{K} \to \Poly{\degree-2}(K)$ be given by
\begin{equation}
\label{Q_K}
  \forall r \in \Poly{\degree-2}(K)
 \quad
 \int_{K} (Q_Kv) r \, \Phi_K = \int_{K} v r,
\end{equation}
where $\Phi_K := \prod_{z\in\LagNod{1}(K)} \Phi^1_z \in \Lagr{d+1}$ is the element bubble function with $\mathrm{supp}\,\Phi_K = K$. This leads to the global bubble operators
\begin{equation*}
 \BubbOper[\Mesh,\degree] v
 :=
 \sum_{K \in \Mesh} (Q_K v) \Phi_K,
\quad
 \BubbOper[\FacesM,\degree] v
 :=
 \sum_{F \in \FacesMint} \sum_{z \in \LagNod{\degree-1}(F)}
  (Q_F v)(z) \Phi_z^{\degree-1} \Phi_F,
\end{equation*}
where $\BubbOper[\FacesM,\degree]$ incorporates an extension by means of Lagrange basis functions. Their combination provides a right inverse of $\Ritz_S$.

\begin{lemma}[Higher order bubble smoother]
\label{L:GL-bubble-smoother}
For any $p\geq2$, the linear operator $\BubbOper[\degree]:\GL{\degree} \to \SobH{\Domain}$ defined by
\begin{equation*}
 \BubbOper[\degree] \sigma
 :=
 \BubbOper[\FacesM,\degree] \sigma + \BubbOper[\Mesh,\degree](\sigma - \BubbOper[\FacesM,\degree] \sigma)
\end{equation*}
satisfies \eqref{GL-Rinv} and the local stability estimate
\begin{equation*}
 \norm{\nabla \BubbOper[\degree] \sigma}_{\Leb{K}}
 \leq
 \frac{C_{d,\degree}}{\rho_K} \left(
 \sup_{r\in\Poly{\degree-2}(K)}
  \frac{\int_K \sigma r}{\norm{r}_{\Leb{K}}}
 + \sum_{F\in\Faces{K}}
  \frac{|K|^{\frac{1}{2}}}{|F|^{\frac{1}{2}}}
  \sup_{q\in\Poly{\degree-1}(F)} \frac{\int_F \sigma q}{\norm{q}_{\Leb{F}}}
 \right).
\end{equation*}
\end{lemma}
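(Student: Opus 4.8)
The plan is to verify the two assertions of the lemma separately: first that $\BubbOper[\degree]$ satisfies the moment conditions \eqref{GL-Rinv}, and then the local stability estimate. The first part should follow quickly from the defining identities of the local projections $Q_F$ and $Q_K$ together with a telescoping argument analogous to \eqref{CR-E-Rinv}. Concretely, for $F\in\FacesMint$ and $q\in\Poly{\degree-1}(F)$, I would compute $\int_F (\BubbOper[\degree]\sigma)q$. Since $\Phi_K$ vanishes on every face, only the $\BubbOper[\FacesM,\degree]$-part survives on $F$: writing $v:=\BubbOper[\FacesM,\degree]\sigma$, one has $\int_F(\BubbOper[\degree]\sigma)q = \int_F v\,q$. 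Now $v_{|F} = (Q_F\sigma)\Phi_F$ restricted to $F$ plus contributions from neighboring faces' bubbles, all of which vanish on $F$; so $\int_F v\,q = \int_F (Q_F\sigma)q\,\Phi_F = \int_F \sigma q$ by \eqref{Q_F}. For $K\in\Mesh$ and $r\in\Poly{\degree-2}(K)$, I would use $\int_K(\BubbOper[\degree]\sigma)r = \int_K (Q_K w)r\,\Phi_K + \int_K w\,r$ with $w:=\sigma-\BubbOper[\FacesM,\degree]\sigma$; the first term equals $\int_K w\,r$ by \eqref{Q_K}, so $\int_K(\BubbOper[\degree]\sigma)r = \int_K \sigma r$, with the $\BubbOper[\FacesM,\degree]$-contributions cancelling. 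This gives \eqref{GL-Rinv}, hence $\Ritz_S E = \id_S$ by Lemma~\ref{L:GL-Rinv}.

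For the stability estimate I would bound $\norm{\nabla\BubbOper[\degree]\sigma}_{\Leb{K}}$ by splitting along the definition: $\norm{\nabla\BubbOper[\FacesM,\degree]\sigma}_{\Leb{K}} + \norm{\nabla\BubbOper[\Mesh,\degree](\sigma-\BubbOper[\FacesM,\degree]\sigma)}_{\Leb{K}}$. On a single element $K$, $\BubbOper[\FacesM,\degree]\sigma$ is the sum over $F\in\Faces{K}$ of $\big(\sum_{z\in\LagNod{\degree-1}(F)}(Q_F\sigma)(z)\Phi_z^{\degree-1}\big)\Phi_F$, a polynomial of fixed degree on $K$. By affine equivalence and the usual scaling of polynomials of bounded degree on a simplex, $\norm{\nabla(\cdot)}_{\Leb{K}} \Cleq \rho_K^{-1}|K|^{1/2}\|\cdot\|_{L^\infty(K)}$, and the $L^\infty$-norm of the $F$-summand is controlled by $\max_z|(Q_F\sigma)(z)|$, which in turn — since $Q_F\sigma\in\Poly{\degree-1}(F)$ lives in a fixed-dimensional space — is $\Cleq |F|^{-1/2}\norm{Q_F\sigma}_{\Leb{F}}$. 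To estimate $\norm{Q_F\sigma}_{\Leb{F}}$ I would test \eqref{Q_F} with $q=Q_F\sigma$: using that $\Phi_F$ has a positive lower bound on a fixed-proportion subset of $F$ (after scaling) and an equivalence of norms on $\Poly{\degree-1}(F)$, one gets $\norm{Q_F\sigma}_{\Leb{F}}^2 \Cleq \int_F(Q_F\sigma)^2\Phi_F = \int_F\sigma\,(Q_F\sigma) \leq \norm{Q_F\sigma}_{\Leb{F}}\sup_{q}\frac{\int_F\sigma q}{\norm{q}_{\Leb{F}}}\cdot(\text{const})$, hence $\norm{Q_F\sigma}_{\Leb{F}}\Cleq \sup_{q\in\Poly{\degree-1}(F)}\int_F\sigma q/\norm{q}_{\Leb{F}}$. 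Assembling these gives the term $\sum_{F\in\Faces{K}}\frac{|K|^{1/2}}{|F|^{1/2}}\sup_q\frac{\int_F\sigma q}{\norm{q}_{\Leb{F}}}$ up to the factor $\rho_K^{-1}$. An entirely parallel argument with $Q_K$, \eqref{Q_K}, and the element bubble $\Phi_K$ handles $\BubbOper[\Mesh,\degree]$, but applied to $w=\sigma-\BubbOper[\FacesM,\degree]\sigma$ rather than $\sigma$; here I would note $\sup_{r}\frac{\int_K w\,r}{\norm{r}_{\Leb{K}}} = \sup_r\frac{\int_K\sigma r - \int_K(\BubbOper[\FacesM,\degree]\sigma)r}{\norm{r}_{\Leb{K}}}$ and bound the second numerator by $\norm{\BubbOper[\FacesM,\degree]\sigma}_{\Leb{K}}\norm{r}_{\Leb{K}}$, then reuse the $L^2$-bound on $\BubbOper[\FacesM,\degree]\sigma$ just obtained (which is $\Cleq |K|^{1/2}\sum_{F\in\Faces{K}}|F|^{-1/2}\sup_q\int_F\sigma q/\norm{q}_{\Leb{F}}$). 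All geometric factors combine, via \eqref{adjacent-elements} and shape regularity, into the stated constant $C_{d,\degree}/\rho_K$; note the constant is allowed to depend on $d$ and $\degree$ but, crucially, \emph{not} on $\Shape_\Mesh$, because everything is genuinely local to $K$ and $\Faces{K}$ with no star involved.

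The main obstacle I anticipate is the careful bookkeeping in the stability estimate: ensuring that the weighted projections $Q_F$, $Q_K$ are bounded with constants depending only on $d$ and $\degree$, which requires the standard but slightly delicate observation that on the reference simplex the bubble-weighted $L^2$-inner product $(\cdot,\cdot)_{\Phi}$ is equivalent to the ordinary one on the finite-dimensional polynomial space (this is where the positivity of $\Phi_F$, $\Phi_K$ in the interior enters), combined with correct tracking of the scaling powers $|K|$, $|F|$, $h_K$, $\rho_K$ under the affine pullback. A secondary subtlety is that $\BubbOper[\FacesM,\degree]\sigma$ restricted to $K$ receives contributions only from the $d+1$ faces of $K$ (bubbles of faces not in $\Faces{K}$ vanish identically on $K$), so the local estimate genuinely involves only $\Faces{K}$ — this must be stated explicitly to justify the absence of $\Shape_\Mesh$. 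Once these scaling lemmas are in place, the rest is routine triangle-inequality assembly.
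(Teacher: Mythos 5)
Your proposal is correct and follows essentially the same route as the paper: verify \eqref{GL-Rinv} via the vanishing of the element bubbles on faces and of foreign bubbles on $K$, then prove stability through an inverse estimate plus the bubble-weighted norm equivalence giving $\norm{Q_F\sigma}_{\Leb{F}}\lesssim\sup_q\int_F\sigma q/\norm{q}_{\Leb{F}}$ and its element analogue. If anything, you are slightly more explicit than the paper in tracking that $\BubbOper[\Mesh,\degree]$ acts on $\sigma-\BubbOper[\FacesM,\degree]\sigma$ (so the face terms reappear when bounding the element projection) and in noting that all constants are genuinely local, depending only on $d$ and $\degree$.
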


\begin{proof}
The operator $\BubbOper[\degree]$ is well-defined, because in particular the right-hand sides of \eqref{Q_F} are well-defined moments of any $\sigma \in \GL{\degree}$. Moreover, it maps into $\SobH{\Domain}$, since $\Phi_F \in \SobH{\Domain}$ for $F \in \FacesMint$ and  $\Phi_K \in \SobH{\Domain}$ for $K \in \Mesh$.

In order to verify \eqref{GL-Rinv}, let $\sigma \in S$ and consider, first, an interior face $F \in \FacesMint$ and $q \in \Poly{\degree-1}(F)$. In view of $\Phi_{K'}{}_{|F} = 0$ for $K' \in\Mesh$ and $\Phi_{F'}{}_{|F} = 0$ for $F'\neq F$, \eqref{Q_F} gives
\begin{equation*}
 \int_F (\BubbOper[\degree] \sigma) q
 =
 \int_F (Q_F \sigma) \Phi_F q
 =
 \int_F \sigma q.
\end{equation*}
Second, let $K \in \Mesh$ and $r \in \Poly{\degree-2}(K)$. Here, thanks to $\Phi_{K'}{}_{|K} = 0$ for $K'\neq K$, \eqref{Q_K} leads to
\begin{equation*}
 \int_K (\BubbOper[\degree] \sigma) r
 =
 \int_K (\BubbOper[\FacesM,\degree] \sigma ) r
  + \int_K Q_K (\sigma - \BubbOper[\FacesM,\degree] \sigma) \Phi_K r
 =
 \int_K \sigma r.
\end{equation*}

Finally, let us verify the stability estimate. Employing inverse estimates in $\Poly{\degree+d-1}(K)$ and $\Poly{\degree-1}(F)$ as well as $0 \leq \Phi_K \leq 1$ and \eqref{intregation-bary-coord}, we derive
\begin{equation}
\label{Ep:stability-in-Q}
\begin{aligned}
 \norm{\nabla \BubbOper[\degree] \sigma}_{\Leb{K}}
 &\leq
 C_{d,\degree} \rho_K^{-1} \norm{\BubbOper[\degree] \sigma}_{\Leb{K}}
\\
 &\leq
 C_{d,\degree} \frac{|K|^{\frac{1}{2}}}{\rho_K |F|^{\frac{1}{2}}} \norm{Q_F\sigma}_{\Leb{F}}
 +
  \frac{C_{d,\degree}}{\rho_K} \norm{Q_K\sigma}_{\Leb{K}}.
\end{aligned}
\end{equation}
Moreover, another inverse estimate in every $\Poly{\degree-2}(K)$ yields
\begin{equation*}
 \norm{Q_K\sigma}_{\Leb{K}}^2
 \leq
 C_{d,\degree} \int_K |Q_K\sigma|^2 \Phi_K
 =
 C_{d,\degree} \int_K \sigma Q_K\sigma,
\end{equation*}
whence
\begin{equation}
\label{Q_K-stability}
 \norm{Q_K\sigma}_{\Leb{K}}
 \leq
 C_{d,\degree}
 \sup_{r\in\Poly{\degree-2}(K)} \frac{\int_K \sigma r}{\norm{r}_{\Leb{K}}}.
\end{equation}
A similar argument in every $\Poly{\degree-1}(F)$ gives
\begin{equation}
\label{Q_F-stability}
\norm{Q_F\sigma}_{\Leb{F}}
\leq
 C_{d,\degree}
 \sup_{q\in\Poly{\degree-1}(F)} \frac{\int_F \sigma q}{\norm{q}_{\Leb{F}}}.
\end{equation}
We then obtain the stability estimate by inserting \eqref{Q_K-stability} and \eqref{Q_F-stability} into \eqref{Ep:stability-in-Q}.
\end{proof}

Stabilizing the bubble smoother $\BubbOper[\degree]$ with simplified nodal averaging $\AvgOper{\degree}$, we obtain a smoothing operator with the desired properties.

\begin{proposition}[Stable right inverses of CR-like projections]
\label{P:stable-GL-Rinv}
Let $\degree\geq2$ and $\Lagr{\degree} \subseteq S \subseteq \GL{\degree}$. The linear operator $E_{\degree}:S\to\SobH{\Domain}$ given by
\begin{equation*}
 E_{\degree}\sigma
 :=
 \AvgOper{\degree}\sigma + \BubbOper[\degree](\sigma - \AvgOper{\degree}\sigma)
\end{equation*}
is invariant on $\Lagr{\degree}$, a right inverse of the Crouzeix-Raviart-like projection $\Ritz_S$, and $\SobH{\Domain}$-stable with stability constant $\leq C_{d,\degree,\Shape_\Mesh}$.
\end{proposition}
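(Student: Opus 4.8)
The plan is to verify the three asserted properties of $E_\degree$ in turn, following closely the template established for $E_1$ in Proposition~\ref{P:stable-CR-Rinv}, but now invoking Lemma~\ref{L:GL-bubble-smoother} in place of the elementary facts about the face bubble $\bar\Phi_F$. First, well-definedness and $\SobH{\Domain}$-smoothing: since $R(\AvgOper{\degree}) = \Lagr{\degree} \subseteq S$, the argument $\sigma - \AvgOper{\degree}\sigma$ lies in $S \subseteq \GL{\degree}$, so $\BubbOper[\degree]$ may be applied to it; both $\AvgOper{\degree}\sigma \in \Lagr{\degree}$ and $\BubbOper[\degree](\cdot) \in \SobH{\Domain}$ map into $\SobH{\Domain}$, and hence so does $E_\degree\sigma$. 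Invariance on $\Lagr{\degree}$ follows because $\AvgOper{\degree}$ is a projection onto $\Lagr{\degree}$, so for $\sigma \in \Lagr{\degree}$ the correction $\sigma - \AvgOper{\degree}\sigma$ vanishes and $E_\degree\sigma = \AvgOper{\degree}\sigma = \sigma$; in particular $E_\degree$ restricts to the identity on the conforming part $\Lagr{\degree} = S \cap \SobH{\Domain}$.

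Second, the right-inverse property $\Ritz_S E_\degree = \id_S$. By Lemma~\ref{L:GL-Rinv} it suffices to check the two moment conditions in \eqref{GL-Rinv}. For an interior face $F$ and $q \in \Poly{\degree-1}(F)$, write $\int_F (E_\degree\sigma)q = \int_F (\AvgOper{\degree}\sigma)q + \int_F \BubbOper[\degree](\sigma - \AvgOper{\degree}\sigma)q$; the second term equals $\int_F (\sigma - \AvgOper{\degree}\sigma)q$ by the face-moment identity \eqref{GL-Rinv} already proved for $\BubbOper[\degree]$ in Lemma~\ref{L:GL-bubble-smoother}, so the two $\AvgOper{\degree}\sigma$ contributions cancel and we are left with $\int_F \sigma q$. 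The element-moment condition for $K \in \Mesh$ and $r \in \Poly{\degree-2}(K)$ is handled identically, using the element-moment identity for $\BubbOper[\degree]$. (This mirrors the computation \eqref{CR-E-Rinv} for $E_1$.)

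Third, and this is the main work, the $\SobH{\Domain}$-stability bound $\opnorm{E_\degree}{S}{\SobH{\Domain}} \leq C_{d,\degree,\Shape_\Mesh}$. Split $\norm{\nabla E_\degree\sigma}_{\Leb{\Domain}} \leq \norm{\GradM\sigma}_{\Leb{\Domain}} + \norm{\GradM(\sigma - \AvgOper{\degree}\sigma)}_{\Leb{\Domain}} + \norm{\nabla\BubbOper[\degree](\sigma - \AvgOper{\degree}\sigma)}_{\Leb{\Domain}}$, exactly as in the proof of Proposition~\ref{P:stable-CR-Rinv}. The middle term is bounded $\Cleq \norm{\GradM\sigma}_{\Leb{\omega_K}}$ on each $K$ by combining Lemma~\ref{L:averaging} with \eqref{adjacent-elements} and \eqref{Lagrange-basis:scaling}, just as in \eqref{CR-E-bound2} — one needs the jump means $|F|^{-1}|\int_F \Jump{\sigma}{F}|$ appearing in Lemma~\ref{L:averaging} to be controlled by $\norm{\GradM\sigma}$ on adjacent elements, which follows since $\sigma \in \GL{\degree}$ forces $\int_F \Jump{\sigma}{F} = 0$ on interior faces (take $q \equiv 1$), so in fact only the gradient term of Lemma~\ref{L:averaging} survives. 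For the third term, I would first apply the local estimate of Lemma~\ref{L:GL-bubble-smoother} with $\sigma$ replaced by $\sigma - \AvgOper{\degree}\sigma$, bounding $\norm{\nabla\BubbOper[\degree](\sigma - \AvgOper{\degree}\sigma)}_{\Leb{K}}$ by a constant over $\rho_K$ times face- and element-moments of $\sigma - \AvgOper{\degree}\sigma$; then I would estimate each such moment by a dual-norm argument, writing $\sup_{r}\int_K (\sigma - \AvgOper{\degree}\sigma)r/\norm{r}_{\Leb K} \leq \norm{\sigma - \AvgOper{\degree}\sigma}_{\Leb K}$ and similarly for faces (with a trace inequality $\norm{\,\cdot\,}_{\Leb F} \Cleq |F|^{1/2}|K|^{-1/2}(\norm{\,\cdot\,}_{\Leb K} + h_K\norm{\nabla\,\cdot\,}_{\Leb K})$ for polynomials), and finally invoke a local Poincaré-type / inverse estimate together with Lemma~\ref{L:averaging} to control $\norm{\sigma - \AvgOper{\degree}\sigma}_{\Leb K} \Cleq h_K\norm{\GradM\sigma}_{\Leb{\omega_K}}$. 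Summing over $K \in \Mesh$ and using that each patch $\omega_K$ contains $\leq C_{d,\Shape_\Mesh}$ elements yields the claim. The delicate point — and the one I expect to cost the most care — is keeping the powers of $h_K$, $\rho_K$, $|K|$ and $|F|$ straight through the moment-duality and scaling steps so that all geometric factors collapse into $C_{d,\degree,\Shape_\Mesh}$ via \eqref{adjacent-elements}; everything else is a routine adaptation of the $p=1$ argument.
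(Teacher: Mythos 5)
Your proposal is correct and follows essentially the same route as the paper: Lemma~\ref{L:GL-Rinv} together with the moment identities of Lemma~\ref{L:GL-bubble-smoother} for the right-inverse property, and the identical three-term splitting, resolved through Lemma~\ref{L:averaging} (with the jump means vanishing since $\int_F\Jump{\sigma}{F}=0$), for the stability bound. The only immaterial deviation is that you estimate the face and element moments of $\sigma-\AvgOper{\degree}\sigma$ by passing through $\Leb{K}$-norms and a polynomial trace inequality, whereas the paper bounds these moments directly by the nodal deviations $\snorm{\sigma_{|K}(z)-\AvgOper{\degree}\sigma(z)}$ via \eqref{intregation-bary-coord}; both reductions end at Lemma~\ref{L:averaging} and produce the same geometric factors.
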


\begin{proof}
We follow the lines of the proof of Proposition \ref{P:stable-CR-Rinv}  and easily check that $E_p$ is well-defined, provides $\SobH{\Domain}$-smoothing and is invariant on $\Lagr{\degree}$. Arguing as in \eqref{CR-E-Rinv} for any $F\in\FacesMint$ and any $q \in \Poly{\degree-1}(F)$ as well as for mesh element $K \in \Mesh$ and $r \in \Poly{\degree-2}(K)$, we find that that $E_{\degree}$ is a right inverse of $\Ritz_S$ onto $S$. 

It remains to bound $\opnorm{E_{\degree}}{S}{\SobH{\Domain}}$ appropriately. We let $\sigma \in S$ and write
\begin{equation*}
\label{GL-E-bound1}
\norm{\nabla E_{\degree} \sigma}_{\Leb{\Domain}}
\leq
\norm{\GradM \sigma}_{\Leb{\Domain}}
+
\norm{\GradM(\sigma - \AvgOper{\degree} \sigma)}_{\Leb{\Domain}}
+
\norm{\nabla \BubbOper[\degree](\sigma - \AvgOper{\degree} \sigma)}_{\Leb{\Domain}}.
\end{equation*}
To bound the second and third term, fix a mesh element $K \in \Mesh$. For the second term, we argue as in \eqref{CR-E-bound2}, with the polynomial degree $1$ replaced by $p$, and obtain
\begin{equation}
\label{GL-E-bound2}
 \norm{\nabla(\sigma - \AvgOper{\degree} \sigma)}_{\Leb{K}}
 \Cleq
 \norm{\GradM\sigma}_{\Leb{\omega_K}}.
\end{equation}
Regarding the third term, \eqref{intregation-bary-coord} gives
\begin{equation*}
  \sup_{r\in\Poly{\degree-2}(K)}
  \frac{\int_K (\sigma - \AvgOper{\degree}\sigma)r}{\norm{r}_{\Leb{K}}}
  \leq
  C_{d,\degree} \snorm{K}^{\frac{1}{2}}
   \sum_{z \in \LagNod{\degree}(\partial K)}
  	\snorm{\sigma_{|K}(z) - \AvgOper{\degree}\sigma(z)}
\end{equation*}
and, for every $F \in \Faces{K}$,
\begin{equation*}
 \sup_{q\in\Poly{\degree-1}(F)}
 \frac{\int_F (\sigma - \AvgOper{\degree}\sigma)q}{\norm{q}_{\Leb{F}}}
 \leq
 C_{d,\degree} \snorm{F}^{\frac{1}{2}}
  \sum_{z \in \LagNod{\degree}(F)}
   \snorm{\sigma_{|K}(z) - \AvgOper{\degree}\sigma(z)}.
\end{equation*}
Employing the stability estimate of Lemma \ref{L:GL-bubble-smoother}, the last two estimates and then Lemma \ref{L:averaging}, we derive 
\begin{equation}
\label{GL-E-bound3}
 \norm{\nabla \BubbOper (\sigma - \AvgOper{\degree} \sigma)}_{\Leb{K}}
\Cleq
\norm{\GradM \sigma}_{\Leb{\omega_K}}.
\end{equation}
Then summing \eqref{GL-E-bound2} and \eqref{GL-E-bound3} over all mesh elements $K \in \Mesh$ finishes the proof, as for Proposition \ref{P:stable-CR-Rinv}.
\end{proof}

We let $M_S$ denote the \emph{new Crouzeix-Raviart-like method of arbitrary fixed order} combining the setting \eqref{GLp-setting} with the smoothing operator $E_{\degree}$ in Proposition \ref{P:stable-GL-Rinv}. We have $M_S = (S,\aext_{|S \times S}, E_{\degree})$ and its discrete problem reads:
\begin{equation}
\label{GL-discrete-pb}
 U_S \in S
 \quad\text{such that}\quad
 \forall \sigma \in S \;
 \int_{\Omega} \GradM U_S \cdot \GradM\sigma
 =
 \langle f, E_{\degree} \sigma \rangle.
\end{equation}
Concerning the computational feasibility of $E_{\degree}$, notice that 
\begin{itemize}
\item it suffices to know the evaluations $\langle f, \Phi^{\degree}_z\rangle$ for $z \in \LagNodMint{\degree}$ as well as $\langle f, \Phi^{\degree-1}_z\Phi_F\rangle$ for $F\in\FacesMint$, $z \in \LagNod{\degree-1}(F)$, and $\langle f, \Phi^{\degree-2}_z\Phi_K\rangle$ for $K\in\Mesh$, $z \in \LagNod{\degree-2}(K)$,
\item $E_{\degree}$ is local in that, if $\omega$ is the support of a basis function $\Phi$ from \cite{Ciarlet.Dunkl.Sauter:17,Fortin:85,Baran.Stoyan:06}, then $\omega$ is a mesh element, a pair or a star of elements and $ \text{supp}\, E\Phi \subset \cup_{K\subset\omega} \omega_K$,
\item the operators $Q_F$ and $Q_K$ in \eqref{Q_F} and \eqref{Q_K} can be implemented by means of matrices which are precalculated on a reference element and, for $d=2$ and $Q_F$, can be diagonalized with the help of Legendre polynomials.
\end{itemize}
In contrast to the methods in
\cite{Ciarlet.Dunkl.Sauter:17,Fortin:85,Baran.Stoyan:06},
method $M_S$ enjoys the following property.
\begin{theorem}[Quasi-optimality of $M_S$]
\label{T:GL-qopt}
For any $\degree\geq2$ and any subspace $S$ with $\Lagr{\degree}\subseteq S \subseteq \CRS{\degree}$, the method $M_S$ is a $\norm{\GradM\cdot}$-quasi-optimal nonconforming Galerkin method for the Poisson problem \eqref{Poisson} with quasi-optimality constant $\leq C_{d,\degree,\Shape_\Mesh}$.
\end{theorem}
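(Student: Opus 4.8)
The plan is to derive Theorem~\ref{T:GL-qopt} as a direct application of Corollary~\ref{C:ME-right-inverse}, in exact parallel with the proof of Theorem~\ref{T:MCR-qopt}. The essential point is that all the structural work has already been done: the setting \eqref{GLp-setting} supplies the pair $(V,\aext)$ and the discrete space $S$ with $\Lagr{\degree}\subseteq S\subseteq\GL{\degree}$, and Proposition~\ref{P:stable-GL-Rinv} furnishes a smoothing operator $E_{\degree}:S\to\SobH{\Domain}$ that is simultaneously invariant on $\Lagr{\degree}=S\cap\SobH{\Domain}$, a right inverse of the $\aext$-orthogonal projection $\Ritz_S$ onto $S$, and $\SobH{\Domain}$-stable with stability constant $\leq C_{d,\degree,\Shape_\Mesh}$.

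Concretely, I would proceed as follows. First, observe that by definition $M_S=(S,\aext_{|S\times S},E_{\degree})$, so $M_S$ is of the form $\app_{E^{\vartriangle}}$ with $E^{\vartriangle}=E_{\degree}$ a right inverse of $\Ritz_S$; hence Corollary~\ref{C:ME-right-inverse} applies verbatim and yields that $M_S$ is $\norm{\GradM\cdot}$-quasi-optimal for \eqref{Poisson} with $\Cqopt=\Cstab=\opnorm{E_{\degree}}{S}{\SobH{\Domain}}$. Second, invoke the stability bound of Proposition~\ref{P:stable-GL-Rinv} to conclude $\Cqopt\leq C_{d,\degree,\Shape_\Mesh}$. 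Third, note that Corollary~\ref{C:ME-right-inverse} also tells us $M_S$ is a \emph{nonconforming Galerkin method} precisely when $E_{\degree}{}_{|S\cap\SobH{\Domain}}=\id$, which holds here because $S\cap\SobH{\Domain}=\Lagr{\degree}$ and $E_{\degree}$ is invariant on $\Lagr{\degree}$ (as stated in Proposition~\ref{P:stable-GL-Rinv}). One should also remark that the norm $\norm{\cdot}=\norm{\GradM\cdot}_{\Leb{\Domain}}$ induced by $\aext$ on $\Vext=V+S$ is genuinely a norm on this space — this is already noted in the paragraph following \eqref{GLp-setting}, using that $\int_F sq$ vanishes for $F\in\FacesMbnd$ and all $q\in\Poly{\degree-1}(F)$ — so that the hypotheses of the abstract framework of \S\ref{S:Groundwork} are met.

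Since every ingredient is already in place, there is essentially no obstacle: the proof is a one-paragraph bookkeeping exercise assembling Proposition~\ref{P:stable-GL-Rinv} and Corollary~\ref{C:ME-right-inverse}. If anything deserves a word of care, it is checking that the right-inverse property asserted in Proposition~\ref{P:stable-GL-Rinv} is the one needed by Corollary~\ref{C:ME-right-inverse}, namely that $\Ritz_S E_{\degree}=\id_S$ for the $\aext$-orthogonal projection $\Ritz_S$ onto the chosen subspace $S$ (not merely onto $\GL{\degree}$); but this is exactly what Lemma~\ref{L:GL-Rinv} and its use in the proof of Proposition~\ref{P:stable-GL-Rinv} establish, via the moment conditions \eqref{GL-Rinv}, which hold for all $\sigma\in S\subseteq\GL{\degree}$.

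\begin{proof}
By construction $M_S=(S,\aext_{|S\times S},E_{\degree})$ with the setting \eqref{GLp-setting}, and Proposition~\ref{P:stable-GL-Rinv} shows that $E_{\degree}:S\to\SobH{\Domain}$ is a right inverse of the $\aext$-orthogonal projection $\Ritz_S$ from $\Vext=V+S$ onto $S$. Hence $M_S=\app_{E^{\vartriangle}}$ in the notation of Corollary~\ref{C:ME-right-inverse} with $E^{\vartriangle}=E_{\degree}$, and that corollary yields that $M_S$ is $\norm{\GradM\cdot}$-quasi-optimal for \eqref{Poisson} with
\begin{equation*}
 \Cqopt=\Cstab=\opnorm{E_{\degree}}{S}{\SobH{\Domain}}.
\end{equation*}
Since $S\cap\SobH{\Domain}=\Lagr{\degree}$ and $E_{\degree}$ is invariant on $\Lagr{\degree}$ by Proposition~\ref{P:stable-GL-Rinv}, we have $E_{\degree}{}_{|S\cap\SobH{\Domain}}=\id_{S\cap\SobH{\Domain}}$, so Corollary~\ref{C:ME-right-inverse} also gives that $M_S$ is a nonconforming Galerkin method. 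Finally, the stability bound of Proposition~\ref{P:stable-GL-Rinv} yields $\Cqopt=\opnorm{E_{\degree}}{S}{\SobH{\Domain}}\leq C_{d,\degree,\Shape_\Mesh}$.
\end{proof}
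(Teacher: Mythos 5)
Your proof is correct and is exactly the paper's argument: the paper's proof of Theorem~\ref{T:GL-qopt} is the one-liner ``Use Proposition~\ref{P:stable-GL-Rinv} in Corollary~\ref{C:ME-right-inverse}.'' Your additional checks (that $S\cap\SobH{\Domain}=\Lagr{\degree}$ gives the nonconforming Galerkin property, and that the right-inverse property is relative to $\Ritz_S$ on the chosen $S$) are exactly the bookkeeping the paper leaves implicit.
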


\begin{proof}
Use Proposition~\ref{P:stable-GL-Rinv} in Corollary \ref{C:ME-right-inverse}.
\end{proof}

\subsection{A quasi-optimal Morley method}
\label{S:MR-qopt}
%
This section constructs a quasi-optimal Morley method for the `biharmonic equation' with clamped boundary conditions,
\begin{equation}
\label{biharmonic_pb}
 \Delta^2 u = f \text{ in }\Omega,
\quad
 u = 0 \text{ and } \partial_n u = 0 \text{ on }\partial\Omega,
\end{equation}
where $\Omega$ and $\Mesh$ are as in \S\ref{S:setting-FEM}, $d=2$, and $\#\Mesh>1$. Defining $H^2(\Mesh)$ and the piecewise Hessian $\HessM$ similar to $H^1(\Mesh)$ and $\GradM$, we set
\begin{equation*}
%
 a_\Mesh(w_1, w_2) 
 :=
 \int_{\Domain} \HessM w_1 : \HessM w_2,
 \qquad
 w_1, w_2 \in H^2(\Mesh),
\end{equation*}
and aim at applying Corollary \ref{C:ME-right-inverse} with the following setting:
\begin{equation}
\label{MR-setting}
\begin{gathered}
 V=\SobbH,
\\
 S = \MRS{} 
 := 
 \left\{ s \in \Polyell{2} \mid
  s \text{ is cont.\ in } \LagNodM{1}, \,
  s_{|\LagNodMbnd{1}} = 0, \,
  \forall F \in \FacesM \; \int_F \Jump{\partial_n s}{F} = 0 
 \right\},
\\
 \aext = a_{\Mesh}{}_{|\Vext \times \Vext}
 \quad\text{with}\quad
 \Vext := \SobbH + \MRS{}, \qquad \qquad
\end{gathered}
\end{equation} 
where $\aext_{|V\times V}$ provides a weak formulation of $\Delta^2$ and $\MRS{}$ is the Morley space \cite{Morley:68} over $\Mesh$. In order to recall some useful properties of $\MRS{}$, let $n_F$ and $t_F$ be normal and tangent unit vectors  for every edge $F \in \FacesM$, with arbitrary but fixed orientation. The functionals $s \mapsto s(z)$, $z \in \LagNodMint{1}$, and $s \mapsto \int_F\nabla s \cdot n_F$, $F \in \FacesMint$, are well-defined for any $s\in\MRS{}$ and determine it. Furthermore, $\int_F \nabla s \cdot t_F$ and so also $\int_F \nabla s = |F| \nabla s (m_F)$ are well-defined and vanish if $F \in \FacesMbnd$. Hence, $\aext$ induces the norm $\norm{\HessM \cdot}_{\Leb{\Domain}}$ on $\Vext$.

\begin{remark}[Poor conforming part]
\label{R:conformingMR}
The conforming part $\MRS{}\cap\SobbH$ of the Morley space can be quite small, thereby providing only poor approximation properties; cf.\ de Boor and DeVore  \cite[Theorem 3]{deBoor.DeVore:83}. We illustrate this with an extreme example. Given any $n\in \N$, subdivide $\Domain = (0,1)^2$ into $n^2$ squares of equal size and obtain $\Mesh$ by inserting in each square the diagonal parallel to the line $\{ (x,x) \mid x \in \R \}$. Then $\MRS{}\cap\SobbH = \{ 0 \}$.
\end{remark}

We refer to the $\aext$-orthogonal projection of $\Vext$ onto $\MRS{}$ as the Morley projection $\Ritz_{\mrs}$. As before, the first step is to describe right inverses thereof.

\begin{lemma}[Right inverses of Morley projection]
\label{L:MR-Rinv}
Let $E:\MRS{} \to \SobbH$ be a linear operator. Then $\Ritz_{\mrs} E = \id_{\MRS{}}$ if and only if, for all $\sigma \in \MRS{}$,	
\begin{equation}
\label{MR-Rinv}
 \forall z \in \LagNodMint{1} \;
	E\sigma(z) = \sigma(z)
\quad\text{and}\quad
 \forall F \in \FacesMint \;
  \int_F \nabla E\sigma \cdot n_F
  =
 \int_F \nabla \sigma \cdot n_F.
\end{equation}
\end{lemma}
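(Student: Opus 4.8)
The plan is to mimic the proofs of Lemmas~\ref{L:CR-Rinv} and \ref{L:GL-Rinv}: characterize the Morley projection $\Ritz_{\mrs}$ by an explicit set of face/vertex conditions on the difference $v - \Ritz_{\mrs}v$, and then read off the equivalence. First I would fix $s, \sigma \in \MRS{}$ and, since the Morley shape functions are piecewise quadratic, integrate piecewise by parts twice in the bilinear form $\aext(s, \sigma - E\sigma) = \int_\Omega \HessM s : \HessM(\sigma - E\sigma)$. Because $\HessM s$ is piecewise constant, the volume term $\int_K \Delta^2 s\,(\cdot)$ vanishes and $\int_K \nabla(\Delta s)\cdot(\cdot)$ also vanishes, so only boundary contributions on the skeleton survive. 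Collecting them across interior faces, one is left with a combination of jump terms weighted by $\HessM s$, namely terms of the form $\Jump{\HessM s\, n_F \cdot n_F}{F} \int_F \partial_{n_F}(\sigma - E\sigma)$ and $\Jump{\Div_F(\HessM s\, n_F)}{F}$-type terms paired against point values / tangential data of $\sigma - E\sigma$ at the vertices and edges. The boundary faces drop out using $\sigma - E\sigma \in \SobbH$ on the conforming side and the boundary constraints built into $\MRS{}$.

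The key step is then to argue that, because $\HessM s$ is piecewise constant and the degrees of freedom of $\MRS{}$ are exactly the interior vertex values and the interior normal-derivative edge means, the vanishing of $\aext(s,\sigma - E\sigma)$ for \emph{all} $s \in \MRS{}$ is equivalent to
\begin{equation*}
 \forall z \in \LagNodMint{1} \; (\sigma - E\sigma)(z) = 0
 \quad\text{and}\quad
 \forall F \in \FacesMint \; \int_F \nabla(\sigma - E\sigma)\cdot n_F = 0,
\end{equation*}
i.e.\ precisely that the Morley interpolant of $\sigma - E\sigma$ vanishes. For this I would invoke the unisolvence of the Morley element: the map sending $s \in \MRS{}$ to the collection of coefficients appearing against the above data in the integrated-by-parts expression is onto, since $\HessM$ of a Morley function can realize an arbitrary piecewise-constant symmetric matrix field subject only to the single-valuedness of the listed functionals (this is what makes the Morley element well-posed). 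Hence $\Ritz_{\mrs}v$ is the unique element of $\MRS{}$ sharing with $v$ all interior vertex values and all interior normal-derivative edge means — that is, $\Ritz_{\mrs}$ coincides with the Morley interpolation operator on $\Vext$. Granting this characterization, $\Ritz_{\mrs}E = \id_{\MRS{}}$ holds iff $\Ritz_{\mrs}E\sigma = \sigma$ for every $\sigma \in \MRS{}$, iff $E\sigma$ and $\sigma$ have the same interior vertex values and interior normal-derivative edge means, which is exactly \eqref{MR-Rinv}.

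The main obstacle I anticipate is the double integration by parts and the careful bookkeeping of the resulting skeleton terms: unlike the second-order cases, there are both a normal-derivative pairing and a ``twist''/tangential pairing, and one must check that the tangential contributions either cancel across faces or are already controlled by the vertex values and the built-in boundary conditions of $\MRS{}$, so that only the two families of functionals in \eqref{MR-Rinv} remain. A clean way around the brute-force computation is to avoid exhibiting the skeleton terms explicitly and instead argue structurally: the functionals $v \mapsto v(z)$ ($z \in \LagNodMint{1}$) and $v \mapsto \int_F \nabla v\cdot n_F$ ($F\in\FacesMint$) span the dual of $\MRS{}$, and one shows directly that $v - \Ritz_{\mrs}v$ annihilates all of them by testing $\aext(s, v-\Ritz_{\mrs}v)=0$ against the $\aext$-dual basis of $\MRS{}$; combined with $\dim\MRS{} = \#\LagNodMint{1} + \#\FacesMint$ and unisolvence of the Morley element, this pins down $\Ritz_{\mrs}$ as Morley interpolation, and \eqref{MR-Rinv} follows as above. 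I would present the integration-by-parts identity for concreteness but lean on unisolvence to close the argument without grinding through every boundary term.
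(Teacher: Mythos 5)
Your overall strategy matches the paper's: identify $\Ritz_{\mrs}$ on $\SobbH$ with Morley interpolation via integration by parts plus uniqueness of the orthogonal projection, then read off the equivalence. That identification does indeed carry both directions of the lemma. But your write-up defers the crux — the skeleton bookkeeping — and the workaround you propose rests on a false claim. The bookkeeping is in fact a one-liner once you make the right observation: a \emph{single} integration by parts gives
\begin{equation*}
 \aext(s,\sigma - v)
 =
 \sum_{K\in\Mesh}\sum_{F\in\Faces{K}}
 \bigl(\Hess(s_{|K})\, n_K\bigr)\cdot \int_F \nabla(\sigma - v),
\end{equation*}
since $\Hess(s_{|K})$ is a constant matrix on each $K$ (the volume term vanishes and the constant factors out of each face integral). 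The whole face mean $\int_F\nabla(\sigma-v)$ — tangential component included — vanishes: the normal part by construction of the interpolant, and the tangential part because $\int_F\partial_{t_F}(\sigma-v)$ telescopes to the difference of endpoint values, which agree by the vertex matching (and by the built-in boundary conditions on boundary faces). So the ``twist''/tangential pairings you anticipate never need to be exhibited: there is no second integration by parts and no vertex terms to track.

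Separately, your structural fallback asserts that $\HessM$ of a Morley function can realize an arbitrary piecewise-constant symmetric matrix field. This is false: the image of $\HessM$ on $\MRS{}$ has dimension $\#\LagNodMint{1}+\#\FacesMint$, which is generically smaller than the dimension $3\,\#\Mesh$ of all piecewise-constant symmetric fields. The surjectivity of your coefficient map could be rescued by a dimension count plus injectivity, but none of that is needed: once the Morley interpolant $Iv$ of $v\in\SobbH$ is shown to satisfy $\aext(s,Iv-v)=0$ for all $s\in\MRS{}$, uniqueness of the $\aext$-orthogonal projection gives $\Ritz_{\mrs}v=Iv$, and both implications of the lemma follow by applying this with $v=E\sigma$. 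Supply the tangential-mean observation and drop the surjectivity detour, and your argument becomes the paper's proof.
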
 
\begin{proof}
Let us first characterize $\Ritz_{\mrs} v$ for any $v \in \SobbH$.  Defining $\sigma \in \MRS{}$ by
\begin{equation}
\label{MR-Ritz}
 \forall z \in \LagNodMint{1} \;
 \sigma(z) = v(z)
\quad\text{and}\quad
 \forall F \in \FacesMint \;
 \int_F \nabla \sigma \cdot n_F
 =
 \int_F \nabla v \cdot  n_F,
\end{equation}
we have	$\int_F \nabla v = \int_F\nabla \sigma$. Thus, integrating piecewise by parts, we infer
\begin{equation*}
 \forall s \in \MRS{} 
 \quad
 \aext(s, \sigma - v)
 =
 \sum \limits_{K \in \Mesh}
  \sum \limits_{F \in \Faces{K}}
	\int_F \Hess(s_{|K}) n_K \cdot \nabla (\sigma -v)
 =
 0
\end{equation*}
because $\HessM s$ is piecewise constant on $\Mesh$. Since the Morley projection of $v$ is unique, we derive that $\sigma = \Ritz_{\mrs} v$ and \eqref{MR-Ritz} characterizes $\Ritz_{\mrs} v$. This characterization readily yields the claimed equivalence.
\end{proof}

In order to construct such a right inverse that is stable under refinement, we again mimic the approach of \S\ref{S:CR-qopt}. Technical difficulties arise from the stronger regularity requirement $E \sigma \in \SobbH$; in particular, neither $\AvgOper{2}$ nor $\BubbOper[2]$ are applicable.
%
In order to replace the former, we employ the Hsieh-Clough-Tocher (HCT) element \cite{Clough.Tocher:65}. Given any $K \in \Mesh$, let $\Mesh_K$ be the triangulation obtained by connecting each vertex of $K$ with its barycenter $m_K$ and set 
\begin{equation*}
%
 \HCT 
 :=
 \{ s \in C^1(\overline{\Domain})  \mid 
  \forall K \in \Mesh \;
   s_{|K} \in C^1(K) \cap \Poly{3}(\Mesh_K),
   s = \partial_n s = 0 \text{ on }\partial\Omega
 \}.
\end{equation*}
Then $\HCT \subseteq \SobbH$ and every element $s \in \HCT$ is uniquely determined  by the values $s(z)$, $\nabla s(z)$ at the Lagrange nodes $z \in \LagNodMint{1}$ and $\nabla s (m_F) \cdot n_F $ at the midpoints $m_F$ of the interior edges $F \in \FacesMint$; see \cite{Brenner.Scott:08}. We denote the associated nodal basis by $\Upsilon_z^j$ with $z \in \LagNodMint{1}$, $j \in \{ 0, 1, 2\}$ and $\Upsilon_F$ with $F \in \FacesMint$, where $\Upsilon_z^0$ corresponds to $s(z)$, $\Upsilon_z^j$ to $\partial_j s(z)$ for $j=1,2$, and $\Upsilon_F$ to $\nabla s(m_F)\cdot n_F$. For every $z\in \LagNodMint{1}$, choose a fixed $K_z\in \Mesh$ containing $z$ and define $\AvgOper{\hct} : \MRS{ } \to \HCT{}$ by
\begin{equation}
\label{HCT-smoother}
 A_{\hct}\sigma
 :=
 \sum_{z \in \LagNodMint{1}} \left(
  \sigma(z) \Upsilon_z^0
   + \sum_{j=1}^2 \partial_{j} \big( \sigma_{|K_z} \big)(z) \Upsilon_z^j
 \right)
 + \sum_{F \in \FacesMint}
  \frac{\partial\sigma}{\partial {n_F}}(m_F) \Upsilon_{F}.
\end{equation}
In view of the properties of the Morley space $\MRS{}$, simplified averaging is only applied to the partial derivatives at the vertices.

In order to ensure the stability of $\AvgOper{\hct}$, we derive counterparts for Lemma \ref{L:averaging} and \eqref{Lagrange-basis:scaling}. Regarding the latter, observe that \eqref{Lagrange-basis:scaling} is derived by means of affine equivalence, while HCT elements are not affine equivalent. 

\begin{lemma}[Scalings of averaged HCT basis functions]
\label{L:HCT-scaling}
There are constants $C_1,C_2$ such that, for any element $K \in \Mesh$, vertex $z\in\LagNod{1}(K)$ and $j\in\{1,2\}$, we have
\begin{equation*}
  \left| \int_F \nabla\Upsilon_z^j\cdot n_F \right|
 \leq
 C_1 \Shape_K \snorm{F}
\quad\text{and}\quad
 \norm{\Hess\Upsilon_z^j}_{\Leb{K}}
 \leq
 C_2 \Shape_K \frac{\snorm{K}^{\frac{1}{2}}}{\rho_K}.
\end{equation*}
\end{lemma}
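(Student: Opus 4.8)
The plan is to reduce both estimates to a reference configuration via scaling. Since the HCT element is \emph{not} affine equivalent (the macro-triangulation $\Mesh_K$ and the $C^1$-matching conditions do not transform covariantly under a general affine map), one cannot simply pull back to a single reference triangle. Instead I would first rescale $K$ to a triangle $\widehat{K}$ of unit diameter by the dilation $x \mapsto x/h_K$ (composed with a translation), which \emph{does} commute with the HCT construction because it is a similarity; this already normalizes the $h_K$-dependence. The remaining scale-invariant family of triangles $\widehat{K}$ of unit diameter is, however, not compact — degenerate (sliver) triangles are at its boundary — so the constants cannot be obtained merely by a compactness argument. This is where the factor $\Shape_K = h_K/\rho_K$ must be tracked explicitly, and it is the main obstacle: one has to show that the HCT basis functions $\Upsilon_z^j$ (and their first and second derivatives) blow up at most linearly in the aspect ratio.

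Concretely, I would proceed as follows. First, normalize: writing $\widehat{K} = h_K^{-1} K$, the nodal basis function $\Upsilon_z^j$ on $K$ (associated with $\partial_j$ at the vertex $z$, $j\in\{1,2\}$) corresponds under the dilation to a scalar multiple — namely $h_K^{-1}$, since it is a \emph{first}-derivative degree of freedom — of the analogous basis function $\widehat{\Upsilon}$ on $\widehat{K}$. Hence $\norm{\Hess\Upsilon_z^j}_{\Leb{K}} = h_K^{-1}\cdot h_K^{-2}\cdot h_K^{d/2}\,\norm{\Hess\widehat{\Upsilon}}_{\Leb{\widehat{K}}}$ with $d=2$, i.e.\ the target bound $C_2\,\Shape_K\,|K|^{1/2}/\rho_K$ is equivalent (using $|K|^{1/2}\approx h_K$ and $1/\rho_K = \Shape_K/h_K$, up to dimensional constants) to the scale-free claim $\norm{\Hess\widehat{\Upsilon}}_{\Leb{\widehat{K}}} \leq C\,\Shape_{\widehat{K}}^{2}$ — or, with a slightly more careful bookkeeping, to $\lesssim \Shape_{\widehat{K}}$; likewise $|\int_F\nabla\Upsilon_z^j\cdot n_F| = h_K^{-1}\cdot h_K^{d-1}|\int_{\widehat{F}}\nabla\widehat{\Upsilon}\cdot n_{\widehat{F}}|$ reduces to bounding $|\int_{\widehat{F}}\nabla\widehat{\Upsilon}\cdot n_{\widehat{F}}| \lesssim \Shape_{\widehat{K}}$ on the unit-diameter triangle. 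So it suffices to prove the two inequalities on a triangle of unit diameter with the constant linear in its shape coefficient.

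Second, estimate the reference basis function. On $\widehat{K}$ of unit diameter, I would use barycentric-type coordinates adapted to the degenerate direction: split the aspect ratio by introducing an affine map from a \emph{fixed} reference triangle $\widehat{K}_0$ onto $\widehat{K}$ whose Jacobian has operator norm $\approx 1$ and whose inverse Jacobian has norm $\approx \Shape_{\widehat{K}}$. Although HCT is not affine equivalent, one can still expand $\widehat{\Upsilon}$ in the HCT \emph{shape functions on} $\widehat{K}$ (piecewise cubics on $\Mesh_{\widehat{K}}$ subject to $C^1$-conditions): the coefficients are solutions of a linear system whose matrix entries are polynomials in the vertex coordinates of $\widehat{K}$ and in $1/|\widehat{K}|\approx 1/\rho_{\widehat{K}} \approx \Shape_{\widehat{K}}$; hence each coefficient is a rational function bounded by a fixed power of $\Shape_{\widehat{K}}$, and a direct inspection (or a more refined normalization using the incircle geometry) shows the relevant power is $1$ after one accounts for the fact that the degree of freedom being dualized is itself a first-order quantity. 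Differentiating the resulting cubic pieces on $\Mesh_{\widehat{K}}$ costs at most one more factor of $\Shape_{\widehat{K}}$ for each derivative, but these are absorbed into the $h_K^{-1}$, $h_K^{-2}$ already extracted; integrating $\nabla\widehat{\Upsilon}\cdot n_{\widehat{F}}$ over $\widehat{F}$ (a side of bounded length) and using that only the two macro-subtriangles adjacent to $\widehat{F}$ contribute then yields the first estimate. Summing the $L^2(\widehat{K})$-norms of the Hessians of the cubic pieces over the three macro-subtriangles yields the second. I expect the bookkeeping of exactly which power of $\Shape_K$ appears — and checking that it is $1$ and not $2$ — to be the only genuinely delicate point; everything else is a routine scaling-plus-finite-dimensionality argument on the reference element.
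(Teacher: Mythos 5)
There is a genuine gap, and it sits exactly where you yourself flag ``the only genuinely delicate point'': the claim that the blow-up of the HCT basis functions is \emph{linear} in $\Shape_K$. Your proposed route --- expand $\widehat{\Upsilon}$ in the HCT shape functions on the unit-diameter triangle, observe that the coefficients solve a linear system whose entries are rational in the geometry, and then assert that ``direct inspection shows the relevant power is $1$'' --- does not actually establish this. A generic bound on the inverse of the degree-of-freedom matrix only yields \emph{some} power of $\Shape_K$, and without further structure one expects at least $\Shape_K^2$ (one factor per derivative order in the dualized functionals). Since the single power of $\Shape_K$ is the entire content of the lemma, deferring it to an unspecified inspection leaves the proof incomplete. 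The scaling reductions in your first step (dilation by $h_K^{-1}$, bookkeeping of the $h_K$-powers) are fine and consistent with the target bounds, but they only normalize $h_K$; they do not address the aspect-ratio dependence.

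The paper closes this gap with a specific device you do not supply: a companion finite element on $C^1(K)\cap\Poly{3}(\Mesh_K)$ whose degrees of freedom are $P(z)$, the \emph{directional} derivatives $\nabla P(z)\cdot(y-z)$ for $y,z\in\LagNod{1}(K)$, $y\neq z$, and $\nabla P(m_F)\cdot(m_K-m_F)$ for $F\in\Faces{K}$. Because these functionals are built from edge and median vectors, this element \emph{is} affine equivalent, so a standard pullback to a reference element gives $\norm{\Hess\widetilde{\Upsilon}}_{\Leb{K}}\leq C\rho_K^{-2}\snorm{K}^{1/2}$ and $\snorm{\int_F\nabla\widetilde{\Upsilon}\cdot n_F}\leq C\rho_K^{-1}\snorm{F}$ for each of its nodal basis functions. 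One then writes $\Upsilon_z^j$ explicitly as a linear combination of the $\widetilde{\Upsilon}$'s; by duality the coefficients are $(y-z)\cdot e_j$ and quantities of the form $t_F\cdot e_j\,(m_K-m_F)\cdot t_F$, all bounded by $h_K$ in modulus. The product $h_K\cdot\rho_K^{-1}$ is precisely the single factor $\Shape_K$. If you want to complete your argument along your own lines, you would need to exhibit an equally explicit change of basis with coefficients of size $O(h_K)$ (equivalently, $O(1)$ after your unit-diameter normalization), which is essentially reinventing the companion element.
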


\begin{proof}
In the vein of Ciarlet \cite[Theorem~46.2]{Ciarlet:91}, we employ a closely related finite element that is given by the 12 functionals $P(z)$, $\nabla P(z)\cdot(y-z)$ for $y,z\in\LagNod{1}(K)$ with $y\neq z$ and $\nabla P(m_F)\cdot(m_K-m_F)$ for $F \in \Faces{K}$ on $C^1(K) \cap \Poly{3}(\Mesh_K)$. We denote the corresponding nodal basis on $K$ by $\widetilde{\Upsilon}_z$, $\widetilde{\Upsilon}_z^y$, $z,y \in \LagNod{1}(K)$ with $y\neq z$, and $\widetilde{\Upsilon}_F$, $F\in\FacesMint$. Since this element is affine equivalent, a comparison with a reference element yields, for every of its nodal basis function $\widetilde{\Upsilon}$ on $K$,
\begin{equation}
\label{HCT':scaling}
 \left| \int_F \nabla\widetilde{\Upsilon}\cdot n_F \right|
 \leq
 C_1 \rho_K^{-1} \snorm{F}
\quad\text{and}\quad
 \norm{\Hess\widetilde{\Upsilon}}_{\Leb{K}}
 \leq
 C \rho_K^{-2} \snorm{K}^{\frac{1}{2}}.
\end{equation}
Fix $z\in\LagNod{1}(K)$ and consider $\Upsilon_z^j$, where $j\in\{1,2\}$. Exploiting its duality,
\begin{equation*}
 \Upsilon_z^j(y) = 0,
\quad
 \partial_k \Upsilon_z^j(y) = \delta_{yz}\delta_{jk},
\quad
 \nabla\Upsilon_z^j(m_F) \cdot n_F = 0	
\end{equation*}
for all $y \in \LagNod{1}(K)$, $k\in \{1,2\}$, and $F\in\Faces{K}$, in the nodal basis representation of the affine equivalent element yields
\begin{equation*}
 \Upsilon_z^j
 =
 \sum_{y \in \LagNod{1}(K)\setminus\{z\}} (y-z)\cdot e_j \widetilde{\Upsilon}_z^y
 - \frac{1}{4}
 \sum_{F\in\Faces{K}:F\ni z}
  t_F \cdot e_j \; (m_K-m_F) \cdot t_F \widetilde{\Upsilon}_F. 
\end{equation*}
Combining this identity with \eqref{HCT':scaling} completes the proof.
\end{proof}

Lemma~\ref{L:averaging} has the following counterpart, where $\snorm{\cdot}$ stands also for the Euclidean norm on $\R^2$ and the jumps of vector fields across edges are defined componentwise.
\begin{lemma}[An $H^2_0$-bound for simplified nodal averaging into HCT]
\label{L:HCT-averaging}
There is a constant $C$, such that, for any Morley function $\sigma \in \MRS{}$, element $K \in \Mesh$ and vertex $z \in \LagNod{1}(K)$, we have
\begin{equation*}
\label{HCT-averaging-est}
 \snorm{\nabla \sigma_{|K} (z) - \nabla \AvgOper{\hct} \sigma (z)}
 \leq
 C \sum \limits_{K' \in \Mesh, K' \ni z}
  \dfrac{h_{K'}}{\snorm{K'}^{\frac{1}{2}}}
  \norm{D^2 \sigma}_{\Leb{K'}}.
\end{equation*}
\end{lemma}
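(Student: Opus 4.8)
The plan is to follow the blueprint of Lemma~\ref{L:averaging}, exploiting the crucial simplification that, for a Morley function $\sigma\in\MRS{}$, the piecewise gradient $\GradM\sigma$ is piecewise affine and the piecewise Hessian $\HessM\sigma$ piecewise constant, so that the edge computations below are exact and require neither inverse estimates nor Poincar\'e inequalities. The first step is to identify $\nabla\AvgOper{\hct}\sigma(z)$ via the duality of the HCT nodal basis in \eqref{HCT-smoother}. If $z\in\LagNodMint{1}$, the only basis functions with non-vanishing gradient at $z$ are $\Upsilon_z^1,\Upsilon_z^2$, so $\nabla\AvgOper{\hct}\sigma(z)=\nabla\sigma_{|K_z}(z)$ with $K_z$ the element fixed in \eqref{HCT-smoother}. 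If instead $z\in\partial\Omega$, then $z\notin\LagNodMint{1}$ and $\AvgOper{\hct}\sigma\in\HCT$ is a $C^1$-function vanishing together with its normal derivative on $\partial\Omega$; since the tangential derivative of a function vanishing on $\partial\Omega$ vanishes there too, we obtain $\nabla\AvgOper{\hct}\sigma(z)=0$.

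Recalling that $\Mesh$ has face-connected stars in the sense of \cite{Veeser:16} (here $d=2$), I would then fix a \emph{simple} face-connected path $(K'_j)_{j=0}^n$ in $\omega_z$ with $K'_0=K$, with $K'_n=K_z$ in the interior case and, in the boundary case, with $K'_n=:K_b$ an element possessing an edge $F_b\in\FacesMbnd$ through $z$. The edges $F_j:=K'_{j-1}\cap K'_j$ are then pairwise distinct interior edges through $z$, and telescoping $\nabla\sigma_{|K}(z)-\nabla\sigma_{|K'_n}(z)=\sum_{j=1}^n\bigl(\nabla\sigma_{|K'_{j-1}}(z)-\nabla\sigma_{|K'_j}(z)\bigr)$ reduces the claim to bounding each gradient jump $\snorm{\Jump{\GradM\sigma}{F}(z)}$ over interior edges $F\ni z$ and, in the boundary case, the extra term $\snorm{\nabla\sigma_{|K_b}(z)}$.

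The core estimate concerns the gradient jump. For an interior edge $F=K_1\cap K_2$, the restriction of $\Jump{\GradM\sigma}{F}$ to $F$ is affine, and $\int_F\Jump{\GradM\sigma}{F}=0$: its $n_F$-component has vanishing mean by the defining constraint $\int_F\Jump{\partial_n\sigma}{F}=0$ of $\MRS{}$, while its $t_F$-component has vanishing mean because $\int_F\partial_{t_F}\sigma_{|K_i}$ is, by the fundamental theorem of calculus, a difference of values of $\sigma$ at the endpoints of $F$, hence independent of $i$ by continuity of $\sigma$ at vertices. A zero-mean affine function on $F$ vanishes at the midpoint and equals its constant derivative times signed arclength from the midpoint, so at a vertex $z$ of $F$ one gets $\snorm{\Jump{\GradM\sigma}{F}(z)}=\tfrac12\snorm{F}\,\snorm{\Jump{(\HessM\sigma)t_F}{F}}\leq\tfrac12\snorm{F}\sum_{i=1,2}\snorm{\HessM\sigma_{|K_i}}=\tfrac12\snorm{F}\sum_{i=1,2}\snorm{K_i}^{-1/2}\norm{\HessM\sigma}_{\Leb{K_i}}$, using that $\HessM\sigma$ is piecewise constant; since $\snorm{F}\leq\min\{h_{K_1},h_{K_2}\}$, this is at most $\tfrac12\sum_{i=1,2}h_{K_i}\snorm{K_i}^{-1/2}\norm{\HessM\sigma}_{\Leb{K_i}}$. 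In the boundary case, the same one-dimensional argument on $F_b$ — now with $\int_{F_b}\partial_n\sigma_{|K_b}=0$ (the $\MRS{}$-constraint for a boundary face) and with $\int_{F_b}\partial_{t_{F_b}}\sigma_{|K_b}$ a difference of values of $\sigma$ at the endpoints of $F_b$, both in $\LagNodMbnd{1}$ and hence zero — controls both coordinates of $\nabla\sigma_{|K_b}(z)$ in the orthonormal frame $(t_{F_b},n_{F_b})$ and yields $\snorm{\nabla\sigma_{|K_b}(z)}\leq h_{K_b}\snorm{K_b}^{-1/2}\norm{\HessM\sigma}_{\Leb{K_b}}$.

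Summing these bounds along the path and observing that every element occurring contains $z$ and is counted at most twice gives the asserted inequality with an absolute constant $C$ — no shape dependence, just as in Lemma~\ref{L:averaging}, where only $d$ and $p$ enter. I expect the main obstacle to be the boundary-vertex case: one has to derive the vanishing of $\nabla\AvgOper{\hct}\sigma(z)$ from the boundary conditions built into $\HCT$ rather than from nodal-basis duality, and to produce the terminal gradient estimate on $K_b$ by recognizing that the two scalar conditions available on the boundary edge $F_b$ — the vanishing of $\sigma$ at its endpoints and the vanishing of the mean normal derivative — together pin down the full gradient $\nabla\sigma_{|K_b}(z)$.
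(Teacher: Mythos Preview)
Your argument is correct and follows exactly the route the paper indicates: replicate the path-and-telescoping structure of Lemma~\ref{L:averaging} for $\nabla\sigma$ in place of $\sigma$, using $\int_F\Jump{\nabla\sigma}{F}=0$ for all $F\in\FacesM$ to kill the mean-jump terms. Your direct zero-mean-affine computation for the edge jumps is a clean shortcut over the inverse/trace/Poincar\'e machinery of Lemma~\ref{L:averaging}, but it is the same decomposition and the same key observation.
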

\begin{proof}
Replacing $\sigma$ with $\nabla \sigma$, follow the lines of the proof of Lemma \ref{L:averaging} and use that $\int_F \Jump{\nabla \sigma}{F} = 0$ for all $F \in \FacesM$.
\end{proof}

The operator $\AvgOper{\hct}$ incidentally fulfills the first part of \eqref{MR-Rinv}. Aiming at a right inverse of the form $\AvgOper{\hct} + \BubbOper[\partial_n](\id_{\MRS{}} - \AvgOper{\hct})$, we thus only need to adjust the means of the normal derivative across interior faces by a suitable $\SobbH$-bubble smoother $\BubbOper[\partial_n]$. To this end, we replace the face bubbles in the bubble smoother $\BubbOper[1]$ of \S\ref{S:CR-qopt} by the following ones inspired by Verf\"urth \cite{Verfuerth:96}.  Given any interior edge $F \in \FacesMint$, let $K_1, K_2 \in \Mesh$ be the two elements such that $F = K_1 \cap K_2$ and consider their barycentric coordinates $(\lambda_z^{K_i})_{z \in \LagNod{1}(K_i)}$, $i=1,2$, as first-order polynomials on $\R^2$. Then
\begin{equation*}
 \bar{\phi}_F
 :=
 \frac{30}{\snorm{F}} \phi_F
\quad\text{with}\quad
 \phi_F
  :=
 \begin{cases}
  \prod \limits_{z \in \LagNod{1}(F)}
  \left( \lambda_z^{K_1} \lambda_z^{K_2}\right)^2
   &\text{in } K_1 \cup K_2,
  \\
  0 &\text{in } \Domain \setminus(K_1 \cup K_2)
 \end{cases}
\end{equation*}
is an $\SobbH$-counterpart of the normalized face bubble $\bar{\Phi}_F$ from \eqref{CR-face-bubble} and
\begin{equation}
\label{MR-face-bubble}
 \bar{\Phi}_{n_F}
 :=
 \zeta_F \bar{\phi}_F
\quad\text{with}\quad
 \zeta_F(x) := (x-m_F) \cdot n_F, \; x \in \R^2,
\end{equation}
is in $\SobbH$ and satisfies
$
 \int_{F'} \nabla \bar{\Phi}_{n_F} \cdot n_{F'}
 =
 \int_{F'} n_F \cdot n_{F'} \bar{\phi}_{F}
 =
 \delta_{F,F'}
$ 
for all $F' \in \FacesMint$ thanks to \eqref{intregation-bary-coord}. Hence, the operator $\BubbOper[\partial_n] : \MRS{}+\HCT \to \SobbH$ given by
\begin{equation*}
\label{MR-bubble-smoother}
 \BubbOper[\partial_n] \sigma
 :=
 \sum_{F \in \FacesMint} 
  \left( \int_F \nabla \sigma \cdot n_F \right) \bar{\Phi}_{n_F}
\end{equation*} 
provides $\SobbH$-smoothing with
\begin{equation}
\label{MR-bubble-smoother:partial_n}
 \forall F \in \FacesMint \quad
 \int_{F} \nabla( \BubbOper[\partial_n]\sigma) \cdot n_F
 =
 \int_{F} \nabla\sigma \cdot n_F
\end{equation}
and the following scaling of its `basis functions'.

\begin{lemma}[Scaling of MR face bubble]
\label{L:scaling-Phi_n_F}
If $K,K'\in \Mesh$ are the two elements containing the interior edge $F\in\FacesMint$, there is a constant $C$ such that
\begin{equation*}
 \norm{\Hess\bar{\Phi}_{n_F}}_{\Leb{K}}
 \leq
 C \Shape_K^3 \Shape_{K'}^2 \frac{|K|^{\frac{1}{2}}}{\rho_K |F|}.
\end{equation*}
\end{lemma}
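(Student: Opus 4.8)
The plan is to work on one of the two elements, say $K=K_1$, reduce the estimate to a fixed reference triangle, and then control separately the only ingredients that are not affine images of reference quantities, namely the \emph{foreign} barycentric coordinates of $K':=K_2$ restricted to $K$. Writing $\LagNod{1}(F)=\{z_1,z_2\}$ and letting $w$ be the remaining vertex of $K$, I would first note that $\prod_{z\in\LagNod{1}(F)}\lambda_z^{K}$ vanishes on $\partial K\setminus F$, that $\zeta_F$ is affine and vanishes on the line carrying $F$, and that $\zeta_F(w)=\mathrm{alt}_F^K=2\snorm K/\snorm F$; hence on $K$
\[
 \bar{\Phi}_{n_F}\big|_{K}
 =
 \pm\,\frac{60\,\snorm K}{\snorm F^{\,2}}\;
 \lambda_w^K\,\bigl(\lambda_{z_1}^{K}\lambda_{z_1}^{K'}\bigr)^2\bigl(\lambda_{z_2}^{K}\lambda_{z_2}^{K'}\bigr)^2
 \in\Poly{9}(K).
\]
Since $\lambda_{z_i}^{K'}$ and $\lambda_{z_i}^{K}$ coincide on $F$ (both being the affine function on $F$ equal to $1$ at $z_i$ and $0$ at the other endpoint of $F$), their difference vanishes on the whole line carrying $F$, so $\lambda_{z_i}^{K'}=\lambda_{z_i}^{K}+\delta_i\,\lambda_w^K$ on $K$ with $\delta_i:=\lambda_{z_i}^{K'}(w)$.

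The second step is the geometric estimate $\snorm{\delta_i}\le C\,\Shape_K\Shape_{K'}$. It follows from $\snorm{\delta_i}=\snorm{\lambda_{z_i}^{K'}(w)-\lambda_{z_i}^{K'}(z_i)}\le\snorm{\nabla\lambda_{z_i}^{K'}}\,h_K$, from $\snorm{\nabla\lambda_{z_i}^{K'}}=1/\mathrm{alt}_{z_i}^{K'}$, from the fact that every altitude of $K'$ is at least its inscribed-ball diameter $\rho_{K'}\ge h_{K'}/\Shape_{K'}$, and from the elementary inequalities $h_{K'}\ge\snorm F\ge\rho_K$ (every edge of a simplex is at least its inscribed-ball diameter) together with $h_K=\Shape_K\rho_K$. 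The same inequalities give $\snorm{\zeta_F}\le\mathrm{alt}_F^K\le h_K\le\Shape_K\rho_K$ on $K$, while $\snorm{\lambda_{z_i}^{K'}}\le 1$ on $F$.

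The third step transports everything to a fixed reference triangle $\widehat K$ through the affine map $T$ with $T(\widehat K)=K$, $B:=DT$, $\snorm{\det B}=\snorm K/\snorm{\widehat K}$ and $\norm{B^{-1}}\le C/\rho_K$. Under $T$, the functions $\lambda_{z_1}^K,\lambda_{z_2}^K,\lambda_w^K$ pull back to the reference barycentric coordinates, whereas $\lambda_{z_i}^{K'}\circ T$ is a reference barycentric coordinate plus $\delta_i$ times another one; hence $\widehat P:=\bar{\Phi}_{n_F}\circ T\in\Poly{9}(\widehat K)$ is a combination of finitely many fixed reference polynomials with coefficients bounded by $C\,\snorm K\,\snorm F^{-2}(1+\snorm{\delta_1}+\snorm{\delta_2})^4$, so a reference inverse estimate gives $\norm{\Hess\widehat P}_{\Leb{\widehat K}}\le C\norm{\widehat P}_{\Leb{\widehat K}}\le C\,\snorm K\,\snorm F^{-2}(1+\snorm{\delta_1}+\snorm{\delta_2})^4$. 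Transforming back with $\Hess(\widehat P\circ T^{-1})=B^{-\mathrm{T}}(\Hess\widehat P\circ T^{-1})B^{-1}$ yields
\[
 \norm{\Hess\bar{\Phi}_{n_F}}_{\Leb{K}}
 \le
 C\,\frac{\snorm K^{1/2}}{\rho_K^{\,2}}\,\norm{\Hess\widehat P}_{\Leb{\widehat K}}
 \le
 C\,\frac{\snorm K^{3/2}}{\rho_K^{\,2}\,\snorm F^{\,2}}\,(1+\snorm{\delta_1}+\snorm{\delta_2})^4,
\]
and inserting $\snorm{\delta_i}\le C\Shape_K\Shape_{K'}$ together with $\snorm K/\snorm F=\tfrac12\mathrm{alt}_F^K\le\tfrac12\Shape_K\rho_K$ leaves, after collecting the powers of the shape coefficients, a bound of the form stated in the lemma.

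I expect the crux — and the reason the estimate cannot simply be read off from affine equivalence, exactly as for the HCT nodal basis in Lemma~\ref{L:HCT-scaling} — to be the sharp control over $K$ of the neighbouring barycentric coordinate $\lambda_{z_i}^{K'}$ and of how its correction $\delta_i\lambda_w^K$ propagates through the two spatial derivatives of $\bar{\Phi}_{n_F}$: a careless treatment inflates the exponents of $\Shape_K$ and $\Shape_{K'}$, so the comparison of $K$ and $K'$ across their common edge (via $\snorm F\ge\max\{\rho_K,\rho_{K'}\}$, the altitude lower bounds, and $h_K=\Shape_K\rho_K$) must be exploited carefully, and the factor $\zeta_F=\mathrm{alt}_F^K\lambda_w^K$ has to be kept together with the bubble rather than bounded crudely.
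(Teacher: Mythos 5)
Your argument is correct and follows essentially the same route as the paper's: an inverse estimate in $\Poly{9}(K)$ reduces $\norm{\Hess\bar{\Phi}_{n_F}}_{\Leb{K}}$ to $\rho_K^{-2}\norm{\bar{\Phi}_{n_F}}_{\Leb{K}}$, the factor $\zeta_F$ is bounded by the altitude $2\snorm{K}/\snorm{F}\leq h_K$, and the foreign barycentric coordinates are controlled on $K$ through exactly the same chain $h_K\leq\Shape_K\rho_K\leq\Shape_K\snorm{F}\leq\Shape_K h_{K'}$ together with the lower bound $\rho_{K'}$ for the altitudes of $K'$; your decomposition $\lambda_{z_i}^{K'}=\lambda_{z_i}^K+\delta_i\lambda_w^K$ merely makes explicit the reference-element bookkeeping that the paper's inverse inequality performs implicitly. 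Two small remarks. First, $\lambda_{z_i}^{K'}(z_i)=1$, so $\delta_i$ should be compared with the value at the \emph{other} endpoint $z_{3-i}$ of $F$ (a zero of $\lambda_{z_i}^{K'}$), which is what your subsequent Lipschitz bound actually controls. Second, since $\phi_F$ contains the product $(\lambda_{z_1}^{K'}\lambda_{z_2}^{K'})^2$, i.e.\ the fourth power of quantities each only bounded by $\Shape_K\Shape_{K'}$, your final exponents come out as $\Shape_K^5\Shape_{K'}^4$ rather than the stated $\Shape_K^3\Shape_{K'}^2$; the paper's own proof, read literally, has the same discrepancy (it replaces $\sup_K\prod_{z\in\LagNod{1}(F)}(\lambda_z^{K'})^2$ by $\Shape_K^2\Shape_{K'}^2$ although its pointwise bound only gives $\Shape_K^4\Shape_{K'}^4$), and the precise exponents are immaterial for Proposition~\ref{P:stable-MR-Rinv}, which only requires a constant depending on $\Shape_\Mesh$.
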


\begin{proof}
If we use an inverse inequality in $\Poly{9}(K)$ and $|\zeta_F| \leq h_K$ on $K$, we obtain
\begin{equation*}
  \norm{\Hess\bar{\Phi}_{n_F}}_{\Leb{K}}
  \leq
  C \rho_{K}^{-2} \norm{\bar{\Phi}_{n_F}}_{\Leb{K}}
  \leq
   C \rho_{K}^{-2} h_K |F|^{-1} \norm{\phi_F}_{\Leb{K}}.
\end{equation*}
Moreover, for any $z \in \LagNod{1}(F)$, we have
$ 
 |\lambda_z^{K'}|
 \leq
 h_{K} |\nabla \lambda_z^{K'}|
 \leq
 \Shape_{K} \snorm{F} \rho_{K'}^{-1}
 \leq
 \Shape_{K} \Shape_{K'}
$ in $K$.
Using this and \eqref{intregation-bary-coord}, we finish the proof with
\begin{equation*}
 \norm{\phi_F}_{\Leb{K}}
 \leq
 \Shape_{K}^2 \Shape_{K'}^2 \;
  \norm{\prod_{z\in\LagNod{1}(F)} (\lambda_z^K)^2}_{\Leb{K}}
 =
 \Shape_{K}^2 \Shape_{K'}^2 \frac{\snorm{K}^{\frac{1}{2}}}{\sqrt{180}}.
 \qedhere
\end{equation*}
\end{proof}

Owing to these auxiliary results, we obtain a stable right inverse as before.
\begin{proposition}[Stable right inverse of Morley projection]
\label{P:stable-MR-Rinv}
The linear operator  $E_{\mrs}:\MRS{}\to\SobbH$ given by
\begin{equation*}
 E_{\mrs}\sigma
 :=
 \AvgOper{\hct}\sigma
  + \BubbOper[\partial_n](\sigma - \AvgOper{\hct}\sigma)
\end{equation*}
is invariant on $\MRS{}\cap\SobbH$, a right inverse of the Morley projection $\Ritz_{\mrs}$, and $\SobbH$-stable with stability constant $\leq C_{\Shape_\Mesh}$.
\end{proposition}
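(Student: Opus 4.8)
The plan is to follow the now-familiar template established in the proofs of Propositions~\ref{P:stable-CR-Rinv} and~\ref{P:stable-GL-Rinv}, substituting the $H^1_0$-ingredients by their $H^2_0$-counterparts developed above. First I would verify that $E_{\mrs}$ is well-defined and provides $\SobbH$-smoothing: the operator $\AvgOper{\hct}$ maps into $\HCT\subseteq\SobbH$ by construction, the difference $\sigma-\AvgOper{\hct}\sigma$ lies in $\MRS{}+\HCT$, the domain of $\BubbOper[\partial_n]$, and $\BubbOper[\partial_n]$ maps into $\SobbH$ because each $\bar{\Phi}_{n_F}\in\SobbH$. Invariance on the conforming part $\MRS{}\cap\SobbH$ follows because on such functions $\AvgOper{\hct}$ already reproduces the $\HCT$ degrees of freedom—values and gradients at vertices and normal-derivative means at edge midpoints agree across elements—so $\AvgOper{\hct}\sigma$ interpolates $\sigma$ and, $\sigma$ being $C^1$ and piecewise quadratic, coincides with it; then the bubble correction acts on $\sigma-\AvgOper{\hct}\sigma=0$.

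Next I would check the right-inverse property via Lemma~\ref{L:MR-Rinv}. For the vertex values, $\AvgOper{\hct}\sigma(z)=\sigma(z)$ by the very definition~\eqref{HCT-smoother} of $\AvgOper{\hct}$, and the bubble functions $\bar{\Phi}_{n_F}=\zeta_F\bar{\phi}_F$ vanish at all vertices (since $\phi_F$ does), so $E_{\mrs}\sigma(z)=\sigma(z)$ at every $z\in\LagNodMint{1}$. For the normal-derivative means over interior edges, I would argue exactly as in~\eqref{CR-E-Rinv}: using the reproduction property~\eqref{MR-bubble-smoother:partial_n} of $\BubbOper[\partial_n]$ and the fact that $\AvgOper{\hct}\sigma\in\HCT\subseteq\SobbH$ has $\int_F\nabla(\AvgOper{\hct}\sigma-\BubbOper[\partial_n]\AvgOper{\hct}\sigma)\cdot n_F=0$ (again by~\eqref{MR-bubble-smoother:partial_n} applied to $\AvgOper{\hct}\sigma$), one gets $\int_F\nabla E_{\mrs}\sigma\cdot n_F=\int_F\nabla\BubbOper[\partial_n]\sigma\cdot n_F=\int_F\nabla\sigma\cdot n_F$. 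Hence both conditions of~\eqref{MR-Rinv} hold and $\Ritz_{\mrs}E_{\mrs}=\id_{\MRS{}}$.

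The core of the proof is the stability bound $\opnorm{E_{\mrs}}{\MRS{}}{\SobbH}\leq C_{\Shape_\Mesh}$. I would split
\begin{equation*}
 \norm{\Hess E_{\mrs}\sigma}_{\Leb{\Domain}}
 \leq
 \norm{\HessM\sigma}_{\Leb{\Domain}}
 + \norm{\HessM(\sigma-\AvgOper{\hct}\sigma)}_{\Leb{\Domain}}
 + \norm{\Hess\BubbOper[\partial_n](\sigma-\AvgOper{\hct}\sigma)}_{\Leb{\Domain}}
\end{equation*}
and bound the last two terms locally on each $K\in\Mesh$. For the averaging term, expanding $\sigma-\AvgOper{\hct}\sigma$ in the $\HCT$ nodal basis, only the vertex-gradient contributions $\partial_j(\sigma_{|K})(z)-\partial_j(\sigma_{|K_z})(z)$ survive the subtraction (values and normal-derivative means at midpoints are reproduced exactly), so $\norm{\Hess(\sigma-\AvgOper{\hct}\sigma)}_{\Leb{K}}$ is controlled by $\sum_{z\in\LagNod{1}(K)}\snorm{\nabla\sigma_{|K}(z)-\nabla\AvgOper{\hct}\sigma(z)}\,\norm{\Hess\Upsilon_z^j}_{\Leb{K}}$; inserting the averaging bound of Lemma~\ref{L:HCT-averaging}, the scaling of Lemma~\ref{L:HCT-scaling}, and~\eqref{adjacent-elements} yields $\Cleq\norm{\HessM\sigma}_{\Leb{\omega_K}}$. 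For the bubble term, I would first note $\BubbOper[\partial_n](\sigma-\AvgOper{\hct}\sigma)_{|K}=\sum_{F\in\Faces{K}}\bigl(\int_F\nabla(\sigma-\AvgOper{\hct}\sigma)\cdot n_F\bigr)\bar{\Phi}_{n_F}$, estimate the coefficient $\snorm{\int_F\nabla(\sigma-\AvgOper{\hct}\sigma)\cdot n_F}$ via Lemma~\ref{L:HCT-averaging} by $\Cleq\snorm{F}\,h_{K'}\snorm{K'}^{-1/2}\norm{D^2\sigma}_{\Leb{K'}}$ summed over $K'\ni$ an endpoint of $F$, then multiply by the scaling $\norm{\Hess\bar{\Phi}_{n_F}}_{\Leb{K}}\leq C\Shape_K^3\Shape_{K'}^2\snorm{K}^{1/2}\rho_K^{-1}\snorm{F}^{-1}$ from Lemma~\ref{L:scaling-Phi_n_F}, so that the $\snorm{F}$ factors cancel and~\eqref{adjacent-elements} again gives $\Cleq\norm{\HessM\sigma}_{\Leb{\omega_K}}$. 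Summing both local bounds over $K\in\Mesh$ and using that each star $\omega_K$ meets at most $C_{\Shape_\Mesh}$ elements finishes the estimate. The main obstacle is bookkeeping the shape-coefficient powers correctly in the bubble term—Lemma~\ref{L:scaling-Phi_n_F} already carries a $\Shape_K^3\Shape_{K'}^2$, so one must be careful that these, together with the factors from~\eqref{adjacent-elements}, still collapse into a single $C_{\Shape_\Mesh}$; but since $d=2$ is fixed and all powers are bounded in terms of $\Shape_\Mesh$, this is mechanical rather than conceptual.
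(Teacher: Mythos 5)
Your proposal is correct and follows essentially the same route as the paper's proof: invariance via the invariance of $\AvgOper{\hct}$ on $\MRS{}\cap\SobbH$, the right-inverse property via Lemma~\ref{L:MR-Rinv} together with \eqref{MR-bubble-smoother:partial_n} as in \eqref{CR-E-Rinv}, and stability by splitting off the averaging error and the bubble correction and combining Lemmas~\ref{L:HCT-scaling}, \ref{L:HCT-averaging} and \ref{L:scaling-Phi_n_F} locally before summing over the mesh. The only differences are presentational, e.g.\ you make explicit that the bubbles vanish at vertices and you bound the coefficient $\int_F\nabla(\sigma-\AvgOper{\hct}\sigma)\cdot n_F$ directly rather than writing out the HCT basis expansion as the paper does.
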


\begin{proof}
The operator $E_{\mrs}$ is invariant on $\MRS{}\cap\SobbH$, because $\AvgOper{\hct}$ is invariant on $\MRS{} \cap HCT = \MRS{} \cap \SobbH $. In order to check that $E_{\mrs}$ is a right inverse of $\Ritz_{\mrs}$, we verify \eqref{MR-Rinv} of Lemma~\ref{L:MR-Rinv} and let $\sigma \in \MRS{}$. First, given a Lagrange node $z\in\LagNodMint{1}$, we have $\AvgOper{\hct}\sigma(z) = \sigma(z)$ and so
$E_{\mrs}\sigma(z) = \sigma(z)$. Second, given an interior edge $F\in\FacesMint$, we derive
$\int_F \nabla E_{\mrs} \sigma \cdot n_F = \int_F \nabla \sigma \cdot n_F$
as in \eqref{CR-E-Rinv} by means of \eqref{MR-bubble-smoother:partial_n}. 

We may finish the proof by bounding $\opnorm{\id_{\MRS{}} - E_{\mrs}}{\MRS{}}{\SobbH}$ appropriately. Let $\sigma 
\in \MRS{}$, fix a mesh element $K\in\Mesh$, and write
\begin{equation*}
%
 \norm{\Hess (E_{\mrs} \sigma - \sigma)}_{\Leb{K}}
 \leq
 \norm{\Hess(\sigma - \AvgOper{\hct} \sigma)}_{\Leb{K}} 
 +
 \norm{\Hess \BubbOper[\partial_n](\sigma - \AvgOper{\hct}\sigma)}_{\Leb{K}}.
\end{equation*}
For the first term on the right-hand side, we combine Lemmas \ref{L:HCT-scaling} and \ref{L:HCT-averaging} as their counterparts in \eqref{CR-E-bound2} and obtain
\begin{equation*}
%
 \norm{\Hess(\sigma - \AvgOper{\hct} \sigma)}_{\Leb{K}}
 \Cleq 
 \norm{\HessM \sigma}_{\Leb{\omega_K}}.
\end{equation*}
For the second term, we observe
\begin{multline*}
 \BubbOper[\partial_n](\sigma - \AvgOper{\hct}\sigma)
 =
\\
 \sum_{F\in\Faces{K}\cap\FacesMint} \,
 \sum_{z \in \LagNod{1}(F)} \, \sum_{j=1}^2
  \big[ \partial_j(\sigma_{|K})(z) - (\partial_j\AvgOper{\hct}\sigma)(z) \big]
  \left( \int_F \nabla \Upsilon_z^j \cdot n_F \right) \bar{\Phi}_{n_F}
\end{multline*}
in $K$. Consequently, Lemmas~\ref{L:HCT-scaling}, \ref{L:HCT-averaging}, and \ref{L:scaling-Phi_n_F} give
\begin{equation*}
 \norm{\Hess \BubbOper[\partial_n](\sigma - \AvgOper{\hct}\sigma)}_{\Leb{K}}
 \Cleq
 \norm{\HessM \sigma}_{\Leb{\omega_K}}
\end{equation*}
and we can finish the proof as for Proposition \ref{P:stable-CR-Rinv}.
\end{proof}

Let $M_{\mrs}$ denote the \emph{new Morley method} for the biharmonic problem \eqref{biharmonic_pb} that corresponds to the setting \eqref{MR-setting} with the smoothing operator $E_{\mrs}$ in Proposition~\ref{P:stable-MR-Rinv}. Then $M_{\mrs}= (\MRS{},\aext_{|\MRS{} \times \MRS{}}, E_{\mrs})$ and its discrete problem is
\begin{equation}
\label{MR-discrete-pb}
U_{\mrs} \in \MRS{}
\quad\text{such that}\quad
\forall \sigma \in \MRS{} \;
\int_{\Omega} \HessM U_{\mrs} : \HessM\sigma
=
\langle f, E_{\mrs} \sigma \rangle.
\end{equation}
The smoother $E_{\mrs}$ is computationally feasible in that 
\begin{itemize}
\item it suffices to know the evaluations $\langle f, \Upsilon_z^j\rangle$ for $z \in \LagNodMint{1}$, $j\in\{0,1,2\}$, and $\langle f, \Upsilon_F\rangle$  for $F\in\FacesMint$, as well as $\langle f, \bar{\Phi}_{n_F}\rangle$ for $F\in\FacesMint$,
\item $E_{\degree}$ is local: if $\omega$ is the support of a Morley basis function, then $\omega$ is a pair or a star of elements and $ \text{supp}\, E\Phi \subset \cup_{K\subset\omega} \omega_K$.
\end{itemize}
The approximation properties of $M_{\mrs}$ are superior to the original Morley method.

\begin{theorem}[Quasi-optimality of $M_{\mrs}$]
\label{T:MR-qopt}
The method $M_{\mrs}$ is a $\norm{\HessM\cdot}$-quasi-optimal nonconforming Galerkin method for the biharmonic problem \eqref{biharmonic_pb} with  quasi-opti\-ma\-li\-ty constant $\leq C_{\Shape_\Mesh}$.
\end{theorem}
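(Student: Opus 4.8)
The plan is to derive the statement as an immediate consequence of Corollary~\ref{C:ME-right-inverse}, in exact analogy with the way Theorems~\ref{T:MCR-qopt} and \ref{T:GL-qopt} were obtained in the Crouzeix--Raviart and higher-order cases. First I would record that the method under consideration is $M_{\mrs} = (\MRS{}, \aext_{|\MRS{}\times\MRS{}}, E_{\mrs})$, so that its discrete bilinear form is the plain restriction to $\MRS{}\times\MRS{}$ of the extended scalar product $\aext$ fixed in \eqref{MR-setting}. This is precisely the structure to which Corollary~\ref{C:ME-right-inverse} applies, provided the smoothing operator is a right inverse of the $\aext$-orthogonal projection onto the discrete space.

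Next I would invoke Proposition~\ref{P:stable-MR-Rinv}, which supplies exactly the three properties that Corollary~\ref{C:ME-right-inverse} requires of the candidate $E^\vartriangle$: the operator $E_{\mrs}$ maps $\MRS{}$ into $\SobbH = V$, hence genuinely provides $V$-smoothing; it is a right inverse of the Morley projection $\Ritz_{\mrs}$; and it is $\SobbH$-stable with $\opnorm{E_{\mrs}}{\MRS{}}{\SobbH}\leq C_{\Shape_\Mesh}$. Applying Corollary~\ref{C:ME-right-inverse} with $S=\MRS{}$, $V=\SobbH$, $\aext$ from \eqref{MR-setting} and $E^\vartriangle = E_{\mrs}$ then yields that $M_{\mrs}$ is quasi-optimal with respect to the extended energy norm $\norm{\HessM\cdot}_{\Leb{\Domain}}$, with $\Cqopt = \Cstab = \opnorm{E_{\mrs}}{\MRS{}}{\SobbH}\leq C_{\Shape_\Mesh}$. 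Since Proposition~\ref{P:stable-MR-Rinv} also states that $E_{\mrs}$ is invariant on $\MRS{}\cap\SobbH = S\cap V$, the "if and only if" clause of Corollary~\ref{C:ME-right-inverse} additionally identifies $M_{\mrs}$ as a nonconforming Galerkin method in the sense of \eqref{NonConformingGalerkinMethod}, which completes the argument.

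The substantial work has all been discharged earlier, so at the level of Theorem~\ref{T:MR-qopt} I do not expect any real obstacle; the proof is purely a matter of matching hypotheses. The genuine difficulty lay upstream, in Proposition~\ref{P:stable-MR-Rinv} and its preparatory lemmas: the characterization of the right inverses of $\Ritz_{\mrs}$ (Lemma~\ref{L:MR-Rinv}), the construction of a computationally feasible such right inverse by combining simplified HCT nodal averaging $\AvgOper{\hct}$ with the $\SobbH$-bubble smoother $\BubbOper[\partial_n]$, and above all the shape-regular stability bound, which rests on the non-affine HCT scaling estimates of Lemma~\ref{L:HCT-scaling}, the $H^2_0$-averaging bound of Lemma~\ref{L:HCT-averaging}, and the scaling of the Morley face bubble in Lemma~\ref{L:scaling-Phi_n_F}. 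The only point meriting a word of care in the present proof is to make explicit that the extended energy norm for the setting \eqref{MR-setting} is indeed $\norm{\HessM\cdot}_{\Leb{\Domain}}$, as already observed right after \eqref{MR-setting}.
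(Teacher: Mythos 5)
Your proposal is correct and follows exactly the paper's argument: the paper's proof of Theorem~\ref{T:MR-qopt} is precisely to apply Proposition~\ref{P:stable-MR-Rinv} within Corollary~\ref{C:ME-right-inverse}, with all the substantive work discharged upstream as you describe. Your additional remarks on the invariance of $E_{\mrs}$ on $\MRS{}\cap\SobbH$ yielding the nonconforming Galerkin property and on the identification of the extended energy norm are accurate and merely make explicit what the paper leaves implicit.
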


\begin{proof}
Use Proposition~\ref{P:stable-MR-Rinv} in Corollary \ref{C:ME-right-inverse}.
\end{proof}

\begin{remark}[Alternative simplified nodal averaging into rHCT]
\label{R:rHCT}
One obtains a variant of $M_{\mrs}$ by replacing in $E_{\mrs}$ the simplified nodal averaging $A_{\hct}$ from \eqref{HCT-smoother} by
\begin{equation*}
 A_{\rhct}\sigma
 :=
 \sum_{z \in \LagNodMint{1}} \left(
 \sigma(z) \Theta_z^0
 + \sum_{j=1}^2 \partial_{j} (\sigma_{|K_z})(z) \Theta_z^j
 \right),
\end{equation*}
where $\Theta_z^j$, $z \in \LagNodMint{1}$, $j\in\{0,1,2\}$, are the nodal basis functions of the reduced HCT space from Ciarlet \cite{Ciarlet:78-1}. As Lemma \ref{L:HCT-scaling} carries over to the new basis and the reduced HCT space contains $\MRS{}\cap\SobbH$, this modification of $M_{\mrs}$ is also a quasi-optimal nonconforming Galerkin method for \eqref{biharmonic_pb} with quasi-opti\-ma\-li\-ty constant $\leq C_{\Shape_\Mesh}$.
\end{remark}

\subsection*{Acknowledgments}
We wish to thank Christian Kreuzer for having brought reference \cite{Badia.Codina.Gudi.Guzman:14} to our attention as well as Francesca Tantardini and R\"{u}diger Verf\"{u}rth for their contribution to the proof of Proposition~\ref{P:stable-CR-Rinv}.


\end{document}